\documentclass[12pt]{amsart}

\usepackage[usenames,dvipsnames,svgnames,table]{xcolor}

\usepackage[margin=1.2in]{geometry}
\usepackage{amssymb}
\usepackage{amsthm}
\usepackage[shortlabels]{enumitem}
\usepackage{tikz-cd}
\usetikzlibrary{arrows.meta, shapes.geometric, positioning}

\usepackage[normalem]{ulem}
\usepackage{booktabs}

\usepackage{hyperref}
\hypersetup{
    colorlinks=true,
    linkcolor=blue,
    filecolor=magenta,      
    urlcolor=cyan
    }
\usepackage[capitalize, noabbrev]{cleveref}

\numberwithin{equation}{section}

\newtheorem{theorem}[equation]{Theorem}
\newtheorem{question}[equation]{Question}
\newtheorem{conjecture}[equation]{Conjecture}

\newtheorem{lemma}[equation]{Lemma}

\newtheorem{corollary}[equation]{Corollary}
\newtheorem{proposition}[equation]{Proposition}
\newtheorem{problem}[equation]{Problem}

\theoremstyle{remark}
\newtheorem{example}[equation]{Example}
\newtheorem{remark}[equation]{Remark}

\theoremstyle{definition}
\newtheorem{definition}[equation]{Definition}

\DeclareMathOperator{\projection}{pr}

\DeclareMathOperator{\basic}{basic}

\DeclareMathOperator{\ham}{ham}

\DeclareMathOperator{\DuHe}{DH}
\DeclareMathOperator{\dR}{dR}

\def \calO {{\mathcal O}}
\def \t {{\mathfrak{t}}}
\def \h {{\mathfrak{h}}}
\def \k {{\mathfrak{k}}}
\def \R {{\mathbb R}}

\def \Z {{\mathbb Z}}
\def \Q {{\mathbb Q}}

\def \on {\operatorname}

\def \actson {\  \rotatebox[origin=c]{-90}{$\circlearrowright$}\  }

\newcommand{\bigslant}[2]{{\raisebox{.2em}{$#1$}\left/\raisebox{-.2em}{$#2$}\right.}}

\usepackage[
    style=numeric,
    doi=false,
    isbn=false,
    url=false,
    eprint=false,
    sorting=nyt,
    maxbibnames=8,
	backend=biber
]{biblatex}

\title[]{Symplectic torus actions with \\ non-contractible orbits}
\author{Rei Henigman}
\address{School of Mathematical Sciences, Tel Aviv University}
\email{rei.henigman@gmail.com}
\author{Yael Karshon}
\address{School of Mathematical Sciences, Tel Aviv University; and the University of Toronto}
\email{yaelkarshon@tauex.tau.ac.il}

\keywords{Symplectic torus actions, contractible orbits, cylinder-valued
momentum maps}

\subjclass[2020]{Primary 53D20, 53D35; Secondary 57R17, 57S15}

\date{\today}

\begin{document}

\begin{abstract}
We prove that a symplectic $T^{n-1}$ action on a closed connected
$2n$-dimensional symplectic manifold is Hamiltonian if and only if its
orbits are contractible. This generalizes a result of Lalonde--McDuff--Polterovich on
four-manifolds and theorems of McDuff and Kim on existence of fixed
points. 
When the orbits are isotropic, we prove a stronger variant of this result,
which implies
non-extendability of certain symplectic circle actions on 6-manifolds
to symplectic $T^2$ actions.
Moreover, we prove that a symplectic $T^{n-1}$ action with isotropic orbits 
always splits into a maximal Hamiltonian action and a locally-free action. 
We end by posing several open questions on the topology of symplectic torus actions.
\end{abstract}
\maketitle

\setcounter{tocdepth}{1}
{
  \hypersetup{linkcolor=black}
  \tableofcontents
}

\section{Introduction}
\label{sec:intro}

\subsection{Hamiltonian versus non-Hamiltonian} \ 
\label{subsec:ham-vs-nonham}

Let a torus $T$ act symplectically and faithfully on a closed connected symplectic manifold $(M^{2n}, \omega)$. When the action is Hamiltonian, many properties of the $T$-manifold can be read from its momentum map, its fixed point set $M^T$, and its isotropy data. In particular, one can read the fundamental group, the Betti numbers, the volumes of the reduced spaces, and information on the Chern class. Hence, it is natural to seek conditions under which a symplectic torus action must be Hamiltonian.

For a symplectic circle action, the existence of fixed points implies that the action is Hamiltonian under various assumptions:
Kähler manifolds (Frankel~\cite{frankel}), weak Lefschetz property 
(Ono~\cite{ono}), monotone symplectic manifolds 
(Lupton--Oprea~\cite{lupton_oprea}), semi-free actions with finite fixed set $M^T$
(Tolman--Weitsman~\cite{weitsman_tolman}), and 
symplectic four-manifolds (McDuff~\cite[Proposition~2]{mcduff_circle_actions}).
However, this is not true in general: McDuff~\cite{mcduff_circle_actions} constructed a symplectic 
non-Hamiltonian circle action on a six-dimensional manifold whose 
fixed-point set is a nonempty union of 2-tori.
In~\cite[p.~151]{introduction_symplectic_first_edition}, 
McDuff and Salamon asked the following question, often referred to as 
\textit{McDuff's Conjecture}: 
``Does there exist a non-Hamiltonian symplectic circle action with finite
fixed set $M^T$ on a closed, connected symplectic manifold?''. This question 
was resolved by Tolman~\cite{tolman_isolated_points}, who constructed a 
non-Hamiltonian symplectic circle action on a six-dimensional manifold, 
with exactly $32$ fixed points (see also Jang--Tolman~\cite{jang_tolman}).

In this paper, we are interested in a related question:
\emph{contractibility} of the $T$-orbits.
This is closely related to the orbit maps $\rho_x \colon T \to M$,
given by $a \mapsto a \cdot x$ for $x \in M$, 
being null-homotopic. Because $M$ is connected, any two orbit maps are homotopic,
so if there exist fixed points, 
then all the orbit maps are null-homotopic.
Hence, for a Hamiltonian torus action 
on a closed connected symplectic manifold, the orbit maps are null-homotopic.
For Hamiltonian circle actions, a vast generalization, proved using Floer theory,
is that the evaluation at a point $p \in M$ of a loop of Hamiltonian diffeomorphisms 
is null-homotopic (Lalonde--McDuff--Polterovich \cite[Section 1.2]{lalonde_mcduff_polterovich}).

For non-Hamiltonian symplectic torus actions, the situation
is almost the opposite, as we now describe.
The action of $\R^2/\Z^2$ on itself by translations is free,
and its orbit maps are not null-homotopic.
On the Kodaira-Thurston symplectic 4-manifold,
there exists a free circle action whose orbits are null-homologous,
but its orbit maps are not null-homotopic.
In fact, in dimension four, the orbit maps of symplectic non-Hamiltonian
circle actions can never be null-homotopic, by a work of Lalonde-McDuff-Polterovich~\cite{lalonde_mcduff_polterovich}. This extends to dimensions higher than four under various additional assumptions:
weak Lefschetz property (Allday--Oprea~\cite[Proposition 3.4]{allday_oprea}), 
aspherical symplectic manifolds (Lupton--Oprea~\cite{lupton_oprea}),
symplectically aspherical symplectic manifolds 
(Oprea--Walsh~\cite[Cor.~4.10]{oprea_walsh}), 
and spherically monotone symplectic manifolds
(Atallah~\cite{atallah_remarks_on_circle_actions}).
In contrast, the 6-dimensional examples of McDuff~\cite{mcduff_circle_actions}, Tolman~\cite{tolman_isolated_points}, and Jang-Tolman~\cite{jang_tolman} are non-Hamiltonian circle actions with fixed points, hence with null-homotopic orbit maps. See also Kotschick~\cite{kotschick_contractible_orbits} for examples of free symplectic (non-Hamiltonian) circle actions with contractible orbits in every even dimension $\geq 6$ (which resolved a question of McDuff--Salamon \cite[p.~152]{introduction_symplectic_first_edition}).

Taking products of the above 6-dimensional examples with spheres, we obtain symplectic non-Hamiltonian $T^{n-2}$ actions on $(M^{2n}, \omega)$ with null-homotopic orbit maps for every $n$. In contrast, it follows from works of Duistermaat-Pelayo~\cite{duistermaat_pelayo} and Benoist~\cite{benoist} that a symplectic non-Hamiltonian $T^n$ action on $(M^{2n}, \omega)$ cannot have null-homotopic orbit maps (see Appendix~\ref{weak_main_theorem_appendix}). Hence, the only missing case is symplectic $T^{n-1}$ actions, which we resolve in this paper, thereby extending Lalonde-McDuff-Polterovich's result~\cite{lalonde_mcduff_polterovich} from dimension four to any dimension:

\begin{theorem} \label{thm:main}
Let a torus $T$ act symplectically on a closed connected 
symplectic manifold $(M, \omega)$.  
Suppose that $\dim M = 2n$ and $\dim T \geq n-1$.
Then the action is Hamiltonian if and only if its orbit maps are null-homotopic.
\end{theorem}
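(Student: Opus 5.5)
The easy direction has already been noted in the introduction: a Hamiltonian action on a closed connected manifold has fixed points (maxima of a generic component of the momentum map), so all orbit maps are null-homotopic. For the converse, the case $\dim T = n$ is handled by the Duistermaat--Pelayo/Benoist argument of Appendix~\ref{weak_main_theorem_appendix}. The plan therefore concentrates on $\dim T = n-1$, assuming the action is effective; otherwise we pass to the quotient by the kernel, and then the dimension bound is automatic from effectiveness.

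\emph{Step 1: the obstruction class.} For $\xi\in\t$ the class $[\iota_{\xi_M}\omega]\in H^1(M;\R)$ is well defined. The action is Hamiltonian iff the linear map $\t\to H^1(M;\R)$, $\xi\mapsto[\iota_{\xi_M}\omega]$, vanishes; this uses compactness of $T$ and averaging. Let $\k\subseteq\t$ be its kernel. Choose a subtorus $K$ whose Lie algebra is a rational subspace of $\k$; after averaging, $K$ acts in a Hamiltonian way. The standard trick is to reduce to the case where the non-Hamiltonian part is a circle. If $[\iota_{\xi_M}\omega]\neq 0$ for some $\xi$, this class pairs nontrivially with some loop, so by perturbing $\xi$ to a rational direction we get a circle subgroup $S\subseteq T$ whose action is not Hamiltonian. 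Then we get a cylinder-valued momentum map $\mu\colon M\to \R/c\Z$ for $S$, with $c>0$ the period, which is a submersion-free Morse--Bott map onto the circle. Its restriction to the fixed points is non-surjective only if... — more precisely, I would use the generalized momentum map $\tilde\mu\colon\tilde M\to\R$ on the covering space and note that $\mu$ is a circle-valued Morse--Bott function with even indices. Since $\mu$ has no local max or min as a circle-valued map, it is surjective and $M\to S^1$ is a fibration away from critical levels.

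\emph{Step 2: use the complementary torus to produce a nontrivial orbit loop.} Take the full $T$-action and restrict to a regular level $Z=\mu^{-1}(t)$, of dimension $2n-1$, on which $T$ acts with the $T$-orbits inside $Z$. The key geometric fact to establish is the following. The minimal $T$-dimensionality, together with $\dim T=n-1$ and the closedness/$T$-invariance of $\omega$, forces the symplectic reduced space $Z/S$, or the Hamiltonian part, to be small enough, of complexity at most one, so that one can find a $T$-orbit, or a point with a specific stabilizer, whose orbit map $T\to M$ composed with $\mu$ (or with the flow of a complementary vector field) detects winding. Concretely, consider the loop $\gamma$ obtained from the $S$-orbit through $x$. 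Pairing with $[\iota_{\xi_M}\omega]$ gives $\int_\gamma \iota_{\xi_M}\omega=\int_{S}\omega(\xi_M,\xi_M)=0$, so this naive pairing fails. Instead, I would pair the class $[\iota_{\xi_M}\omega]$ with orbits of a \emph{different} generator $\eta\in\t$, giving $\omega(\xi_M,\eta_M)$, which is constant on $M$ by invariance and closedness (a standard lemma: $d(\omega(\xi_M,\eta_M))=0$ for commuting symplectic vector fields). If this constant is nonzero for some $\eta$, the $\eta$-orbit loop has nonzero $\int\iota_{\xi_M}\omega$, hence is not null-homologous, so the orbit maps are not null-homotopic. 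So the remaining case is when $T$-orbits are isotropic, i.e.\ $\omega(\xi_M,\eta_M)\equiv 0$ for all $\xi,\eta$.

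\emph{Step 3 (main obstacle): the isotropic case.} Here I would show that the Hamiltonian subtorus $K$ together with the non-Hamiltonian directions gives a contradiction with $\dim T=n-1$. Reduce by $K$ at a regular value: the reduced space has dimension $2n-2\dim K$ and carries an action of $T/K$, of dimension $n-1-\dim K$, which is symplectic, has isotropic orbits, and is nowhere Hamiltonian. In the extreme case the reduced space is a $T/K$-manifold of complexity one with isotropic, generically free orbits, and the flux homomorphism is injective on $\t/\k$. One then argues, via a Duistermaat--Pelayo type structure theorem, that such a space fibers over a torus and that the orbits of $T/K$ inject in $\pi_1$. For this, use the map $M\to \R^{\dim}/\text{periods}$ given by integrating the closed forms $\iota_{\xi_M}\omega$; this map is equivariant with respect to translations along the flux. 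Lifting back to $M$ then shows that the $T$-orbit maps are nontrivial in $\pi_1$. The delicate point is ensuring that the period lattice is discrete, to get a genuine torus-valued map, and handling non-free strata. I would first try perturbing $\omega$ to a rational $T$-invariant form, as in McDuff's argument, to make the periods rational.
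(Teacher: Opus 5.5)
Your treatment of the non-isotropic case is fine. Choosing a rational $\eta$ with $\omega(\xi_M,\eta_M)\neq 0$ gives an orbit loop on which the closed form $\iota_{\xi_M}\omega$ integrates to $\omega(\xi_M,\eta_M)\neq 0$. That is an equivalent, more hands-on version of the paper's observation that $[\rho_x^*\omega]\neq 0$ in $H^2(T)$.

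The gap is Step 3, where all the content of the theorem lies. In the isotropic case the cylinder-valued momentum map $\mu\colon M\to\t^*/P$ is $T$-invariant, so it sends orbits to points and says nothing about their homotopy classes. ``Fibers over a torus, hence orbits inject in $\pi_1$'' is therefore not an argument. Two facts are missing, and both use $\dim T=n-1$ essentially.

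\emph{First, local freeness.} The non-Hamiltonian part $T_c$ must act locally freely (the Splitting Lemma). You treat non-free strata as a technicality, but ruling them out is a theorem, and it fails in complexity two: McDuff's $6$-dimensional non-Hamiltonian circle action has fixed points. The paper proves it by playing Graham's strict inequality $\partial_\xi f(y_+)<\partial_\xi f(y_-)$ across a wall, valid for complexity-one Duistermaat--Heckman densities, against the periodicity of $f$ along $\xi\in P$. This shows every codimension-one wall is parallel to $\t_c^*$. The local normal form then excludes points with a positive-dimensional $T_c$-stabilizer. Only after this is $\mu_c\colon M\to\t_c^*/P$ a submersion onto a torus, so that $\pi_1(\mu_c^{-1}(y))\to\pi_1(M)$ is injective.

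\emph{Second, vanishing Chern class.} Even a locally free action whose momentum map fibers over a torus can have contractible orbits. Kotschick's free circle actions on $6$-manifolds have circle-valued momentum maps that are fibrations over $S^1$, yet all orbits are contractible. What forces injectivity is that the real Chern class of the $T_c$-action on each level set $\mu_c^{-1}(y)$ vanishes (\cref{chern_class_vanishes_proposition}). The paper argues as follows:
\begin{itemize}
\item The time-one flow of an equivariant vector field $X$ with $d\mu(X)=\xi_j^*\in P$ maps the level set to itself.
\item By the Duistermaat--Heckman variation formula, this map shifts $[i_y^*\omega]_{\basic}$ by $\langle c_1,\xi_j^*\rangle$.
\item Karshon--Tolman's Moser theorem for complexity-one spaces isotopes it to a symplectomorphism, forcing $\langle c_1,\xi_j^*\rangle=0$.
\item A closed connection form then makes every subcircle orbit non-torsion in $H_1$ (\cref{non_free_torus_bundle_lemma}).
\end{itemize}

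Your appeal to a ``Duistermaat--Pelayo type structure theorem'' cannot replace these steps. That classification concerns coisotropic orbits, i.e.\ $\dim T\ge n$ when orbits are isotropic, and nothing comparable exists in complexity one. Reducing first by the Hamiltonian subtorus $K$ also creates new problems: orbifold reduced spaces, and the need to show that a loop contractible in $M$ stays contractible in the reduced space. The paper avoids both by working with $\mu_c$ on $M$ itself.
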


We prove \cref{thm:main} in \cref{proof_of_main_thm_section}.
The main case, when $\dim T = n-1$ and the action is non-Hamiltonian,
is in \cref{complexity_one_proposition}.

When $\dim M = 4$ and $\dim T = 1$, this is precisely
Lalonde-McDuff-Polterovich's theorem~\cite{lalonde_mcduff_polterovich}. Their argument in dimension four
strongly relies on the two-dimensionality of the reduced spaces with respect to a
circle-valued momentum map associated to the locally-free non-Hamiltonian circle action. For $\dim T > 1$, the action might have
both Hamiltonian and non-Hamiltonian directions, so their argument no longer works.
Our argument uses the log-concavity of the Duistermaat-Heckman measure for complexity one spaces to deduce that the ``non-Hamiltonian part'' of the $T$-action acts in a locally-free manner (for $\dim T \ge n-1$). Then, the reduced spaces with respect to a cylinder-valued momentum map associated to the locally-free ``non-Hamiltonian part'' of the $T$-action are complexity-one orbifolds, and we obtain our result by applying Karshon-Tolman's Moser-type argument for complexity one spaces.

An \textbf{orbit} of the $T$-action is \textbf{contractible} if its \emph{inclusion map} is null-homotopic. 
In general, this is stronger than the orbit map being null-homotopic;
see Example~\ref{example_fixed_non_contractible}.
But in the setup of \cref{thm:main}, these conditions become equivalent:
\begin{corollary}\label{corollary_tfae} 
Let a torus $T$ act symplectically on a nonempty
closed connected symplectic manifold $(M, \omega)$.
Suppose that $\dim M = 2n$ and $\dim T \geq n-1$.
Then the following are equivalent.
    \begin{enumerate}[topsep=0pt]
        \item\label{ham_item} The action is Hamiltonian.
        \item\label{fixed_item} There exist fixed points.
        \item The orbit maps are null-homotopic.
        \item Some orbits are contractible.
        \item All orbits are contractible.
        \item The action is equivariantly formal.
    \end{enumerate}
\end{corollary}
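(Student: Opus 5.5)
We prove the equivalences by a cycle of implications, using \cref{thm:main} for the one nontrivial step. Write (1)--(6) for the items in the order listed.

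\textbf{(1)$\Rightarrow$(2).} Let $\Phi\colon M\to\t^*$ be a momentum map. Since $M$ is compact, the function $\langle \Phi,\xi\rangle$ attains its maximum for every $\xi\in\t$. Choose $\xi$ generic, so that the closure of $\exp(\R\xi)$ is all of $T$. At a maximum of $\langle\Phi,\xi\rangle$ the vector field $\xi_M$ vanishes, so the point is fixed by $\exp(\R\xi)$ and hence by its closure $T$.

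\textbf{(2)$\Rightarrow$(5).} The orbit of a fixed point is a single point, hence contractible. Now let $x$ be any point and $p$ a fixed point. Since $M$ is connected, choose a path $\gamma$ from $p$ to $x$. Then
\[
H(a,t)=a\cdot\gamma(t)
\]
is a homotopy from the constant map $T\to M$ at $p$ to the orbit map $\rho_x$. The inclusion of the orbit $T\cdot x\cong T/T_x$ is the map induced by $\rho_x$ on the quotient. Since $H(a,t)$ depends only on the class of $a$ in $T/T_x$ (for $t<1$ the stabilizer of $\gamma(t)$ might differ, so this needs care), we use instead the $T$-equivariant deformation: the inclusion of the orbit is homotopic to a map factoring through $\{p\}$. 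To make this precise, pick a path $\gamma$ that stays inside the closed subset $M^{T_x}$ of points fixed by $T_x$. We argue that $p$ can be chosen in the same connected component of $M^{T_x}$ as $x$: each component of $M^{T_x}$ is a closed symplectic submanifold, invariant under $T$, on which $T/T_x$ acts. This action is Hamiltonian, by restricting the momentum map, which exists once we have (1). Applying (1)$\Rightarrow$(2) to the $T$-action on that component produces a fixed point $p$ in it. Then $H$ descends to $T/T_x\times[0,1]$, and the orbit is contractible. We therefore organise the cycle as (1)$\Rightarrow$(2) and (1)$\Rightarrow$(5), the latter via this component argument.

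\textbf{(5)$\Rightarrow$(4)$\Rightarrow$(3).} The first implication is trivial because $M$ is nonempty. For the second, suppose one orbit $T\cdot y$ is contractible. Then $\rho_y$ factors as $T\to T/T_y\hookrightarrow M$ and is null-homotopic. All orbit maps are homotopic to one another, as noted in the introduction, so every orbit map is null-homotopic.

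\textbf{(3)$\Rightarrow$(1).} This is exactly \cref{thm:main}, which requires $\dim T\ge n-1$. This is the only deep step. This chain already shows that (1)--(5) are equivalent: (1)$\Rightarrow$(5)$\Rightarrow$(4)$\Rightarrow$(3)$\Rightarrow$(1), together with (1)$\Rightarrow$(2). We close the loop through (2) by noting that (2)$\Rightarrow$(3) via the homotopy $H$ above, which needs no descent for orbit maps.

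\textbf{Equivariant formality.} For (1)$\Rightarrow$(6) we invoke the standard Kirwan/Atiyah--Bott result that Hamiltonian torus actions on compact symplectic manifolds are equivariantly formal, using a generic component of the momentum map as a perfect Morse--Bott function. For (6)$\Rightarrow$(2) we use the localization theorem. If $M^T$ were empty, $H_T^*(M)$ would be torsion over $H^*(BT)$. Equivariant formality makes $H_T^*(M)\cong H^*(M)\otimes H^*(BT)$ free and nonzero, since $M$ is nonempty. Hence $M^T\neq\emptyset$.

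The main obstacle is the descent issue in passing from null-homotopic orbit maps to contractible orbits. We handle it by the component-of-$M^{T_x}$ argument, applied in the Hamiltonian case.
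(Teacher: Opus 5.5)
Your proof is correct. Its skeleton matches the paper's diagram: \cref{thm:main} is the only deep input, closed up by a cycle of implications. The real difference is how you obtain ``all orbits contractible'' and where equivariant formality enters.

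The paper argues Hamiltonian $\Rightarrow$ equivariantly formal (Atiyah--Bott, Kirwan) $\Rightarrow$ all orbits contractible. The second step is \cref{hamiltonian_has_contractible}. It invokes the Ginzburg--Guillemin--Karshon result that in a compact equivariantly formal $T$-manifold every component of every $M^H$ meets $M^T$, and then slides a path inside that component exactly as you do.

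You instead produce the fixed point in the component $Y$ of $M^{T_x}$ directly. $Y$ is a compact $T$-invariant symplectic submanifold, the restricted momentum map makes the $T$-action on $Y$ Hamiltonian, and a maximum of a generic component gives a point of $Y\cap M^T$.

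What your route buys:
\begin{itemize}
\item It is more elementary: $(1)\Rightarrow(5)$ needs no equivariant cohomology at all.
\item To attach (6) you only need the weaker localization statement ``formal $\Rightarrow M^T\neq\emptyset$'', which is the case $H=\{1\}$ of the GGK result.
\end{itemize}

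What it costs: your path from (6) to (5) runs through \cref{thm:main}. The paper's implication ``formal $\Rightarrow$ all orbits contractible'' is purely topological. It holds for any compact equivariantly formal $T$-manifold, with no symplectic form and no dimension hypothesis, and that generality is what answers Oprea--Walsh's question for all Hamiltonian torus actions.

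A minor presentational point: your paragraph headed $(2)\Rightarrow(5)$ starts an argument, abandons it, and actually proves $(1)\Rightarrow(5)$. It would read better labelled as $(1)\Rightarrow(5)$ from the start, with $(2)\Rightarrow(3)$ stated separately.
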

\cref{corollary_tfae} follows from the following diagram of implications, which summarize the relations between the different properties:
\begin{center}
    \begin{tikzpicture}[
        double distance=1.2pt,
        line width=0.4pt
    ]
    \node (null) at (-6,0) {\shortstack{null-homotopic\\orbit maps}};
    \node (fixed) at (0,0) {$\exists$ fixed points};
    \node (ham) at (6, 0) {Hamiltonian};
    \node (1contr) at (-6,-3) {\shortstack{some \\ contractible\\orbits}};
    \node (contr) at (0,-3) {\shortstack{contractible\\orbits}};
    \node (formal) at (6,-3) {\shortstack{equivariantly\\formal}};

    \draw[->, double] (fixed) -- node[below]{\S \ref{subsec:ham-vs-nonham}} 
    node[above]{$T \actson M$} (null);
    \draw[->, double] (ham) -- node[below]{\S \ref{subsec:ham-vs-nonham}} 
    node[above]{$T \actson (M,\omega)$} (fixed);
    \draw[->, double] (ham) -- 
    node[right]{$T \actson (M,\omega)$} (formal);
    \draw[->, double] (formal) -- node[below]{\cref{hamiltonian_has_contractible}} node[above]{$T \actson M$} (contr);
    \draw[->, double] (contr) -- node[above]{$T \actson M$} 
    (1contr);
    \draw[<->, double] (1contr) -- node[left]{\cref{lem:contractible}} 
    node[right]{$T \actson M$} (null);
    \draw[->, bend right=20, double] (null) to node[below] {\cref{thm:main}}
    node[above]{$T^{\geq n-1} \actson (M^{2n},\omega)$} (ham);

    \end{tikzpicture}
\end{center}

In particular, \cref{hamiltonian_has_contractible} implies that every orbit of a Hamiltonian torus action is contractible, regardless of the dimension. This resolves a question of Oprea and Walsh~\cite{oprea_walsh}, see \cref{contractible_section} for more information. For symplectic non-Hamiltonian torus actions with  $\dim T < n-1$, it is unknown whether ``all orbits are contractible'' is equivalent to ``null-homotopic orbit maps'', see \cref{fixed_non_contractible_question} and \cref{some_non_contractible_question}.

In the setup of \cref{corollary_tfae},
``$\exists$ fixed points $\implies$ Hamiltonian''
was also proved by Kim~\cite{kim_frankel_complexity_one},
and can also be deduced from \cref{splitting_lemma}
(``Splitting lemma'') and \cref{fixed implies isotropic}.
``Hamiltonian $\implies$ equivariantly formal'' 
is true regardless of the dimension of $T$.  
This is due to Atiyah--Bott and Kirwan
(\cite[Sect. 7]{atiyah_bott}, \cite[Prop. 5.8]{kirwan}).
The converse is also true by Allday--Hauschild--Puppe~\cite{allday_hauschild_puppe};
see also a direct proof in Appendix~\ref{equivariant_formality_appendix}.

\subsection{Actions with isotropic orbits and non-extendability} \ 
\label{subsec:isotropic_extensions_section}

An orbit of the $T$-action is \textbf{isotropic} if $\omega$ vanishes on the orbit.
A central step in the proof of \cref{thm:main} 
is the following stronger result for actions with isotropic orbits.
(If one orbit is isotropic then all orbits are isotropic;
see \cref{cor:sigma_constant}.)

\begin{theorem}\label{complexity_one_torsion_theorem}
Let a torus $T$ act symplectically on a closed connected symplectic manifold
$(M,\omega)$. Suppose that $\dim M = 2n$ and $\dim T \geq n-1$,
and that the orbits are isotropic.
Then every non-Hamiltonian subcircle action of $T$ 
represents a non-torsion element of $\pi_1(M)$.
\end{theorem}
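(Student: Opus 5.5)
We argue by contradiction. Suppose a subcircle $S \subset T$ acts non-Hamiltonianly and its orbit represents a torsion class in $\pi_1(M)$, say $k[\gamma]=0$ with $k\ge 1$. Replacing $S$ by the $k$-fold covering circle action (i.e.\ precomposing with $z\mapsto z^k$) does not change whether the action is Hamiltonian. So we may assume the $S$-orbits are contractible.

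The plan follows the strategy sketched after \cref{thm:main}.

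\textbf{Step 1: splitting.} Since the orbits are isotropic, the $T$-action has a cylinder-valued momentum map. Write $\t=\h\oplus\k$, where $\h$ is the maximal subspace whose action is Hamiltonian, with $H=\exp\h$ a subtorus. Using the log-concavity of the Duistermaat--Heckman measure for the complexity-one Hamiltonian $H$-action, I expect to show that a complementary subtorus $K$ acts locally freely. This is the ``maximal Hamiltonian plus locally free'' splitting announced in the abstract, which I would establish as a lemma first.

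\textbf{Step 2: reduce to $K$.} Since $S$ is non-Hamiltonian, its projection to $\t/\h$ is nonzero. Homotoping within $T$, the $S$-orbit is homotopic to an $H$-loop times a $K$-loop. The $H$-part is null-homotopic because $H$ is Hamiltonian and so has fixed points, and any two orbit maps are homotopic. Hence some nontrivial loop $\gamma_K$ in a $K$-orbit is contractible in $M$.

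\textbf{Step 3: the fibration.} The cylinder-valued momentum map for $K$, with $\omega$ vanishing on orbits, gives a map
\[
\Phi_K \colon M \to \k^*/\Lambda \cong T^{\dim K}.
\]
Here $\Lambda$ is the group of periods, and I expect the locally free, isotropic setting to make it a lattice. The map $\Phi_K$ is a submersion, since $K$ acts locally freely and its orbits are isotropic. It is therefore a locally trivial fibration over a torus.

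The key homological input is the pairing $\langle \Phi_K^*[dx_i], \gamma_K\rangle$. It is computed as $\int_{\gamma_K}\iota_\xi\omega$-type periods, which equal $\omega(\xi,\xi')\cdot(\text{length})$ plus contributions coming from the period lattice. More directly: pull back the angular forms $d\theta_i$ on $\k^*/\Lambda$. Then $\Phi_K^*d\theta_i$ evaluated on the generating vector field of $\eta\in\k$ equals $\omega(\xi_i,\eta)$-type terms, which must be arranged to be nondegenerate. This is the main obstacle. Isotropy of orbits makes $\omega(\xi,\eta)=0$ for $\xi,\eta\in\t$, so the integral of $\Phi_K^*d\theta$ along $K$-orbits vanishes, and homology alone cannot detect $\gamma_K$. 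This is consistent with the Kodaira--Thurston example, where orbits are null-homologous.

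So instead I would use the homotopy long exact sequence of the fibration $F\to M\to \k^*/\Lambda$ together with the reduced spaces. The fiber $F=\Phi_K^{-1}(pt)$ carries a locally free $K$-action, with quotient $F/K$ a closed complexity-one Hamiltonian $H$-orbifold. A contractible $\gamma_K$ in $M$ maps to the trivial element of the base, which is automatic, so we must show $\gamma_K$ is nontrivial of infinite order in $\pi_1(F)$ and that it survives to $\pi_1(M)$.

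For survival, I would use the Karshon--Tolman Moser argument. It shows the reduced spaces are all equivariantly symplectomorphic, after adjusting cohomology, so $M$ is a mapping torus type bundle $F\times_{\Z^k}\R^k$ whose monodromy commutes with the $K$-action. The key is that $\pi_2$ of the base torus vanishes, so $\pi_1(F)\to\pi_1(M)$ is injective. Thus it suffices to show $\gamma_K$ has infinite order in $\pi_1(F)$.

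In $F\to F/K$, an orbifold principal $K$-bundle, the class of $\gamma_K$ is torsion only if the Euler/Chern class relation forces it. Torsion of a fiber loop would make the rational Chern class $c\in H^2(F/K;\Q)\otimes\k$ have a component pairing in a way that is incompatible with the symplectic form. Here is the heart of the argument: the varying reduced form $[\omega_t]=[\omega_0]+t\,c$ along the base directions should produce a Duistermaat--Heckman contradiction (non-constancy of volume versus periodicity around the base circle). I would try to show this first: if $\gamma_K$ were torsion, then $c$ would have a nonzero component along $\gamma_K$'s direction, the reduced volumes would vary polynomially in $t$, yet periodicity of $\Phi_K$ over the compact base forces them to be constant. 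This gives the contradiction.
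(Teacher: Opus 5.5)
\textbf{The gap is in the final step, which you yourself call the heart of the argument.} Periodicity of the reduced volumes cannot produce the contradiction you want. The Duistermaat--Heckman formula gives $[\omega_{y+t\eta}]=[\omega_y]+t\langle c_1,\eta\rangle$ on $F/K$. Periodicity therefore forces the polynomial $t\mapsto\int_{F/K}([\omega_y]+t\langle c_1,\eta\rangle)^{d}$, with $d=\dim H+1$, to be constant. That only says $\int_{F/K}[\omega_y]^{d-j}\langle c_1,\eta\rangle^{j}=0$ for $j\ge 1$.

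Since $\dim F/K=2\dim H+2$, this forces $\langle c_1,\eta\rangle=0$ when $H$ is trivial. That is the two-dimensional-reduced-space situation of Lalonde--McDuff--Polterovich. It does not force it in general. These numerical constraints are exactly what \cref{cor:coh_constraints} extracts from periodicity, and in Kotschick's examples they hold with $c_1\neq 0$.

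What you need is vanishing of $c_1$ as a basic class. The missing idea is where Karshon--Tolman enters. It is not for ``survival'': that only needs $\pi_2$ of the torus base to vanish, as you note. It is used to kill $c_1$:
\begin{itemize}
\item By the splitting lemma, every stabilizer algebra lies in $\t_{\ham}$. So in each local model $\t_{\ham}^0\subset\h^0$, and one can glue an equivariant vector field $X_j$ with $d\mu(X_j)=\xi_j^*$ for each lattice generator $\xi_j^*$ of $P$.
\item Its time-one flow $\Psi$ preserves $\mu$. Hence it is a $\mu$-$T$-diffeomorphism of the complexity-one space $\mu^{-1}(W)$, and by \cref{complexity_one_isotopy} it is isotopic through such maps to a symplectic isomorphism.
\item So $\Psi^*[\omega_y]=[\omega_y]$ exactly. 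Duistermaat--Heckman gives $\Psi^*[\omega_y]=[\omega_y]+\langle c_1,\xi_j^*\rangle$, so $c_1=0$ (\cref{chern_class_vanishes_proposition}).
\item With $c_1=0$, a connection form can be corrected by a basic form to a closed one, which integrates to $v_K\neq 0$ over the $K$-loop. Thus the loop and its iterates are non-torsion already in $H_1(F)$ (\cref{non_free_torus_bundle_lemma}).
\end{itemize}
So your homological digression was aimed at the wrong space: $H_1(M)$ cannot see $\gamma_K$, but $H_1$ of the fiber can.

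\textbf{There are also two smaller gaps earlier.}

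First, isotropy makes a cylinder-valued momentum map $T$-invariant, but it does not make $P$ discrete, and neither does local freeness. On $T^4$ with $\omega=dx_1\wedge dy_1+a\,dx_1\wedge dy_2+dx_2\wedge dy_2$ and $a\notin\Q$, the circle rotating $x_1$ acts freely (with $\dim T=n-1$), yet $P=\Z+a\Z$ is dense. You must first replace $\omega$ by an integral invariant form that keeps the orbits isotropic and $S$ non-Hamiltonian (\cref{perturbation_lemma}). This is harmless, since torsion in $\pi_1(M)$ is topological.

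Second, in Step~1 the relevant log-concavity is that of the full $T$-action, which is locally complexity one via the cylinder-valued momentum map. It is not that of the $H$-action, whose complexity is $n-\dim H$. The mechanism must also be supplied. Graham's strict jump $\partial_\xi f(y_+)<\partial_\xi f(y_-)$ across a wall not parallel to $\xi\in P$ is incompatible with returning to $y$ after translating by $\xi$. Hence every wall contains $\t_{\ham}^0$, and the local normal form then rules out $K$-stabilizers of positive dimension. Closedness of $H$ is Benoist's theorem.

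Your Step~2 reduction to a $K$-loop is fine. The paper avoids it by choosing the complement $T_c$ to contain $S$.
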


We prove \cref{complexity_one_torsion_theorem} 
in \cref{proof_of_main_thm_section}.
When $M$ is symplectically aspherical, the result of 
\cref{complexity_one_torsion_theorem} also follows
from Lupton and Oprea's work~\cite[p.264]{lupton_oprea}. 
When $\dim T > n-1$, we expect that the ``isotropic orbits'' assumption 
in \cref{complexity_one_torsion_theorem} can be relaxed;
see \cref{non_isotropic_conjecture}.

\cref{complexity_one_torsion_theorem} allows us to prove 
the non-extendability of certain symplectic circle and torus actions:
\begin{corollary} \label{extension_corollary} 
In dimension $6$,
a non-Hamiltonian symplectic circle action with contractible orbits
(e.g. the free action constructed by Kotschick~\cite{kotschick_contractible_orbits},
or the actions with fixed points constructed by McDuff~\cite{mcduff_circle_actions}, Tolman~\cite{tolman_isolated_points},
and Jang--Tolman~\cite{jang_tolman}) cannot be extended to any symplectic
$T^2$ action.

In dimension $2n$, 
a non-Hamiltonian symplectic circle action with contractible orbits
cannot be extended to a symplectic $T^{n-1}$ action with isotropic orbits.
\end{corollary}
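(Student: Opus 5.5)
We derive \cref{extension_corollary} from \cref{complexity_one_torsion_theorem}, with the dimension-$6$ case handled by a separate argument for non-isotropic orbits.

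\emph{Second statement (dimension $2n$).} Let $S^1$ act symplectically and non-Hamiltonianly on $(M^{2n},\omega)$ with contractible orbits. Suppose, for contradiction, that it extends to a symplectic $T^{n-1}$ action with isotropic orbits, so that $S^1 \subset T = T^{n-1}$ is a subcircle. Since $\dim T = n-1$ and the orbits are isotropic, \cref{complexity_one_torsion_theorem} applies: every non-Hamiltonian subcircle of $T$ represents a non-torsion element of $\pi_1(M)$. Our circle is non-Hamiltonian by assumption. Its orbits are contractible, so the class it represents in $\pi_1(M)$ is trivial, and in particular torsion. This is a contradiction.

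\emph{First statement (dimension $6$, so $n=3$ and $\dim T = 2 = n-1$).} Suppose the circle action extends to a symplectic $T^2$ action. If the $T^2$-orbits are isotropic, the previous paragraph gives the contradiction. So the main work is the case of non-isotropic orbits. The invariant $\omega(\xi_X,\xi_Y)$ for a basis $X,Y$ of $\mathfrak t$ is constant on $M$ (\cref{cor:sigma_constant}). If this constant is nonzero, then the $T^2$-action is locally free everywhere, so there are no $T^2$-fixed points.

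We then need to show that this situation forces the original circle's orbits to be non-contractible, or forces the circle action to be Hamiltonian. The idea is that the $2$-dimensional orbits are symplectic tori. If the circle's orbit loops were contractible, the orbit maps $T^2\to M$ would factor, up to homotopy, through a circle (the quotient by our subcircle). This would make the pullback of $[\omega]$ to $T^2$ zero. But the orbits are symplectic, so $\int_{T^2}\rho_x^*\omega \neq 0$, which is a contradiction.

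The main obstacle is this last step: contractibility of the $S^1$-orbit inside the torus orbit must yield a nullhomotopy of the restriction of $\rho_x$ to the $S^1$ factor, and hence factorization through $T^2/S^1$. Here I would use that $\rho_x$ restricted to $S^1\times\{1\}$ is the circle's orbit map, which is null-homotopic, together with the fact that $\rho_x$ is equivariant for $T^2$. Translating the contraction around the complementary circle then makes the map $T^2\to M$ homotopic to one that is constant on $S^1$-cosets. If this fails, I would fall back on the paper's cylinder-valued momentum map setup to get the same conclusion.
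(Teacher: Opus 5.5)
Your proposal is correct and follows essentially the same route as the paper. The $2n$-dimensional statement is immediate from \cref{complexity_one_torsion_theorem}. In dimension $6$, the non-isotropic case is ruled out by exactly the homotopy $(a',a)\mapsto a'\cdot\gamma_t(a)$ on $T = C'\times C$ that you sketch: it deforms $\rho_x$ to a map factoring through $C'$, which forces $\rho_x^*\omega = 0$. The step you flag as the main obstacle therefore goes through as written, and no fallback is needed.
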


We prove \cref{extension_corollary} 
in \cref{proof_of_main_thm_section}.
As far as we know, \cref{extension_corollary} is the first
non-trivial result of non-extendability of symplectic torus actions
in these dimensions. 
For extending a Hamiltonian $T^{n-1}$ action to a Hamiltonian 
$T^{n}$ action on $M^{2n}$,
see Karshon~\cite[Prop. 5.21]{periodic_hamiltonians} for dimension $2n=4$,
and the upcoming Liu--Palmer--Tolman~\cite{extending_tall_complexity_one}
and Liu~\cite{extending_short_complexity_one} for arbitrary dimensions.
Combining results of Henigman \cite{classification_non_ham_s1}
and Pelayo \cite{pelayo-MemoirsAMS}
should yield extendability criteria for non-Hamiltonian symplectic 
circle actions to $T^2$ actions in dimension four;
see \cite[Remark 2.19]{classification_non_ham_s1}.

An important ingredient in the proof of \cref{thm:main}
is that for a symplectic $T^{n-1}$ action with isotropic orbits,
every \textit{purely non-Hamiltonian} subtorus action (see \cref{def:purely_non_ham}) acts in a
locally-free fashion (\cref{infinitesimal_splitting_lemma}).
Combining this with a result of Benoist~\cite[Corollary 3.2]{benoist},
we obtain the following
result, which has independent interest.

\begin{lemma}[Splitting Lemma] \label{splitting_lemma}
Let a torus $T$ act symplectically on a closed connected symplectic manifold
$(M,\omega)$. Suppose that $\dim M = 2n$ and $\dim T \geq n-1$,
and that the orbits are isotropic.
Then $T$ decomposes into subtori, $T = T_{\ham} \times T_c$,
such that the action of $T_{\ham}$ is Hamiltonian and the action of $T_c$
is locally-free.

Moreover, every one-parameter subgroup of $T$ whose action is Hamiltonian
is contained in $T_{\ham}$, and the action of any closed subgroup of $T$
whose intersection with $T_{\ham}$ is finite is locally-free. 
\end{lemma}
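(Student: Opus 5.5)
The plan is to combine the infinitesimal splitting result (\cref{infinitesimal_splitting_lemma}), which says that for a symplectic $T^{n-1}$ action with isotropic orbits every purely non-Hamiltonian subtorus acts locally freely, with Benoist's result \cite[Corollary 3.2]{benoist}.

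First I would define the Hamiltonian part of the Lie algebra. Let $\t$ be the Lie algebra of $T$ and let
\[
\h := \{\xi \in \t : \iota_{\xi_M}\omega \text{ is exact}\}.
\]
This is a linear subspace, namely the kernel of the linear map $\t \to H^1(M;\R)$, $\xi \mapsto [\iota_{\xi_M}\omega]$. Because the orbits are isotropic, $\omega(\xi_M,\eta_M)=0$, so the Hamiltonian functions for elements of $\h$ Poisson commute. The first thing to establish is that $\h$ is a \emph{rational} subspace, i.e.\ the Lie algebra of a subtorus $T_{\ham}$. This is where Benoist's Corollary 3.2 enters: the Hamiltonian elements of $\t$ form the Lie algebra of a closed subgroup, so the closure of $\exp \h$ is a subtorus whose Lie algebra is still $\h$. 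Alternatively, one can argue via the period map $\t \to H^1(M;\R)$, which lands in a lattice image up to scaling by $[\omega]$ on integral classes.

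With $T_{\ham}$ in hand, its action is Hamiltonian: pick a basis of $\h$, pick Hamiltonians, and average them over $T$ to obtain a $T_{\ham}$-invariant, and hence equivariant, momentum map. Next I would choose a complementary subtorus $T_c$ with $T = T_{\ham}\times T_c$; a subtorus of a torus always admits a complementary subtorus. The Lie algebra $\k$ of $T_c$ then satisfies $\k\cap\h=0$, so no nonzero element of $\k$ is Hamiltonian, and $T_c$ is purely non-Hamiltonian in the sense of \cref{def:purely_non_ham}. By \cref{infinitesimal_splitting_lemma}, $T_c$ acts locally freely.

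For the ``moreover'' part, a one-parameter subgroup acting in a Hamiltonian way has generator in $\h$, so it lies in $T_{\ham}$ by construction. Now let $K$ be a closed subgroup with $K\cap T_{\ham}$ finite. Then $\on{Lie}(K)\cap\h=0$, so the identity component $K_0$ is a purely non-Hamiltonian subtorus. By \cref{infinitesimal_splitting_lemma}, $K_0$ acts locally freely, and since $K/K_0$ is finite, $K$ acts with finite stabilizers as well.

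The step I expect to be the main obstacle is the rationality of $\h$. Everything else is formal once \cref{infinitesimal_splitting_lemma} is available, and this rationality is precisely the content to be imported from Benoist.
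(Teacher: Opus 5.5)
Your proposal is correct and is essentially the paper's proof. Benoist's Corollary~3.2 (\cref{maximal_hamiltonian_lemma}) makes $\exp(\t_{\ham})$ a closed subtorus $T_{\ham}$, and any complementary subtorus, or more generally any closed subgroup meeting $T_{\ham}$ in a finite set, has Lie algebra meeting $\t_{\ham}$ trivially, so \cref{infinitesimal_splitting_lemma} makes its action locally-free. One caveat: your parenthetical alternative via the period map only gives rationality of $\t_{\ham}$ when $[\omega]$ is rational (Ginzburg's earlier case), so Benoist's result is genuinely needed in general, as you anticipated.
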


We prove \cref{splitting_lemma} in Section~\ref{splitting_lemma_section}.
When $\dim T = n$ (and the orbits are Lagrangian), 
the result of \cref{splitting_lemma} follows from a result of Benoist
for coisotropic orbits
(see ~\cite[Lemma 6.7]{benoist} or~\cite[Lemma 3.6 and Corollary 3.11]{duistermaat_pelayo}).
When the symplectic manifold satisfies the weak Lefschetz property,
the result of \cref{splitting_lemma} is true
regardless of the dimension of the torus, by a result of Ginzburg
(see~\cite[Proposition 2.2 and Proposition 4.2]{ginzburg_symplectic_actions}).
%

\begin{remark} \label{rk:cannot_decrease}
The dimension of the torus $T$ in the statements of
\cref{thm:main}, \cref{corollary_tfae}, \cref{complexity_one_torsion_theorem}, 
and \cref{splitting_lemma}
cannot be decreased to below $n-1$: as mentioned above, 
McDuff \cite{mcduff_circle_actions}
constructed a non-Hamiltonian symplectic circle action 
with fixed points on a 6-manifold. 
\end{remark}

\begin{remark}\label{remark_relation_lemma_classification}
\cref{splitting_lemma} restricts the possible local models 
that can occur in a non-Hamiltonian symplectic $T^{n-1}$ action (see also \cref{chern_class_vanishes_proposition} for another restriction). 
This should be an important ingredient for proving an equivariant 
classification theorem for non-Hamiltonian symplectic $T^{n-1}$ actions,
generalizing the four-dimensional equivariant classification 
in~\cite{classification_non_ham_s1} in the spirit of Karshon and Tolman's 
classification of tall complexity one 
spaces~\cite{centered_hamiltonians, tall_uniqueness, tall_existence}.
See also \cite[Remark 2.17]{classification_non_ham_s1}.
\end{remark}

\subsection{Free symplectic circle actions} \ 
\label{subsec:free}

Next, we discuss free symplectic circle actions with contractible orbits. 
Let $(M, \omega)$ be a closed connected symplectic manifold, 
equipped with a free symplectic circle action.
Following McDuff~\cite{mcduff_circle_actions}, if the de Rham cohomology
class $[\omega]$ is integral, then the circle action is generated by
a circle-valued momentum map $\mu \colon M \rightarrow \mathbb{R}/\mathbb{Z}$.
Currently, the only known examples of free symplectic circle actions with
contractible orbits are given by Kotschick \cite{kotschick_contractible_orbits} 
and Tolman \cite{tolman_isolated_points}, and their reduced spaces
with respect to a circle-valued momentum map are symplectomorphic 
to K3 surfaces.
It would be interesting to find new examples with different reduced spaces; see \cref{reduced_spaces_open_question} and \cref{reduced_spaces_quantitative_open_question}.

In \cref{proof_of_second_thm_section}, using long exact sequences of homotopy 
groups together with a result of 
Fern\'andez--Gray--Morgan~\cite{fernandez_gray_morgan}, we give a topological
characterization of those symplectic manifolds that can appear 
as reduced spaces of free symplectic circle actions with contractible orbits
(\cref{circle_action_characterization}). This characterization is useful for
understanding where \textit{not} to look for examples of free symplectic 
circle actions with contractible orbits (\cref{cor:coh_constraints}). 
In particular, we deduce that aspherical symplectic manifolds, and 
symplectic manifolds with second Betti number equal to one, cannot appear as
reduced spaces of free symplectic circle actions with contractible orbits 
(\cref{e:contractible_examples}). See \cref{reduced_spaces_quantitative_open_question} for a related question on the possible second Betti numbers of reduced spaces of free symplectic circle action with contractible orbits.

\subsection{Outline of the paper} \
In \cref{preliminaries_section}, we give background on real Chern classes, 
Hamiltonian actions, and non-Hamiltonian symplectic actions. 
In \cref{contractible_section}, we discuss the relation between 
``contractible orbits'' and ``null-homotopic orbit maps''.
In particular, (without a symplectic form,) 
we construct a circle action with fixed points
with some non-contractible orbits
(Example~\ref{example_fixed_non_contractible}). 
Moreover, we show that 
for a locally-free action with vanishing real Chern class, 
the orbits are non-contractible (\cref{non_free_torus_bundle_lemma}).
Furthermore, we prove that 
for Hamiltonian torus actions, all orbits are contractible 
(\cref{hamiltonian_has_contractible}).
Additionally, we prove that symplectic torus actions 
with non-isotropic orbits have non-contractible orbits
(\cref{non_isotropic_non_contractible_lemma}).
In \cref{splitting_lemma_section}, we prove the Splitting 
lemma (\cref{infinitesimal_splitting_lemma}). 
In \cref{vanishing_chern_class_section}, we prove that for every 
complementary torus to the maximal Hamiltonian torus,
the real Chern class of every level set of its momentum map vanishes 
(\cref{chern_class_vanishes_proposition}).
In \cref{proof_of_main_thm_section}, 
we prove \cref{thm:main}, \cref{complexity_one_torsion_theorem}, and \cref{extension_corollary},
using the Splitting lemma 
(\cref{infinitesimal_splitting_lemma}) and 
\cref{chern_class_vanishes_proposition}.
In \cref{proof_of_second_thm_section}, we give a topological
characterization for which spaces can appear as reduced spaces
of free symplectic circle actions with contractible orbits (see \cref{circle_action_characterization}).
In \cref{open_questions_section}, we pose several open questions related to the
topology of symplectic torus actions.
In Appendix~\ref{equivariant_formality_appendix}, we prove that
a symplectic torus action is Hamiltonian if and only if
it is equivariantly formal (\cref{cor:ham_iff_eqformal}).
In Appendix~\ref{weak_main_theorem_appendix}, 
we prove a weak version of \cref{thm:main}, 
which assumes that $\dim T > n-1$. 
The proof is based on Duistermaat and Pelayo's classification of symplectic
torus actions with coisotropic orbits~\cite{duistermaat_pelayo}. 
In Appendix~\ref{seifert_lemma_appendix}, we give a direct proof of
\cref{thm:main} in dimension four, 
using the theory of Seifert fibrations.
In Appendix~\ref{perturbation_appendix}, we describe how to perturb an
invariant symplectic form to be integral, without hurting certain
properties of the action with respect to the symplectic form.

\subsection*{Acknowledgements}
We would like to thank Tara Holm, Leonid Polterovich, Daniele Sepe, Susan Tolman, and Yoav Zimhony for
useful discussions related to this project.

Some of the discussions that contributed to this work took place in the
workshop ``Workshop on Hamiltonian Geometry and Quantization'' at the
Fields Institute for Research in Mathematical Sciences in July 2024. We
wish to thank the Fields Institute and the organizers of this event.

This research is partially funded by the United States -- Israel
Binational Science Foundation (NSF-BSF Grant 2021730), and by the
Natural Sciences and Engineering Research Council of Canada (RGPIN-2024-05798).
RH is partially supported by Leonid Polterovich's ISF-NSFC
Joint Research Program (Grant/Award Number 3231/23).

\section{Preliminaries}\label{preliminaries_section}

In this section, we collect definitions and results 
that we will need throughout the paper.

\subsection{The real Chern class}

In this subsection we define the real Chern class of a $T$-manifold $M$
when the action is locally-free, i.e., the stabilizers are discrete.
Such an $M$ will appear later in the paper as a regular level set 
of a momentum map. 
When the action is free, $M/T$ is a manifold and $M$ is a principal $T$-bundle,
and the real Chern class takes values in $H^2(M/T;\t)$,
where $\t$ is the Lie algebra of $T$.
When the action is locally-free, $M/T$ is an orbifold and $M$ is a principal
orbi-bundle over $M/T$; see \cref{rk:orbifold}.
However, for our purposes, we do not need the formalism of orbifolds.
Instead, we view the real Chern class as a class in the basic cohomology
$H^2_{\basic}(M;\t)$.

\begin{remark} \label{rk:orbifold}
For early works on orbifolds, we refer the reader to
Satake~\cite{satake_1, satake_2}, Thurston~\cite{thurston_book}, 
and Haefliger~\cite{haefliger_orbifolds}.
More modern approaches include groupoids~\cite{adem_leida_ruan} 
and stacks~\cite{lerman_stacks}. See also~\cite{karshon_orbifolds}
for a diffeological approach, with a comparison to Satake's and
Haefliger's definitions. For a comprehensive introduction to orbifolds,
see~\cite{adem_leida_ruan, orbifold_ham_s1, lerman_tolman} and references therein.
\end{remark}

Let a torus $T$, with Lie algebra $\t$, act smoothly on a closed connected 
manifold $M$, and assume that the action is locally-free. 
A differential form
$\omega \in \Omega^*(M)$ is \textbf{basic} if it is invariant
and horizontal, i.e., if 
$$ \mathcal{L}_{\xi_M}\omega = 0 \quad \text{ and } \quad 
\iota_{\xi_M} \omega = 0 $$ 
for every $\xi \in \t$,
where $\xi_M$ is the vector field on $M$ corresponding to $\xi$. 
The cochain of basic forms 
$$\cdots \ \overset{d}{\longrightarrow}\ 
\Omega_{\basic}^k(M) \ \overset{d}{\longrightarrow}\  \Omega_{\basic}^{k+1}(M)
\ \overset{d}{\longrightarrow} \ \cdots,$$ 
with the usual exterior derivative $d$,
defines the basic cohomology $H^*_{\basic}(M)$ of the $T$-manifold.
By Koszul~\cite{koszul}, the basic cohomology group $H^k_{\basic}(M)$
is isomorphic to the singular cohomology group $H^k(M/T; \mathbb{R})$
of the quotient.

A \textbf{connection one-form} for the $T$-action is a $\t$-valued one-form 
$\alpha$ on $M$ that satisfies
\begin{equation*}
    \alpha(\xi_M) = \xi \quad \text{ and } \quad
 \mathcal{L}_{\xi_M} \alpha = 0
\end{equation*}
for every $\xi \in \t$. Such a one-form exists, its
exterior derivative $d\alpha$ is a basic form, 
and the basic cohomology class
$[d\alpha]_{\basic}$ is independent of the choice of $\alpha$. 
We use this class to define the real Chern class of the $T$-action.

\begin{definition}\label{real_chern_class_definition}
Let $M$ be a locally-free $T$ manifold.
Its \textbf{real Chern class} 
is the basic cohomology class $c_1(M) := [d\alpha]_{\basic}$ 
in $H_{\basic}^2(M;\t)$, for any connection one-form $\alpha$.
\end{definition}

The real Chern class appears naturally when applying the Duistermaat-Heckman 
theorem, as it determines the variation of the Duistermaat-Heckman function 
for Hamiltonian $T$-actions; see the next subsection.

\subsection{Hamiltonian actions}
\label{subsec:ham actions}

In this subsection, we give background on Hamiltonian torus actions. 
For more details, see for example~\cite{audin_book, ana_cannas_notes, ggk_book, gs_book}.

Let a torus $T$ 
with Lie algebra 
$\t$ act symplectically and faithfully on a connected symplectic manifold 
$(M, \omega)$. 
The action is 
\textbf{Hamiltonian} if it admits a \textbf{momentum map}, that is, 
a $T$-invariant map $\mu \colon M \rightarrow \t^*$ that satisfies
\begin{equation}
    \iota_{\xi_M} \omega = -d \langle\mu, \xi\rangle
\end{equation}
for every $\xi$ in the Lie algebra $\t$, 
where $\xi_M$ is the corresponding vector field on $M$,
and $\langle\cdot, \cdot\rangle$ is the natural pairing 
between $\t$ and $\t^*$.
The \textbf{complexity} of the action is $k := \frac{1}{2}\dim(M) - \dim T$. 

When $M$ is compact and $k=0$, we obtain \textbf{symplectic toric
orbifolds}, which were classified (up to equivariant symplectomorphism)
by Delzant~\cite{delzant}.
When $M$ is compact and four dimensional and $k=1$,
these spaces were classified by Karshon~\cite{periodic_hamiltonians}.
When $k=1$ and the reduced spaces are two-dimensional, 
these spaces were classified by Karshon and 
Tolman~\cite{centered_hamiltonians, tall_uniqueness, tall_existence}.
Compactness of $M$ can be replaced by the assumption
that there exists a convex open subset $U$ of $\t^*$
that contains the image of $\mu$ 
and such that $\mu \colon M \rightarrow U$ is proper as a map to $U$.
We then call $(M,\omega,\mu,U)$ a \textbf{complexity $\mathbf k$ space}.
For classifications in this generality,
see~\cite{karshon-lerman, centered_hamiltonians, tall_uniqueness, 
tall_existence}.

The Guillemin--Sternberg--Marle local normal form 
theorem~\cite{GS_local_normal_form, marle_local_normal_form} 
describes neighbourhoods of orbits of Hamiltonian actions of compact Lie groups.
We will need it for torus actions.

\begin{definition}\label{def:ham_model}
For a torus $T$, a \textbf{Hamiltonian $\mathbf T$-model} 
is a Hamiltonian $T$-manifold that is obtained by the following construction.
Let $H$ be a closed subgroup of $T$, choose a splitting 
$\t^* \cong \h^* \times (\t/\h)^*$, and let $(V,\omega_V)$ be a symplectic 
vector space with a linear symplectic $H$ action and a quadratic momentum map
$\mu_V \colon V \to \h^*$.
Consider $T \times V \times (\t/\h)^*$ with the $H$ action
where $h \in H$ acts by (right) multiplication by $h^{-1}$ on the $T$ factor, 
by the given representation on the $V$ factor, and trivially on the last factor,
and with the $T$-action by left multiplication on the first factor.
Equip it with the two-form $\omega' \oplus \omega_V$,
where $\omega'$ the pullback under the inclusion map 
$T \times (\t/\h)^* \to T \times \t^*$ of the canonical two-form on the 
cotangent bundle of $T$.
The model $T \times_H V \times (\t/\h)^*$ is the quotient of 
$T \times V \times (\t/\h)^*$ by the $H$ action,
with the two-form that pulls back to $\omega' \oplus \omega_V$,
and with the momentum map $\mu([t,z,\nu]) = \nu + \mu_V(z)$.
\end{definition}
\begin{theorem}\label{local_normal_form_theorem}
Let a torus $T$ act on a symplectic manifold $(M, \omega)$ with momentum map 
$\mu \colon M \rightarrow \t^*$. 
Then for every $x \in M$ there exist 
a Hamiltonian $T$-model $T \times_H V \times (\t/\h)^*$
and an equivariant symplectomorphism 
from an invariant neighbourhood of $T \cdot x$ in $M$ to a neighbourhood 
of the zero section $T \times_H \{0\} \times \{0\}$ in the Hamiltonian
$T$-model that takes $x$ to $[1,0,0]$.
\end{theorem}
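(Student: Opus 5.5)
The plan is the standard equivariant Darboux–Weinstein argument: build a $T$-equivariant diffeomorphism from a neighbourhood of the orbit to a neighbourhood of the zero section of a suitable model, arrange that it matches the symplectic forms along the orbit, and then apply an equivariant relative Moser argument. Let $O = T\cdot x$ and let $H = T_x$ be the stabilizer of $x$, a closed subgroup with Lie algebra $\h$.

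\emph{Step 1: the orbit is isotropic.} For $\xi,\eta\in\t$ we have
$\omega(\xi_M,\eta_M) = -d\langle\mu,\xi\rangle(\eta_M)$.
This vanishes because $\mu$ is $T$-invariant, which holds for any momentum map of a torus action on a connected manifold (up to the normalization already built into the definition above). Hence $T_xO \cong \t/\h$ is isotropic.

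\emph{Step 2: the symplectic normal space and the model.} Let $V := (T_xO)^{\omega}/T_xO$ be the symplectic normal space. The compact group $H$ acts linearly and symplectically on $V$, so $V$ carries the quadratic momentum map $\mu_V(v)(\zeta) = \tfrac12\,\omega_V(\zeta\cdot v, v)$ (with the sign fixed to match the convention $\iota_{\xi_M}\omega=-d\langle\mu,\xi\rangle$). Choose an $H$-invariant splitting $\t^* \cong \h^*\times(\t/\h)^*$. This gives the Hamiltonian $T$-model $Y = T\times_H V\times(\t/\h)^*$. In $Y$ the zero section $T\times_H\{0\}\times\{0\}$ is an orbit with stabilizer $H$, and its symplectic normal representation is again $V$.

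Next pick an $H$-invariant (hence $H$-symplectic) decomposition
$T_xM = T_xO \oplus W \oplus V'$. Here $V'\subset (T_xO)^\omega$ is an $H$-invariant symplectic complement to $T_xO$, so $V'\cong V$. The space $W$ is an $H$-invariant isotropic complement paired nondegenerately with $T_xO$ by $\omega$, so $W\cong(\t/\h)^*$. Such complements exist by averaging over the compact group $H$. This gives an $H$-equivariant linear symplectic isomorphism
$T_xM \cong T_{[1,0,0]}Y = \t/\h \oplus (\t/\h)^*\oplus V$.

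\emph{Step 3: equivariant identification and Moser.} Using an invariant metric and the equivariant tubular neighbourhood (slice) theorem, the normal bundle of $O$ is $T\times_H(W\oplus V')$. Together with the linear isomorphism of Step 2, this yields a $T$-equivariant diffeomorphism $\phi$ from an invariant neighbourhood of $O$ onto a neighbourhood of the zero section in $Y$, sending $x\mapsto[1,0,0]$. One must arrange that $d\phi_x$ is exactly the symplectic isomorphism of Step 2. Then, by $T$-equivariance and invariance of both forms, $\phi^*\omega_Y=\omega$ on $TM|_O$.

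Now set $\omega_t = (1-t)\omega + t\phi^*\omega_Y$. It is nondegenerate on a small invariant neighbourhood of $O$. The difference $\omega_1-\omega_0$ is closed, $T$-invariant, and vanishes on $TM|_O$. The relative Poincaré lemma, using the homotopy operator of the fibrewise radial retraction onto $O$, gives a primitive $\beta$ vanishing along $O$; averaging over $T$ makes $\beta$ invariant. The Moser vector field $X_t$, defined by $\iota_{X_t}\omega_t = -\beta$, is then $T$-invariant and zero on $O$. Its time-one flow is therefore defined near $O$, fixes $O$, is $T$-equivariant, and pulls $\phi^*\omega_Y$ back to $\omega$. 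Composing it with $\phi$ gives the required equivariant symplectomorphism.

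The momentum maps then automatically agree up to an additive constant on the connected neighbourhood, which the statement does not constrain.

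The main obstacle is Step 2 together with the requirement in Step 3 that $d\phi_x$ be symplectic. One has to choose the complements $W$ and $V'$ $H$-invariantly so that $T_xO\oplus W$ is a symplectic subspace $\omega$-orthogonal to $V'$, with $W$ Lagrangian in it. I would do this by first choosing an $H$-invariant compatible complex structure $J$ on $T_xM$, obtained by averaging a compatible metric and using the polar decomposition. Then set $W := J(T_xO)$ and let $V'$ be the $\omega$-orthogonal complement of $T_xO\oplus W$.
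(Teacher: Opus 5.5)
Your proposal is correct and is essentially the standard proof. The paper does not prove this theorem itself but cites Guillemin--Sternberg and Marle \cite{GS_local_normal_form, marle_local_normal_form}, and your argument is the equivariant isotropic-embedding argument behind those references: you identify the symplectic normal data of the isotropic orbit ($T_xO\cong\t/\h$, the isotropic complement $J(T_xO)\cong(\t/\h)^*$, and the symplectic slice $V$) $H$-equivariantly with those of the model at $[1,0,0]$, and then conclude with the equivariant tubular neighbourhood theorem and an equivariant relative Moser argument.
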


Next, we introduce the Duistermaat-Heckman measure. The measure induced on $M$ 
by the volume form $\omega^n/n ! $ is called the \textbf{Liouville measure}. 
Pushing it forward to $\t^*$ by the momentum map $\mu$, we obtain the 
\textbf{Duistermaat-Heckman measure} on $\t^*$. 
The Duistermaat-Heckman measure can be expressed as Lebesgue measure on $\t^*$
times the \textbf{Duistermaat-Heckman function}
$f_{\DuHe} \colon \t^* \rightarrow \mathbb{R}_{\ge 0}$,
which has the following properties.  
The function $f_{\DuHe}$ is continuous on the momentum image; 
the momentum image decomposes into polyhedral \textit{chambers}
(connected components of the regular values of $\mu$),
divided by \textit{walls}
(connected components of $\mu(M^H)$ for 1-dimensional subtori $H \subset T$);
and on each chamber, $f_{\DuHe}$ is polynomial 
of degree at most the complexity $k$.
For more information see, e.g.,~\cite{duistermaat_heckman, goldin-holm-jeffrey};
also see~\cite{atiyah_convexity, gs_convexity, gls1, gls2, gs_birational}.

Let $y \in \t^*$ be a regular value of $\mu$. Then the level set 
$\mu^{-1}(y)$ is a smooth manifold, which is preserved under the $T$-action 
since $\mu$ is $T$-invariant.
Because $y$ is a regular value of $\mu$, 
the action of $T$ on $\mu^{-1}(y)$ is locally-free.
Let $i_y \colon \mu^{-1}(y) \rightarrow M$ be the inclusion map
of the level set. Then the two-form $i_y^*\omega$ is basic,
and it defines a basic cohomology class $[i_y^*\omega]_{\basic}$
in $H^2_{\basic}(\mu^{-1}(y))$.
We will also need to consider
the real Chern class $c_1 \in H_{\basic}^2(\mu^{-1}(y), \t)$ 
of the $T$-manifold $\mu^{-1}(y)$; see \cref{real_chern_class_definition}.

\begin{remark}
In the language of orbifolds, the quotient space $\mu^{-1}(y)/T$ 
is an orbifold~$M_y$, and there exists a unique symplectic form $\omega_y$
on $M_y$ satisfying
$        \pi^*\omega_y = i_y^*\omega$,
where $\pi \colon \mu^{-1}(y) \rightarrow \mu^{-1}(y)/T$ is the quotient map.
The symplectic orbifold $(M_y, \omega_y)$ is called the \textbf{reduced space}
at $y$, see~\cite{mardsen_weinstein}. 
The basic cohomology class $[i_y^*\omega]_{\basic}$ can be identified 
with the de Rham cohomology class $[\omega_y] \in H_{\dR}^2(\mu^{-1}(y)/T)$,
and the basic cohomology class $c_1$ can be identified 
with the de Rham cohomology class of the curvature two-form
of any connection one-form on the orbi-bundle
$\mu^{-1}(y) \rightarrow \mu^{-1}(y)/T$.
\end{remark}

Since $y$ is a regular value and $\mu$ is proper, 
there exist a neighbourhood $U$ of $y$
and a diffeomorphism $\Psi \colon \mu^{-1}(U) \to U \times \mu^{-1}(y)$
such that the following diagram commutes.
    \[
    \begin{tikzcd}[column sep=large, row sep=large]
    \mu^{-1}(U) \arrow[r, "\Psi"] \arrow[rd, "\mu"'] & U \times \mu^{-1}(y) \arrow[d, "\projection_1"] \\
    & U
    \end{tikzcd}
    \]
This allows us to identify the level set $\mu^{-1}(z)$ 
with the level set $\mu^{-1}(y)$ for each $z \in U$.  
The basic cohomology class 
$[i_z^*\omega]_{\basic} \in H^2_{\basic} (\mu^{-1}(z))$,
where $i_z \colon \mu^{-1}(z)\rightarrow M$ is the inclusion map,
then becomes a class in $H_{\basic}^2(\mu^{-1}(y))$.
Another formulation of the Duistermaat-Heckman theorem is that
\begin{equation}\label{dh_variation_equation}
    [i_z^*\omega]_{\basic} = [i_y^*\omega]_{\basic} + \langle z - y, c_1\rangle
\,.
\end{equation}
The first formulation of the Duistermaat-Heckman theorem, which claims that
$f_{\DuHe}$ is a piecewise polynomial, can be proved directly from 
Equation~\eqref{dh_variation_equation}.

\subsection{Non-Hamiltonian symplectic actions}\ 
\label{subsec:nonHam}

This subsection contains background on non-Hamiltonian symplectic 
torus actions. For more information,
see~\cite{benoist, ginzburg_symplectic_actions, mcduff_circle_actions, ratiu_ortega_book}. 
We start by recalling an interesting observation 
from~\cite{benoist, duistermaat_pelayo, ginzburg_symplectic_actions}:

\begin{lemma}  \label{lem:independent_form}
Let a torus $T$ with Lie algebra $\t$
act on a connected symplectic manifold $(M,\omega)$.  
For each $\xi \in \t$, let $\xi_M$ be the corresponding vector
field on $M$. 
Then the bilinear form
$$(\xi,\xi') \mapsto \omega_x(\xi_M, {\xi_M'})$$
on $\t$ is independent of $x \in M$.
\end{lemma}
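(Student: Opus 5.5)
The plan is to show that the function $x \mapsto \omega_x(\xi_M,\xi'_M)$ has zero differential for fixed $\xi,\xi' \in \t$. Since $M$ is connected, a smooth function with vanishing differential is constant, which is the claim.

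First I would record the two inputs. The action is symplectic, so $\mathcal{L}_{\xi_M}\omega = 0$ for every $\xi \in \t$. By Cartan's formula and $d\omega=0$, this gives $d(\iota_{\xi_M}\omega)=0$; that is, each one-form $\iota_{\xi_M}\omega$ is closed. Also, since $T$ is abelian, the fundamental vector fields commute: $[\xi_M,\xi'_M]=0$.

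Next I would compute $d\bigl(\omega(\xi_M,\xi'_M)\bigr)$. Write $f = \omega(\xi_M,\xi'_M) = \iota_{\xi'_M}\iota_{\xi_M}\omega$. By Cartan's formula applied to the one-form $\beta := \iota_{\xi_M}\omega$,
\[
df = d\,\iota_{\xi'_M}\beta = \mathcal{L}_{\xi'_M}\beta - \iota_{\xi'_M} d\beta = \mathcal{L}_{\xi'_M}\beta ,
\]
because $d\beta = 0$. Then I use the commutator identity $\mathcal{L}_{Y}\iota_X - \iota_X\mathcal{L}_Y = \iota_{[Y,X]}$. It gives
\[
\mathcal{L}_{\xi'_M}\iota_{\xi_M}\omega = \iota_{\xi_M}\mathcal{L}_{\xi'_M}\omega + \iota_{[\xi'_M,\xi_M]}\omega = 0 .
\]
Here the first term vanishes because the action is symplectic, and the second because the vector fields commute.

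Hence $df=0$, and connectedness of $M$ finishes the proof. There is no real obstacle. The only point needing care is that both hypotheses are used: invariance of $\omega$ makes $\iota_{\xi_M}\omega$ closed and kills the first term, and commutativity of $T$ kills the bracket term. For a nonabelian group the second term would survive.
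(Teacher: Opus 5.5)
Your proof is correct and is essentially the paper's argument: Cartan's formula plus the identity $[\mathcal{L}_X,\iota_Y]=\iota_{[X,Y]}$, with $T$-invariance of $\omega$, $d\omega=0$, and commutativity of the fundamental vector fields killing every term, and then connectedness of $M$. The only cosmetic difference is that you first record $d\iota_{\xi_M}\omega=0$ as a separate step, whereas the paper expands that term inline in a single chain of equalities.
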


\begin{proof}
More generally, we will allow $T$ to be any connected abelian Lie group.

We have
\begin{equation*}
\begin{aligned}
d\iota_{{\xi_M'}}\iota_{\xi_M}\omega  
 & = \mathcal{L}_{{\xi_M'}} \iota_{\xi_M}\omega 
   - \iota_{{\xi_M'}}d\iota_{\xi_M}\omega \\
    &= \iota_{\xi_M}\mathcal{L}_{{\xi_M'}}\omega 
  + \iota_{[\xi_M, {\xi_M'}]}\omega  
  - \iota_{{\xi_M'}}\mathcal{L}_{\xi_M}\omega 
  + \iota_{{\xi_M'}}\iota_{\xi_M}d\omega \\  
 &= 0,
\end{aligned}
\end{equation*}
where the first equality is by Cartan's formula, the second equality is
by the identity $[\mathcal{L}_X, \iota_Y] = \iota_{[X, Y]}$ and Cartan's
formula, and the last equality is by the $T$-invariance of $\omega$,
by $[\xi_M, {\xi_M'}] = 0$, and by $d\omega = 0$. Therefore, the
function $x \mapsto \omega_x(\xi_M, {\xi_M'})$ is constant on $M$.
\end{proof}

We deduce the following useful corollary 
(see~\cite[Lemma 2.12]{duistermaat_pelayo} or~\cite[Lemma 2.1]{benoist}):

\begin{corollary}\label{cor:sigma_constant}
Given an action of a torus $T$ 
on a connected symplectic manifold $(M,\omega)$,
the pullback 
\begin{equation}\label{sigma_definition}
 \omega_T := \rho_x^* \omega 
\end{equation}
by the orbit map $\rho_x \colon T \to M$, \ $a \mapsto a \cdot x$,
is a two-form on $T$ with constant coefficients that is 
independent of $x \in M$. In particular, if one orbit is isotropic, 
then all orbits are isotropic.
\end{corollary}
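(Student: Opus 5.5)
The plan is to derive \cref{cor:sigma_constant} directly from \cref{lem:independent_form} by evaluating the pullback $\rho_x^*\omega$ on left-invariant (equivalently, since $T$ is abelian, translation-invariant) vector fields on $T$. Fix $x\in M$ and $a\in T$. For $\xi\in\t$, let $\xi_T$ denote the invariant vector field on $T$ generated by $\xi$. The derivative of the orbit map is computed as
\[
d\rho_x(a)\,\xi_T(a) = \frac{d}{ds}\Big|_{s=0} \exp(s\xi)a\cdot x = \xi_M(a\cdot x),
\]
where the sign convention for $\xi_M$ may introduce a factor $-1$ that is harmless below, since it enters twice. This uses commutativity of $T$ to move $\exp(s\xi)$ to either side. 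Consequently
\[
(\rho_x^*\omega)_a(\xi_T,\xi'_T) = \omega_{a\cdot x}(\xi_M,\xi'_M).
\]

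By \cref{lem:independent_form}, the right-hand side is a bilinear form $\sigma(\xi,\xi')$ on $\t$ that is independent of the point $a\cdot x\in M$. Therefore it is independent of both $a$ and $x$. It follows that $\rho_x^*\omega$ pairs the invariant frame $\{\xi_T\}$ to constants, so it is the invariant (constant-coefficient) two-form on $T$ determined by $\sigma$. Moreover, it is the same two-form for every $x$, which justifies the notation $\omega_T$.

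For the final claim, recall that the orbit $T\cdot x$ is the image of $\rho_x$, and its tangent space at $a\cdot x$ is spanned by the vectors $\xi_M(a\cdot x)$. Hence the orbit is isotropic exactly when $\sigma = 0$, that is, when $\omega_T = 0$. Since $\omega_T$ does not depend on $x$, if one orbit is isotropic then all orbits are.

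No step here is a real obstacle; the only care needed is the identification $d\rho_x(\xi_T)=\pm\xi_M$, which relies on $T$ being abelian.
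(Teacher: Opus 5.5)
Your proposal is correct and is exactly the deduction the paper intends: the paper states this corollary as an immediate consequence of Lemma~\ref{lem:independent_form} (citing Duistermaat--Pelayo and Benoist). Your identity $(\rho_x^*\omega)_a(\xi_T,\xi'_T)=\omega_{a\cdot x}(\xi_M,\xi'_M)$, together with the observation that the tangent space of the orbit at $a\cdot x$ is spanned by the vectors $\xi_M(a\cdot x)$, is precisely the step that makes the deduction immediate.
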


\begin{remark} \label{fixed implies isotropic}
By \cref{cor:sigma_constant}, if there exists a fixed point,
then all orbits are isotropic.
Thus, Hamiltonian torus actions have isotropic orbits.
\end{remark}

For Hamiltonian flows, we deduce the following from
\cref{lem:independent_form}:

\begin{lemma} \label{lem:ham_is_invariant}
Let a torus $T$ with Lie algebra $\t$
act on a connected symplectic manifold $(M,\omega)$, and let $\xi \in \t$.
Suppose that the vector field $\xi_M$ is generated by a Hamiltonian
function $\mu^\xi \colon M \to \R$. Then $\mu^\xi$ is $T$-invariant.
\end{lemma}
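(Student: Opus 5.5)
Since $T$ is connected, it suffices to show that $\mu^\xi$ is annihilated by every infinitesimal generator, i.e.\ that $d\mu^\xi(\xi'_M) = 0$ for all $\xi' \in \t$. Then $\mu^\xi$ is constant along each one-parameter subgroup orbit $t \mapsto \exp(t\xi')\cdot x$. These one-parameter subgroups generate $T$, so $\mu^\xi(a\cdot x)=\mu^\xi(x)$ for all $a\in T$ and $x \in M$.

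The computation uses the Hamiltonian condition $\iota_{\xi_M}\omega = -d\mu^\xi$. Contracting with $\xi'_M$ gives
\[
  d\mu^\xi(\xi'_M) = -\omega(\xi_M,\xi'_M).
\]
By \cref{lem:independent_form}, the function $x\mapsto \omega_x(\xi_M,\xi'_M)$ is a constant $c$ on $M$. It remains to show $c=0$.

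This is the only real step. I plan to integrate $d\mu^\xi(\xi'_M)=-c$ along the flow of $\xi'_M$. Fix $x\in M$ and set $\gamma(t)=\exp(t\xi')\cdot x$. Then
\[
  \frac{d}{dt}\mu^\xi(\gamma(t)) = -c ,
\]
so $\mu^\xi(\gamma(t)) = \mu^\xi(x) - ct$. On the other hand, $\gamma(t)$ stays in the orbit $T\cdot x$, which is compact because $T$ is compact. The continuous function $\mu^\xi$ is therefore bounded on $T\cdot x$. A function that is affine in $t$ and bounded on all of $\R$ must be constant, so $c=0$. Note that this argument uses compactness of $T$, not of $M$, which fits the lemma's hypothesis that $M$ is only connected.

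With $c=0$ we have $d\mu^\xi(\xi'_M)=0$ for every $\xi'\in\t$, and the reduction in the first paragraph gives $T$-invariance of $\mu^\xi$. There is an alternative route if $\xi'$ happens to generate a closed subgroup: pick $t_0\neq 0$ with $\exp(t_0\xi')=1$ and compare the values of $\mu^\xi$ at $t=0$ and $t=t_0$. The boundedness argument avoids needing such a period in general, so I will use it.
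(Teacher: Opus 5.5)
Your proof is correct and takes essentially the same route as the paper: both reduce the lemma to showing that the constant $\omega(\xi_M,\xi'_M)$ from \cref{lem:independent_form} vanishes, and both use compactness of a $T$-orbit to do so. The only difference is cosmetic: the paper evaluates at a point where $\mu^\xi$ attains its maximum on a compact orbit, where $d\mu^\xi$ vanishes on the orbit's tangent vectors, whereas you note that $\mu^\xi$ is affine with slope $-c$ along a flow line yet bounded on the orbit.
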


\begin{proof}
More generally, we will allow $T$ to be any connected abelian Lie group,
and will assume that the action has at least one compact orbit.

By Hamilton's equation for $\mu^\xi$,
for each $\eta \in \t$ we have
\begin{equation*}\label{eq_ham_lemma}
    d\mu^\xi(\eta_M) = -\omega(\xi_M, \eta_M)
\end{equation*}
at any point. Let $\calO$ be a compact $T$-orbit 
and $x \in \calO$ be a point where $\mu^\xi|_\calO$ attains its maximum.
Because $\eta_M|_x$ is in $T_x\calO$ and $d\mu^\xi$ vanishes on $T_x\calO$,
we have $d\mu^\xi(\eta_M) = 0$ at $x$, and therefore $\omega(\xi_M, \eta_M) = 0$ at $x$. By \cref{lem:independent_form}, $\omega(\xi_M, \eta_M) = 0$ everywhere, so $d\mu^\xi(\eta_M) = 0$ everywhere as well.
Varying $\eta$, we obtain the lemma.
\end{proof}

Let a torus $T$ with Lie algebra $\t$ act symplectically on a connected
symplectic manifold $(M, \omega)$.
Because the action is symplectic, the one-form $\iota_{\xi_M}\omega$
is closed for every $\xi$ in $\t$. 
Hence, $\int_A \iota_{\xi_M} \omega$ is well defined for $A \in H_1(M;\Z)$.
%

\begin{definition}\label{def:group_of_periods}
The \textbf{group of periods} of the $T$-action is 
$$ P := \Big\{ \xi \mapsto \int_A \iota_{\xi_M}\omega \ | \ 
A \in H_1(M;\Z) \Big\} \, \subset \, \t^*\,.$$
\end{definition}

The $T$-action is Hamiltonian if and only if $P = \{0\}$. 
In particular, for non-Hamiltonian actions, the group of periods $P$ 
is non-trivial.
This relationship can be further sharpened by relating $P$ with the
following important subalgebra.

\begin{definition}\label{maximal_hamiltonian_subalgebra}
The maximal Hamiltonian Lie subalgebra of $\t$ is 
\begin{equation*}
\t_{\ham} := \{ \xi \in \t \ | \ \xi_M \text{ is Hamiltonian} \}.
\end{equation*}
\end{definition}

Let $\t_{\ham}^0$ be the annihilator of $\t_{\ham}$ in $\t^*$, 
and let $\on{span}_{\mathbb{R}} P$ be the span of $P$ in $\t^*$. 
\begin{lemma}\label{lem:spanp}
    $\on{span}_{\mathbb{R}} P = \t_{\ham}^0$.
\end{lemma}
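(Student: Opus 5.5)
We prove the two inclusions separately. The argument is linear algebra on the pairing between $\t$ and $H_1(M;\R)$ induced by the closed one-forms $\iota_{\xi_M}\omega$.

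For $\on{span}_{\R} P \subseteq \t_{\ham}^0$, it suffices to show $P \subseteq \t_{\ham}^0$, since $\t_{\ham}^0$ is a linear subspace. Let $\xi \in \t_{\ham}$. Then $\iota_{\xi_M}\omega = -d\mu^\xi$ is exact, so $\int_A \iota_{\xi_M}\omega = 0$ for every $A \in H_1(M;\Z)$. Hence every element of $P$ vanishes on $\xi$, as required.

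For the reverse inclusion we pass to annihilators. Both sides are linear subspaces of the finite-dimensional space $\t^*$, and $(\t_{\ham}^0)^0 = \t_{\ham}$. So it suffices to show that the annihilator of $\on{span}_{\R} P$ in $\t$ is contained in $\t_{\ham}$.

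Let $\xi \in \t$ satisfy $\int_A \iota_{\xi_M}\omega = 0$ for all $A \in H_1(M;\Z)$. The closed one-form $\iota_{\xi_M}\omega$ then has all integral periods zero. Any loop in $M$ defines a class in $H_1(M;\Z)$, so its integral over every loop vanishes. Since $M$ is connected, fix a base point $x_0$ and set $f(x) = -\int_{x_0}^{x} \iota_{\xi_M}\omega$ along any path. This is well defined, smooth, and satisfies $df = -\iota_{\xi_M}\omega$. Thus $\xi_M$ is Hamiltonian and $\xi \in \t_{\ham}$.

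Combining the two steps gives $\t_{\ham}^0 = ((\on{span}_{\R}P)^0)^0 \subseteq \on{span}_{\R} P$, and hence equality.

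The proof uses the elementary de Rham fact that a closed one-form with vanishing periods on all integral $1$-cycles is exact on a connected manifold. It does not need compactness or finiteness of $H_1(M)$, although $H_1(M;\Z)$ is finitely generated when $M$ is closed. One point to check is that $\t_{\ham}$ is a linear subspace, so that taking double annihilators is legitimate. This holds because $\xi \mapsto \iota_{\xi_M}\omega$ is linear and sums and scalar multiples of exact forms are exact.
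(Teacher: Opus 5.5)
Your proof is correct and takes the same route as the paper: both identify the annihilator $P^0 \subset \t$ with $\t_{\ham}$ and then take annihilators in the finite-dimensional space $\t^*$. The only difference is that you write out the two inclusions and the de Rham fact (a closed one-form with vanishing integral periods on a connected manifold is exact), which the paper compresses into ``by the definition of $\t_{\ham}$.''
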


\begin{proof}
Identifying $\t$ with $(\t^*)^*$, we denote by $P^0$
the annihilator of $P$ in $\t$.  
By definition, for each $\xi \in \t$, we have $\xi \in P^0$ if and only if
$\int_A \iota_{\xi_M} \omega = 0$ for all $A \in H_1(M;\Z)$.
By the definition of $\t_{\ham}$, this holds exactly if $\xi \in \t_{\ham}$.
So $ (\on{span}_\R P)^0 = P^0 = \t_{\ham}$. The result follows.
%
\end{proof}

When the group of periods $P$ is discrete in $\t^*$, 
the quotient $\t^*/P$ is a \textbf{cylinder}, i.e., the product 
of a Euclidean space and a torus; see \eqref{cylinder}.
We can then define a cylinder-valued momentum map
$\mu \colon M \rightarrow \t^*/P$ that generates the torus action. 

More generally, one can consider a cylinder-valued momentum map 
with values in the quotient of $\t^*$ 
by any discrete subgroup of $\t^*$ that contains $P$.
A cylinder-valued momentum map is \textbf{tight}
if it takes values in $\t^*/P$ (see \cite[Definition 4.1]{classification_non_ham_s1}).
When $M$ is connected, a cylinder-valued momentum map is tight
if and only if its level sets are connected (see \cref{connected_level_sets_lemma}).

In the literature, it is usually assumed 
that the cohomology class $[\omega] \in H^2(M, \mathbb{R})$
is integral, that is, contained in the image
of $H^2(M, \mathbb{Z}) \hookrightarrow H^2(M, \mathbb{R})$. 
This implies that the group of periods $P$ is a subset of
the weight lattice $\t_{\mathbb{Z}}^*$; 
in particular, it is a discrete subgroup of $\t^*$. 
(The weight lattice $\t_{\mathbb{Z}}^* \subset \t^*$
is the dual lattice to the integral lattice
$\t_{\mathbb{Z}} := \ker(\exp) \subset \t$.)
In this case, there is a (not necessarily tight) cylinder-valued momentum map
with values in $\t^*/\t^*_\Z$.
For more information on cylinder-valued momentum maps,  
see~\cite{cylinder_momentum_map}, \cite{mcduff_circle_actions},
\cite[Section 5.2]{ratiu_ortega_book}, or~\cite{pelayo_circle_valued_ham}.

We can use \cref{lem:spanp} to choose a convenient basis for $\t$ 
with respect to $P$
that will be useful when using cylinder-valued momentum maps:

\begin{lemma}\label{lem:split_basis}
Let a torus $T$ act on symplectically on a compact connected symplectic 
manifold $(M, \omega)$, with isotropic orbits. 
Let $P$ be the group of periods (\cref{def:group_of_periods}), 
and let $\t_{\ham}$ be the maximal Hamiltonian Lie subalgebra 
(\cref{maximal_hamiltonian_subalgebra}). 
Assume that $P$ is discrete.
Let $m := \dim T$ and $k := \dim \t_{\ham}$. 
Furthermore, let $\t_c$ be a complementary Lie subalgebra 
to $\t_{\ham}$ in $\t$.
    
Then $P$ can be identified with a subgroup of $\t_c^*$, and we can find
a basis $\xi_1,\dots,\xi_m$ for $\t$ such that 
$\xi_1,\dots,\xi_k$ are a basis for $\t_{\ham} \cong \mathbb{R}^k$, 
and $\xi_{k+1},\dots,\xi_m$ are a basis for 
$\t_c \cong \mathbb{R}^{m-k}$, and such that the dual basis 
$\xi_1^*,\dots,\xi_m^*$ satisfies that $\xi_{k+1}^*,\dots,\xi_m^*$ 
are a basis for $P \subset \t_c^*$, and we have the identification
\begin{equation}\label{cylinder}
\t^*/P \cong 
 \t_{\ham}^* \times (\t_c^*/P) \cong \mathbb{R}^k \times\mathbb{T}^{m - k}.
\end{equation}
Additionally, if $\mu \colon M \rightarrow \t^*/P$ 
is a cylinder-valued momentum map for the $T$-action, then its compositions 
with the projections $\t^*/P \rightarrow \t_c^*/P$ 
and $\t^*/P \rightarrow \t_{\ham}^*$ define (cylinder-valued) momentum maps 
for the $\t_c$-action and $\t_{\ham}$-action, respectively.
    
\end{lemma}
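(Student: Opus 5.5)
The argument is linear algebra on $\t^*$, using \cref{lem:spanp} and the discreteness of $P$; the final assertion about momentum maps is then a direct check. Note that the lemma is stated at the Lie algebra level: $\t_c$ is only a complementary linear subspace, which is automatically an abelian subalgebra, and the identification \eqref{cylinder} uses only linear data.

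First, the identification $\t_c^* \cong \t_{\ham}^0$. Since $\t = \t_{\ham} \oplus \t_c$, restriction of functionals to $\t_c$ gives an isomorphism $\t_{\ham}^0 \to \t_c^*$. By \cref{lem:spanp}, $P \subset \on{span}_\R P = \t_{\ham}^0$. Hence $P$ becomes a subgroup of $\t_c^*$ whose real span is all of $\t_c^*$.

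Next, a basis for $P$. A discrete subgroup of a finite-dimensional real vector space is a lattice in its span, and here the span is $\t_c^*$, of dimension $m-k$. So $P$ is free abelian of rank $m-k$, and I choose a $\Z$-basis $p_{k+1},\dots,p_m$ of $P$; it is also an $\R$-basis of $\t_c^* \cong \t_{\ham}^0$.

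Then the basis of $\t$. Take any basis $\xi_1,\dots,\xi_k$ of $\t_{\ham}$, and let $\xi_{k+1},\dots,\xi_m \in \t_c$ be the basis of $\t_c$ dual to $p_{k+1},\dots,p_m$. In the dual basis of $\t^*$:
\begin{itemize}
\item for $j>k$, the functional $\xi_j^*$ vanishes on $\t_{\ham}$ and equals $p_j$ on $\t_c$, so $\xi_j^* = p_j$ under the identification above;
\item for $i \le k$, the functionals $\xi_i^*$ vanish on $\t_c$, so they span $\t_c^0 \cong \t_{\ham}^*$.
\end{itemize}
Thus $\t^* = \t_c^0 \oplus \t_{\ham}^0 \cong \t_{\ham}^* \times \t_c^*$, and $P$ lies entirely in the second factor as the lattice $\bigoplus_{j>k} \Z \xi_j^*$. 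Taking the quotient by $P$ gives
\[
\t^*/P \cong \t_{\ham}^* \times (\t_c^*/P) \cong \R^k \times \mathbb{T}^{m-k},
\]
which is \eqref{cylinder}.

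Finally, the momentum maps. If $\mu \colon M \to \t^*/P$ generates the action, then locally $\mu$ lifts to $\tilde\mu$ with values in $\t^*$, and $\iota_{\xi_M}\omega = -d\langle \tilde\mu, \xi\rangle$. Projecting $\t^*/P$ onto a factor corresponds to restricting functionals to $\t_{\ham}$ or to $\t_c$. The projection $\t^*/P \to \t_c^*/P$ is well defined since $P$ lies in the $\t_c^*$ factor, and composing it with $\mu$ gives a cylinder-valued map satisfying the momentum map equation for $\xi \in \t_c$. The other factor contains no part of $P$, so the projection $\t^*/P \to \t_{\ham}^*$ gives a globally well-defined $\t_{\ham}^*$-valued momentum map for the $\t_{\ham}$-action. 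Invariance of both compositions follows from that of $\mu$ (cf. \cref{lem:ham_is_invariant}).

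There is no real obstacle here. The only point needing care is being explicit about which identification of $\t_c^*$ inside $\t^*$ is used, namely $\t_{\ham}^0$. The isotropic-orbits hypothesis is not needed for this linear algebra; it only ensures that cylinder-valued momentum maps are $T$-invariant in the surrounding context.
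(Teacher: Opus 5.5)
Your proof is correct and is the same argument as the paper's: identify $\t_c^*$ with $\t_{\ham}^0$, use \cref{lem:spanp} to see that $P$ is a full-rank lattice there, then combine a basis on the $\t_{\ham}$ side with a $\Z$-basis of $P$ and dualize. You also supply details the paper leaves implicit, namely the rank count that comes from discreteness of $P$ and the verification of the final momentum-map assertion.
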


\begin{proof}
    Since $\t_c$ is complementary to $\t_{\ham}$, we can identify $\t_c$ with $\t_{\ham}^0$, and therefore by \cref{lem:spanp} it can be identified with $\on{span}_{\mathbb{R}} P$. The claim then follows by choosing an arbitrary basis $\xi_1^*,\dots,\xi_k^*$ for $\t_{\ham}^*$, an arbitrary basis $\xi_{k-1}^*,\dots,\xi_m^*$ for the lattice $P$, and dualizing.
\end{proof}

Cylinder-valued momentum maps retain many of the useful properties of
ordinary momentum maps. For example, if the orbits of a $T$-action
are isotropic, then its cylinder-valued momentum map is $T$-invariant:

\begin{lemma}\label{lemma:isotropic_invariant}
Let a torus $T$ act on a connected symplectic manifold $(M,\omega)$,
with isotropic orbits. 
Assume that the group of periods $P$ is discrete.
Let $\mu \colon M \rightarrow \t^*/P$ be a cylinder-valued momentum map 
for the action. Then $\mu$ is $T$-invariant.
\end{lemma}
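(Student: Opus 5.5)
The plan is to show that the differential of $\mu$ annihilates every vector tangent to the orbits. Since $T$ is connected, this implies that $\mu$ is constant along each orbit. Here $\mu$ takes values in the cylinder $\t^*/P$, and the defining property of a cylinder-valued momentum map is that, locally, any lift $\tilde\mu$ of $\mu$ to $\t^*$ satisfies $\iota_{\xi_M}\omega = -d\langle \tilde\mu,\xi\rangle$ for every $\xi \in \t$. Because $P$ is discrete, the quotient map $\t^* \to \t^*/P$ is a covering map. Its differential therefore identifies every tangent space of $\t^*/P$ with $\t^*$, so $d\mu$ is a well-defined $\t^*$-valued one-form on $M$ and agrees with $d\tilde\mu$ for any local lift.

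Fix $x \in M$ and $\eta \in \t$. For each $\xi \in \t$ we compute
\[
\langle d\mu_x(\eta_M), \xi\rangle = d\langle\tilde\mu,\xi\rangle_x(\eta_M) = -\omega_x(\xi_M,\eta_M).
\]
Because the orbits are isotropic, and both $\xi_M|_x$ and $\eta_M|_x$ are tangent to the orbit $T\cdot x$, the right-hand side vanishes. Here \cref{cor:sigma_constant} tells us that the hypothesis that orbits are isotropic is unambiguous: if one orbit is isotropic then all are. It follows that $d\mu(\eta_M)=0$ on all of $M$ for every $\eta \in \t$.

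Next we integrate along orbits. For $a=\exp(s\eta)$, the curve $s\mapsto \mu(\exp(s\eta)\cdot x)$ has derivative $d\mu(\eta_M)$ evaluated at $\exp(s\eta)\cdot x$, which is zero. Hence $\mu(\exp(s\eta)\cdot x)=\mu(x)$ for all $s$. The exponential map of the torus is surjective, so every $a \in T$ has this form, and therefore $\mu(a\cdot x)=\mu(x)$ for all $a \in T$.

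The only point requiring care is the first step: making sense of the derivative of a cylinder-valued map and checking that the momentum-map equation holds for local lifts. This is precisely what the covering argument handles. Note that, unlike \cref{lem:ham_is_invariant}, no compactness or maximum argument is needed here, because isotropy is assumed outright rather than deduced.
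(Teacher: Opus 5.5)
Your proof is correct and is essentially the paper's. The paper notes that isotropic orbits make the action Hamiltonian on an invariant neighbourhood of each orbit, then invokes \cref{lem:ham_is_invariant}, whose proof rests on the same identity $d\langle\mu,\xi\rangle(\eta_M) = -\omega(\xi_M,\eta_M)$. Your version is slightly more direct: isotropy gives $\omega(\xi_M,\eta_M)=0$ pointwise, so you need neither the maximum argument nor the passage to invariant neighbourhoods where a single-valued lift exists, and your covering-map remark correctly handles the cylinder-valued target.
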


\begin{proof}
A $T$-action with isotropic orbits is always locally Hamiltonian 
around each orbit, so the lemma follows from \cref{lem:ham_is_invariant}.
\end{proof}

For Hamiltonian torus actions on compact connected manifolds,
the level sets of the momentum map are connected \cite{atiyah_convexity}
(also see \cite{lerman_tolman}).
A similar result holds for tight cylinder-valued momentum maps:

\begin{lemma}[\cite{benoist}]\label{connected_level_sets_lemma}
Let $T$ act symplectically on a closed connected symplectic manifold 
$(M, \omega)$, and assume that the group of periods $P$ is discrete. 
Then the level sets of a (tight) cylinder-valued momentum map 
$\mu \colon M \rightarrow \t^*/P$ are connected.
\end{lemma}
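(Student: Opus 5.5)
The plan is to lift to the universal cover of the cylinder and reduce connectedness to a statement about an ordinary momentum map. Write $P$ for the group of periods, so $\mu \colon M \to \t^*/P$, and let $q \colon \t^* \to \t^*/P$ be the quotient map.

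\textbf{The covering space.} Consider the pullback covering
\[
\tilde M := \{ (x,\nu) \in M \times \t^* \mid \mu(x) = q(\nu) \},
\]
which carries a lifted map $\tilde\mu(x,\nu) = \nu$. By construction $\tilde\mu$ is an honest momentum map for the lifted $T$-action on $(\tilde M, \pi^*\omega)$.

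\textbf{Step 1: $\tilde M$ is connected.} Tightness is exactly the statement that the homomorphism $H_1(M;\Z) \to P$, $A \mapsto \int_A \iota_{\xi_M}\omega$, is surjective. This homomorphism is the monodromy of the covering $\tilde M \to M$, whose fibres are $P$-torsors. Surjectivity of the monodromy means $\tilde M$ is connected, and it is a Galois covering with deck group $P$ acting by $\nu \mapsto \nu + p$.

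\textbf{Step 2: reduce to the lifted level sets.} For $c \in \t^*$, the projection $\pi$ maps $\tilde\mu^{-1}(c)$ bijectively onto $\mu^{-1}(q(c))$. Indeed, the map is injective because $\nu = c$ is fixed, and it is surjective by the definition of $\tilde M$. It is also a homeomorphism, being a restriction of a covering map to a closed set on which it is injective; properness comes from compactness of $M$. So it suffices to prove that the level sets of $\tilde\mu$ are connected. The obstacle is that $\tilde M$ is noncompact, so Atiyah's theorem does not apply directly.

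\textbf{Step 3: apply the Lerman--Tolman style local-to-global principle.} I use the version with a convex target, as in the definition of a complexity $k$ space in \cref{subsec:ham actions}. Write $\t^* = \t_{\ham}^* \times \t_c^*$ as in \cref{lem:split_basis}. One is tempted to take $U = \t^*$, but the map $\tilde\mu \colon \tilde M \to \t^*$ is \emph{not} proper when $P$ has rank less than the dimension of $\t_c^*$. This rank deficiency is where I expect the main difficulty.

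I would first treat the case where $P$ is a full lattice in $\t_c^*$. In that case the $\t_c^*$-component is compactified by the torus quotient, and $\tilde\mu$ becomes proper: the preimage of a compact set $K$ meets only finitely many $P$-translates of a fundamental domain. Then the local-to-global convexity and connectivity theorem for proper momentum maps to convex open sets (Lerman--Meinrenken--Tolman--Woodward, or Hilgert--Neeb--Plank) applies. Its local input is the local normal form of \cref{local_normal_form_theorem}, which shows that the fibres are locally connected and that $\tilde\mu$ is locally fibre connected and open onto local convex cones. This yields that the level sets of $\tilde\mu$ are connected.

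\textbf{The general case.} By \cref{lem:spanp}, $\on{span}_\R P = \t_{\ham}^0 \cong \t_c^*$, so $P$ spans $\t_c^*$. Since $P$ is discrete and spans, it is a full lattice in $\t_c^*$. The quotient is therefore compact and properness holds in general, which completes the argument.
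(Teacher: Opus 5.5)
\textbf{Gap: the lifted $T$-action exists only for isotropic orbits.} For isotropic orbits your argument works. In that case it is a reasonable self-contained alternative to the paper's proof, which simply cites Benoist's Theorem~3.1. But the lemma is stated for an arbitrary symplectic $T$-action with discrete $P$. The sentence ``$\tilde\mu$ is an honest momentum map for the lifted $T$-action'' is false when the orbits are not isotropic.

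The only candidate lift is $a\cdot(x,\nu)=(a\cdot x,\nu)$, and it preserves $\tilde M$ only if $\mu$ is $T$-invariant. Suppose $\omega_T\neq 0$ (see \cref{cor:sigma_constant}), and choose $\eta\in\t_{\Z}$ with $\omega_T(\cdot,\eta)\neq 0$. Along the loop $t\mapsto \exp(t\eta)\cdot x$, $t\in[0,1]$, the map $\mu$ picks up the nonzero period $\xi\mapsto-\omega_T(\xi,\eta)\in P$. So this loop lifts to a non-closed path in $\tilde M$. Consequently only the vector group $\t$ acts on $\tilde M$; its orbits are noncompact, and $\tilde\mu$ is shifted by $-t\,\omega_T(\cdot,\eta)$ along the $\eta$-flow, so it is not invariant. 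The simplest instance is $T^2$ acting on itself by translations: there $\tilde M\cong\R^2$ and the torus action does not lift at all. In this situation neither the Lerman--Meinrenken--Tolman--Woodward theorem (which concerns Hamiltonian actions of compact groups) nor the local normal form \cref{local_normal_form_theorem} is available. So, as written, you have proved the lemma only for isotropic orbits. That is the only case in which the paper uses it, but it is not the statement.

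\textbf{How to close it.} Keep your covering but drop the group action. Use the purely topological local-to-global principle (Condevaux--Dazord--Molino, Hilgert--Neeb--Plank): a proper, locally fibre connected map from a connected Hausdorff space to a vector space that is locally open onto convex cones has connected fibres. You already have connectedness of $\tilde M$. Properness is automatic for any discrete $P$, since $\tilde\mu^{-1}(K)$ is a closed subset of the compact set $M\times K$. So your claim that properness can fail when $P$ has small rank is incorrect, though harmless.

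The local hypotheses can be checked on $M$. Let $\mathfrak{n}=\ker\omega_T$; it is rational because $\eta\mapsto\omega_T(\cdot,\eta)$ sends $\t_{\Z}$ into the discrete group $P$ up to sign. Let $N=\exp(\mathfrak{n})$, and let $\t'$ be any complement, on which $\omega_T$ is then nondegenerate. Then:
\begin{itemize}
\item the $\mathfrak{n}$-components of $\mu$ are $T$-invariant;
\item the $\t'$-components are $N$-invariant, with differential $-\omega_T|_{\t'}$ along the $\t'$-directions;
\item near $N\cdot x$, where $\mu$ lifts to $\t^*$ because loops in $N$-orbits have zero period, the level set of the $\t'$-components through $x$ is an $N$-invariant symplectic slice $Z$, namely the $\omega$-orthogonal of the $\t'$-directions.
\end{itemize}
Near $x$, $\mu$ is therefore the product of two maps: the honest momentum map of the isotropic $N$-action on $Z$, to which \cref{local_normal_form_theorem} applies, and a local diffeomorphism onto an open subset of the $(\t')^*$-factor. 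This product structure gives local fibre connectedness and local convexity, which completes the argument in the general case.
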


\begin{proof}
This follows from a theorem of Benoist \cite[Theorem 3.1]{benoist}.
Also see~\cite[Lemma 2.2]{classification_non_ham_s1} for a simple proof in
the case of circle-valued momentum maps.
\end{proof}

Lastly, we refer to the following non-trivial result 
of Benoist~\cite[Corollary 3.2]{benoist}.

\begin{lemma}
\label{maximal_hamiltonian_lemma}
Let $T$ act symplectically on closed connected symplectic manifold 
$(M, \omega)$,
and let $\t_{\ham} \subset \t$ be its maximal Hamiltonian Lie subalgebra
(\cref{maximal_hamiltonian_subalgebra}).
Then the subgroup $T_{\ham} := \exp(\t_{\ham}) \subset T$ 
generated by $\t_{\ham}$ is closed in $T$, and is therefore a subtorus.
In particular, $T_{\ham}$ is the unique maximal subtorus of $T$ 
whose action is Hamiltonian.
\end{lemma}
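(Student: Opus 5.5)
The statement concerns the subgroup $T_{\ham}=\exp(\t_{\ham})$ of \cref{maximal_hamiltonian_lemma}. I would show it is closed by passing through the flux homomorphism, which turns the question into the discreteness of a subgroup of $H^1(M;\R)$.

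\emph{Setup.} Let $T' := \overline{\exp(\t_{\ham})}$. It is a closed connected subgroup of $T$, hence a subtorus, with Lie algebra $\t' \supseteq \t_{\ham}$. It suffices to prove $\t' \subseteq \t_{\ham}$. Consider the linear map
$$L \colon \t \to H^1(M;\R), \qquad \xi \mapsto [\iota_{\xi_M}\omega],$$
whose kernel is $\t_{\ham}$ by \cref{maximal_hamiltonian_subalgebra}. The goal is therefore to show that $L$ vanishes on $\t'$.

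\emph{Flux interpretation.} For $\xi\in\t$, the path $t\mapsto \exp(t\xi)$, $t\in[0,1]$, is a symplectic isotopy. Its flux is $\pm L(\xi)$, because the flux of a path generated by a time-independent symplectic vector field $X$ is $[\iota_X\omega]$. Let $\Gamma\subset H^1(M;\R)$ be the flux group, namely the set of fluxes of loops in $\on{Symp}_0(M)$ based at the identity. Two facts about $\Gamma$ enter:
\begin{enumerate}
\item A symplectomorphism $g$ in $\on{Symp}_0(M)$ lies in $\on{Ham}(M)$ if and only if some (equivalently, every) symplectic path from the identity to $g$ has flux in $\Gamma$. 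This is standard.
\item $\Gamma$ is discrete in $H^1(M;\R)$, since $M$ is closed. This is Ono's theorem resolving the flux conjecture.
\end{enumerate}
Given (2), one obtains that $\on{Ham}(M)$ is $C^\infty$-closed in $\on{Symp}_0(M)$. Consequently $G:=\{g\in T : g\in\on{Ham}(M)\}$ is a closed subgroup of $T$. It contains $\exp(\t_{\ham})$, hence contains $T'$.

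\emph{Conclusion.} Take $\eta\in\t'$. For every $s\in[0,1]$ we have $\exp(s\eta)\in T'\subseteq G$, so by (1) the flux of the path $t\mapsto\exp(t\eta)$ for $t\in[0,s]$ lies in $\Gamma$. This flux equals $s\,L(\eta)$. The set $\{s\,L(\eta) : s\in[0,1]\}$ is a connected subset of the discrete set $\Gamma$ containing $0$, so $L(\eta)=0$. Thus $\eta\in\t_{\ham}$, so $\t'=\t_{\ham}$ and $T_{\ham}=T'$ is closed. Uniqueness and maximality follow at once: any subtorus acting in a Hamiltonian way has Lie algebra inside $\t_{\ham}$, hence lies in $T_{\ham}$.

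\emph{Main obstacle.} The delicate input is the discreteness of $\Gamma$, or more precisely of $\Gamma\cap L(\t)$, which is all the argument actually uses. If I wanted to avoid Ono's theorem, I would try to prove directly that $L(\t_\Z)$ is discrete in its span. This is exactly the rationality of $\ker L$ with respect to the lattice $\t_\Z$. For $\eta\in\t_\Z$ and a loop $\gamma$, the value $\langle L(\eta),\gamma\rangle$ is the symplectic area of the torus $(t,s)\mapsto \exp(t\eta)\gamma(s)$. I would attempt to exploit this together with the $T$-invariance of Hamiltonians (\cref{lem:ham_is_invariant}) and the fact that a generic $\zeta\in\t_{\ham}$ has a $T'$-fixed point at the maximum of $\mu^\zeta$. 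I expect this direct route to be genuinely hard, since a fixed point alone does not force Hamiltonicity (McDuff's example), which is why I would rely on the flux argument above.
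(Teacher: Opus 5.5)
Your argument is correct. It is essentially the ``hard'' route that the paper only sketches in the remark following the lemma: Ono's theorem makes $T\cap\on{Ham}(M,\omega)$ closed in $T$, and its identity component is $\exp(\t_{\ham})$. Your flux computation along $t\mapsto\exp(t\eta)$, $t\in[0,s]$, supplies the justification for that last identification, which the remark leaves implicit.

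The paper's actual proof is different: it simply cites Benoist's Corollary~3.2, a soft argument with no Floer theory. Yours rests on Ono's resolution of the flux conjecture.

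Your route can be shortened. For $\eta\in\t_\Z$ the path $t\mapsto\exp(t\eta)$, $t\in[0,1]$, is already a loop, so $L(\t_\Z)\subseteq\Gamma$. Discreteness of $\Gamma$ then gives discreteness of $L(\t_\Z)$, which, as you note yourself, is equivalent to rationality of $\ker L=\t_{\ham}$. Neither fact (1) nor the closedness of $\on{Ham}(M,\omega)$ is needed.

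The area identity in your last paragraph already settles the rational case with no hard input; this is the case of Ginzburg's earlier result cited in the paper. If $N[\omega]$ is integral, then $\langle L(\eta),\gamma\rangle\in\frac{1}{N}\Z$ for all $\eta\in\t_\Z$ and $\gamma\in H_1(M;\Z)$, so $L(\t_\Z)$ is discrete. The real difficulty is irrational $[\omega]$, which is where you need Ono and the paper needs Benoist.

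For the final clause, also state explicitly that $T_{\ham}$ itself acts in a Hamiltonian way. Choose Hamiltonians for a basis of $\t_{\ham}$ and extend linearly; $T$-invariance is automatic by \cref{lem:ham_is_invariant}.
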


\begin{proof}
This is proved by Benoist in~\cite[Corollary 3.2]{benoist}.
See also Ginzburg's~\cite[Proposition 4.2]{ginzburg_symplectic_actions} 
for an earlier proof when $[\omega]$ is rational.
\end{proof}

\begin{remark}
Benoist's proof of \cref{maximal_hamiltonian_lemma} uses ``soft'' methods.
``Hard'' methods allow a quicker proof:
$T_{\ham} := \exp(\t_{\ham})$ 
coincides with the identity component of the intersection 
$T \cap \on{Ham}(M,\omega)$ of $T$ 
with the group $\on{Ham}(M,\omega)$ of Hamiltonian diffeomorphisms of $M$. 
The flux conjecture (see Ono~\cite{ono_flux})
implies that the intersection $T \cap \on{Ham}(M,\omega)$  
is closed in $T$, so its identity component $T_{\ham}$ is a subtorus of $T$.
(We note that the intersection $T \cap \on{Ham}(M,\omega)$ 
can be disconnected: take, for example, a circle action on $S^2 \times T^2$ 
that rotates $S^2$ with speed $1$ and $T^2$ with speed $2$.)
\end{remark}

\section{Contractible orbits}\label{contractible_section}

This section is about contractibility of orbits of a torus action.
Here is an overview of the main results that we prove.
For smooth torus actions (without a symplectic form),
``the orbit maps are null-homotopic'' is equivalent to ``some orbits 
are contractible'' (\cref{lem:contractible}),
but does not imply ``all orbits are contractible''
(Example~\ref{example_fixed_non_contractible}).
For locally-free torus actions with vanishing real Chern class,
the orbits are non-contractible (\cref{non_free_torus_bundle_lemma}).
For a Hamiltonian torus action, all orbits are contractible
(\cref{hamiltonian_has_contractible}).
For a symplectic torus action, if some orbit is non-isotropic,
then all orbits are non-contractible
(\cref{non_isotropic_non_contractible_lemma}).

For a related discussion 
on the contractibility of orbits of smooth and Hamiltonian torus actions,
also see~\cite[Remark 1.8 and the paragraph after Corollary 6.8]{oprea_walsh}.

\begin{lemma} \label{lem:contractible}
Let a torus $T$ act (faithfully) on a (nonempty, connected) manifold $M$.
Then the following are equivalent.
\begin{enumerate}
\item \label{some-contr} Some orbits are contractible;
\item \label{some-null} Some orbit maps are null-homotopic;
\item \label{all-null} All orbit maps are null-homotopic;
\item \label{free-contr} All free orbits are contractible.
\end{enumerate}
\end{lemma}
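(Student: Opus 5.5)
I will prove the cycle of implications (1)$\Rightarrow$(2)$\Rightarrow$(3)$\Rightarrow$(4)$\Rightarrow$(1), using only elementary homotopy theory.

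\emph{Step 1: (1)$\Rightarrow$(2).} Let $\calO = T\cdot x$ be a contractible orbit. The orbit map factors as
\[
\rho_x \colon T \to T/T_x \cong \calO \hookrightarrow M .
\]
The inclusion $\calO\hookrightarrow M$ is null-homotopic, so $\rho_x$ is null-homotopic.

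\emph{Step 2: (2)$\Rightarrow$(3).} This uses connectedness of $M$, as observed in \cref{subsec:ham-vs-nonham}. Take a path $\gamma$ from $x$ to $y$. Then $(s,a)\mapsto a\cdot\gamma(s)$ is a homotopy from $\rho_x$ to $\rho_y$. Hence if one orbit map is null-homotopic, all of them are.

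\emph{Step 3: (3)$\Rightarrow$(4).} Let $x$ have a free orbit. Then the stabilizer $T_x$ is trivial, so $T\to\calO$, $a\mapsto a\cdot x$, is a diffeomorphism. The inclusion of $\calO$ is therefore $\rho_x$ composed with the inverse of this diffeomorphism. Since $\rho_x$ is null-homotopic by (3), so is the inclusion, i.e. $\calO$ is contractible.

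\emph{Step 4: (4)$\Rightarrow$(1), the main obstacle.} This step needs at least one free orbit to exist. For a faithful action of a compact abelian group on a connected manifold, the principal orbit type theorem gives an open dense set of points of principal orbit type. All these points have the same stabilizer $H$ up to conjugacy. Since $T$ is abelian, this means literally the same subgroup $H$, which then fixes an open dense set and hence, by continuity, all of $M$. Faithfulness forces $H=\{1\}$, so free orbits exist, and (4) yields a contractible orbit.

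If one wants to avoid quoting the principal orbit theorem, I would instead argue with the slice theorem. Near an orbit with stabilizer $H$, the model is $T\times_H V$ with $V$ a faithful-on-the-quotient representation. Induction on dimension then shows that generic points have trivial stabilizer, because the set of points fixed by a nontrivial element of $H$ is a proper subspace of $V$.

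Together, Steps 1–4 prove the equivalence. The only non-formal input is the existence of free orbits for faithful torus actions.
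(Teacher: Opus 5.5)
Your proof is correct and follows the same cycle $(1)\Rightarrow(2)\Rightarrow(3)\Rightarrow(4)\Rightarrow(1)$ as the paper, with the same justification at each step: factoring the orbit map through the orbit, homotoping orbit maps along a path in the connected manifold, identifying a free orbit with $T$, and using the existence of free orbits for a faithful action. Your derivation of that existence from the principal orbit type theorem (using that conjugate subgroups of an abelian group are equal) just spells out in more detail the paper's appeal to the slice theorem.
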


\begin{proof}
For any $x \in M$, 
because its orbit map $\rho_x \colon T \to M$ splits as a 
composition $T \longrightarrow T \cdot x \xrightarrow{\text{inclusion}} M$,
if the orbit $T \cdot x$ is contractible (which means that its inclusion map 
is null-homotopic), then the orbit map $\rho_x$ is null-homotopic.
Thus, since $M$ is nonempty, \eqref{some-contr}$\implies$\eqref{some-null}.

Because $M$ is connected, any two orbit maps are homotopic.
Thus, \eqref{some-null}$\implies$\eqref{all-null}. 

For a free orbit, the orbit map $\rho_x$ is null-homotopic
if and only if the orbit is contractible.
Thus, \eqref{all-null} $\implies$\eqref{free-contr}.

Because $M$ is connected and the action is faithful,
the union of the free orbits is dense
(as a consequence of Koszul's slice theorem).
Because $M$ is nonempty, there exist free orbits. 
Thus, \eqref{free-contr}$\implies$\eqref{some-contr}.
\end{proof}

\begin{figure}[ht]
    \centering
    \begin{tikzpicture}[
        double distance=1.2pt,
        line width=0.4pt,
        node distance=3.5cm and 4.5cm 
    ]
        \node (formal) at (0,0) {\shortstack{equivariantly formal\\(e.g., Hamiltonian)}};
        \node (fixed) at (12,0) {\shortstack{$\exists$ fixed\\points}};
        
        \node (all_orb) at (0,-3) {\shortstack{all orbits are\\contractible}};
        \node (free_orb) at (6,-3) {\shortstack{free orbits are\\contractible}};
        \node (some_orb) at (12,-3) {\shortstack{some orbits are\\contractible}};
        
        \node (all_maps) at (6,-6) {\shortstack{all orbit maps\\nullhomotopic}};
        \node (some_maps) at (12,-6) {\shortstack{some orbit maps\\nullhomotopic}};

        \draw[->, double] (formal) -- node[above, font=\small] {Ginzburg--Guillemin--Karshon \cite[Prop. 3.24]{ggk_abstract}} (fixed);
        \draw[->, double] (formal) -- node[left, font=\small] {\cref{hamiltonian_has_contractible}} (all_orb);
        \draw[->, double] (fixed) -- (some_orb);
        
        \draw[->, double] (all_orb) -- (free_orb);
        \draw[->, double] (free_orb) -- node[above, font=\small] {\cref{lem:contractible}} (some_orb);
        \draw[->, double] (some_orb) -- node[right, font=\small] {\cref{lem:contractible}} (some_maps);
        \draw[->, double] (some_maps) -- node[below, font=\small] {\cref{lem:contractible}} (all_maps);
        \draw[->, double] (all_maps) -- node[left, font=\small] {\cref{lem:contractible}} (free_orb);

    \end{tikzpicture}
    \caption{Summary of logical implications between topological properties of a smooth torus action.}
    \label{fig:orbit_implications}
\end{figure}

\begin{remark}
    All of the arrows in Figure~\ref{fig:orbit_implications}, that are not marked with ``\cref{lem:contractible}'', are not ``if and only if''s:
    \begin{enumerate}
        \item The standard action of $S^1$ on $S^3$ has no fixed points, but all of its orbits are contractible. Hence, ``some orbits are contractible'' does not imply ``$\exists$ fixed points'' in general. Similarly, ``all orbits are contractible'' does not imply ``equivariantly formal'' in general, because this would also imply ``$\exists$ fixed points''.
        \item Example~\ref{example_fixed_non_contractible} describes an $S^1$ action with fixed points (and with contractible free orbits), but such that not all of the orbits are contractible. Hence, ``free orbits are contractible'' does not imply ``all orbits are contractible'' in general. Similarly, ``$\exists$ fixed points'' does not imply ``equivariantly formal'' because that would also imply ``all orbits are contractible''.
    \end{enumerate}
\end{remark}
In \cite[Remark 1.8]{oprea_walsh}, Oprea and Walsh describe a non-faithful
circle action with null-homotopic orbit maps and non-contractible orbits.
In the following, we give an example of a \textbf{faithful} circle action
with null-homotopic orbit maps, but such that some of its orbits are
non-contractible. The example is constructed by taking an equivariant
connected sum between $S^2\times\mathbb{T}^2$ and $S^4$, around a free
orbit. This means extracting a neighbourhood $D^3 \times S^1$ of a free
orbit in each of the manifolds, and equivariantly gluing the boundaries,
where each boundary is diffeomorphic to $S^2 \times S^1$. For equivariant
connected sums, see also~\cite{grossberg_karshon_equivariant}.
\begin{example}\label{example_fixed_non_contractible}
The following is an example of a circle action, with fixed points, 
such that some of its orbits are non-contractible. Let $S^1$ act 
on $S^2 \times \mathbb{T}^2$ by rotating $S^2 \subset \mathbb{R}^3$ 
around the $z$ axis once, and rotating 
$\mathbb{T}^2 := \mathbb{R}^2/\mathbb{Z}^2$ around one of its coordinates 
twice. Explicitly, in cylindrical coordinates $h, \theta$ on $S^2$, 
and coordinates $p, q \mod 1$ on $\mathbb{T}^2$, the action can be described 
by
\begin{equation*}
        e^{it} \cdot (h, \theta, p, q) = (h, \theta + t, p + t/\pi, q),
\end{equation*}
where $e^{it}$ is the coordinate of $S^1 \subset \mathbb{C}$. This action 
is not free since every point that projects to the north or south pole 
on the sphere is stabilized by the subgroup 
$\mathbb{Z}_2 \subset S^1$. Let $x_0 := (h_0, \theta_0, p_0, q_0)$ 
be a point in $S^2 \times \mathbb{T}^2$. 
The fundamental group $\pi_1(S^2 \times \mathbb{T}^2, x_0)$
can be identified with $\mathbb{Z}^2$, with the generators $[\gamma_q]$
and $[\gamma_p]$, 
where $\gamma_q, \gamma_p \colon [0, 2\pi] \rightarrow S^2 \times \mathbb{T}^2$ 
are given by
\begin{equation*}
        \gamma_p(t) = (h_0, \theta_0, p_0 + t/2\pi, q_0), \quad \gamma_q(t) = (h_0, \theta_0, p_0, q_0 + t/2\pi).
\end{equation*}
Every free $S^1$-orbit represents the class $2[\gamma_p]$, and every
non-free $S^1$-orbit with stabilizer $\mathbb{Z}_2$ represents the class
$[\gamma_p]$.

Let $S^1$ act on $S^4 \subset \mathbb{R}^3 \times \mathbb{R}^2$ by rotating 
the $\mathbb{R}^2$ component. By performing an equivariant connected sum 
between $S^2 \times \mathbb{T}^2$ and $S^4$, along a free orbit, and away from 
$x_0$, we obtain a four-dimensional orientable $S^1$-manifold $M$ which has 
fixed points. By the Seifert-Van Kampen theorem, the fundamental group 
$\pi_1(M, x_0)$ of $M$ is isomorphic to the fundamental group of 
$S^2 \times \mathbb{T}^2$, with the extra relation that the free $S^1$-orbits 
are contractible, i.e., that $2[\gamma_p] = 0$. This is true since $S^4$ 
is simply-connected, together with the fact that for every four-manifold $N$, 
the fundamental group $\pi_1(N)$ does not change if we remove an embedded 
$D^3 \times S^1$, i.e., $\pi_1(N) \cong \pi_1(N\setminus(D^3 \times S^1))$. 
In particular, $\pi_1(M, x)$ is isomorphic to 
$\mathbb{Z}_2 \times \mathbb{Z}$, and every non-free $S^1$-orbit 
of $S^2\times \mathbb{T}^2$ represents the class $(g , 0)$ 
in $\pi_1(M, x) \cong \mathbb{Z}_2 \times \mathbb{Z}$, where $g$ is the 
generator of $\mathbb{Z}_2$. Hence, the non-free $S^1$-orbits are 
non-contractible in $M$.
\end{example}

Next, we prove \cref{non_free_torus_bundle_lemma}, which is needed for the proof of \cref{complexity_one_torsion_theorem} in \cref{proof_of_main_thm_section}. The lemma states that the orbits of a locally-free torus action, and their iterates, are non-contractible provided that the real Chern class of the action vanishes.
\begin{lemma}\label{non_free_torus_bundle_lemma}
    Let $T$ act faithfully on a closed connected smooth manifold $M$. Assume that the action is locally-free, and that the real Chern class $c_1$, given by Definition~\ref{real_chern_class_definition}, vanishes in $H_{\basic}^2(M, \t)$. Then for every $x \in M$, the orbit $T \cdot x$ is non-contractible in $M$. Furthermore, for every subcircle $C \subset T$, the orbit $C \cdot x$ and all of its iterates are non-contractible in $M$.
\end{lemma}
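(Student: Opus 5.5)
Since $c_1 = [d\alpha]_{\basic}$ vanishes in $H^2_{\basic}(M;\t)$, I will first modify the connection. Choose a basic $\t$-valued one-form $\beta$ with $d\alpha = d\beta$. Then $\alpha' := \alpha - \beta$ is still a connection one-form, because $\beta$ is horizontal and invariant, so $\alpha'(\xi_M)=\xi$ and $\mathcal{L}_{\xi_M}\alpha' = 0$. Moreover $\alpha'$ is closed. So from now on I assume $d\alpha = 0$.

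Each component of $\alpha$ is then a closed real one-form on $M$. Integration gives a homomorphism $\Phi \colon \pi_1(M) \to H_1(M;\Z) \to \t$, $[\gamma]\mapsto \int_\gamma \alpha$, which depends only on the homotopy (indeed homology) class of $\gamma$. The key computation is on an orbit. Fix $x\in M$ and a subcircle $C\subset T$, generated by a primitive $\xi \in \t_\Z$. The stabilizer $C_x$ is finite, say of order $k$, so the orbit $C\cdot x$ is parametrized once by $\gamma(t) = \exp(t\xi/k)\cdot x$, $t\in[0,1]$. Its velocity is $\tfrac1k \xi_M$, so $\int_\gamma \alpha = \tfrac1k \xi \neq 0$. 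The $j$-th iterate gives $\tfrac{j}{k}\xi \neq 0$ for $j \neq 0$. Since a null-homotopic loop has zero integral, neither $C\cdot x$ nor any nontrivial iterate is contractible.

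For the full orbit $T\cdot x \cong T/T_x$ with $T_x$ finite, I argue by contradiction. If the inclusion $T\cdot x\hookrightarrow M$ were null-homotopic, then every loop in $T\cdot x$, in particular $C\cdot x$ for any subcircle $C$, would be contractible in $M$, contradicting the previous step. Here $\dim T\ge 1$, which holds implicitly; if $T$ is trivial the statement is vacuous or degenerate. Faithfulness is not really needed beyond this.

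The main step is the first one: justifying that a basic primitive $\beta$ exists. This is exactly the meaning of vanishing in basic cohomology, where the primitive is taken in the basic complex, so it is immediate. Some care is needed only in checking that basic forms annihilate $\xi_M$, so that the orbit integral is unaffected by the correction.
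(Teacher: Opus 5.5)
Your proof is correct and follows the paper's route: you use a basic primitive $\beta$ to replace $\alpha$ by the closed connection form $\alpha-\beta$, and then detect orbits by integrating this closed $\t$-valued form. The only difference is that you integrate directly over the once-traversed loop $C\cdot x$, making the factor $1/k$ from the finite stabilizer explicit. The paper instead pulls back along the orbit map $\rho_x$ and concludes that $(\rho_x)_*$ is injective on $H_1(T;\Z)$, leaving the passage to non-free orbits and their iterates implicit.
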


\begin{proof}
    Choose a connection one-form $\alpha \in \Omega^1(M, \t)$ for the $T$-action, and let $c_1 := [d\alpha]_{\basic} \in H^2_{\basic}(M, \t)$ be the real Chern class of the $T$-action, given by \cref{real_chern_class_definition}. By assumption, $[d\alpha]_{\basic} = c_1 = 0$, so there exists a basic one-form $\gamma \in \Omega_{\basic}^1(M, \t)$ such that $d\alpha = d\gamma$. We define $\t$-valued one-form $\tilde \alpha$ on $M$ by
    \begin{equation*}
        \tilde \alpha := \alpha - \gamma,
    \end{equation*}
    which is closed since
    $$d \tilde \alpha = d\alpha - d\gamma = d\alpha - d\alpha = 0.$$
    Moreover, since $\gamma$ is basic, the one-form $\tilde \alpha$ is also a connection one-form.

    Let $x$ be a point in $M$, and let $\rho_x \colon T \rightarrow M$ be its orbit map defined by $t \mapsto t \cdot x$. Because $\tilde \alpha$ is a connection one-form, its pullback $\rho_x^*{\tilde \alpha}$ to the torus $T$ is exactly the tautological one-form. In particular, the integral of $\rho_x^*\tilde \alpha$ over every subcircle of $T$ is non-zero, and therefore the induced map on singular homology ${(\rho_x)}_* \colon H_1(T, \mathbb{Z}) \rightarrow H_1(M, \mathbb{Z})$ is injective, and the lemma follows.
\end{proof}
When $\dim M = 3$ and $\dim T = 1$, \cref{non_free_torus_bundle_lemma} is equivalent to the claim that the orbits of a Seifert manifold are of infinite order in the fundamental group, whenever the Seifert Euler number (\cref{equation_euler_number}) vanishes. This was proved by Orlik in \cite[Section 5.3]{orlik_book} using the description of the fundamental group of a Seifert manifold (Equation~\eqref{fund_group}). In Appendix~\ref{seifert_lemma_appendix}, we describe the proof of the main theorems using the language of Seifert manifolds. For convenience, we also reprove the theorem of Orlik in~\cref{seifert_lemma}.

For the rest of this section, we will focus on symplectic actions.
For Hamiltonian actions, there are no examples such as \cref{example_fixed_non_contractible}: the orbits are always 
contractible. This resolves a question of Oprea and Walsh~\cite{oprea_walsh} 
in the case of abelian actions, and proves that Corollaries~6.7 and~6.8 
in their paper hold for all Hamiltonian torus actions.

\begin{lemma}\label{hamiltonian_has_contractible}
Let $M$ be a compact equivariantly formal $T$-manifold (e.g., a closed 
Hamiltonian $T$-manifold by \cref{cor:ham_iff_eqformal}). Then the orbits of the $T$-action are contractible.
\end{lemma}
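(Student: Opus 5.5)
The plan is to reduce contractibility of an arbitrary orbit to \cref{lem:contractible}, applied not on $M$ itself but on a suitable invariant submanifold in which the orbit is a \emph{free} orbit of a faithful torus action that has fixed points.

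Fix $x \in M$ and let $H \subset T$ be its stabilizer, so that $T\cdot x \cong T/H$. Let $N$ be the connected component of the fixed-point set $M^H$ that contains $x$. Then $N$ is a closed connected $T$-invariant submanifold, and the quotient torus $T/H$ acts on $N$. On $N$ we pass to the effective quotient $T/H'$, where $H' \supseteq H$ is the kernel of the $T$-action on $N$. Since $x \in N$ has stabilizer exactly $H$, we get $H' = H$. So $T/H$ acts faithfully on $N$, and $x$ lies on a free orbit of this action.

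Now suppose $N$ contains a $T$-fixed point $p$. Then the orbit map of $p$ for the $T/H$-action on $N$ is constant. By \cref{lem:contractible}, applied to the faithful $T/H$-action on the connected manifold $N$, all orbit maps are null-homotopic and all free orbits are contractible in $N$. In particular $T\cdot x$ is contractible in $N$, and hence in $M$.

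So everything reduces to the following claim: \emph{for every closed subgroup $H\subset T$, every connected component $N$ of $M^H$ contains a $T$-fixed point}. This is the main obstacle, and I would prove it from equivariant formality in two steps.

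First, the case where $H$ is a subtorus. Work with real coefficients. Equivariant formality gives $\sum_i b_i(M^T) = \sum_i b_i(M)$. On the other hand, Borel localization, applied to $T$ acting on $M^H$ and then to $H$ acting on $M$, gives
\[
\textstyle\sum b_i(M^T) \le \sum b_i(M^H) \le \sum b_i(M).
\]
Hence equality holds throughout, and it holds componentwise as well, since each inequality is a sum of inequalities over the components. Thus each component $N$ of $M^H$ is $T$-equivariantly formal with $\sum b_i(N^T) = \sum b_i(N) \ge 1$, so $N^T \neq \emptyset$.

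Second, the general case $H = H_0 \times F$, with $H_0$ a subtorus and $F$ finite. Replace $M$ by the component of $M^{H_0}$ containing $x$; by the first step it is again equivariantly formal. This reduces us to the case where $H$ is finite, and there real-coefficient localization gives no information. Here I would first try an argument through equivariant cohomology that avoids Betti-number counting. Equivariant formality makes $H_T^*(M)$ a free $H^*(BT)$-module, so by Borel's localization theorem the restriction $H_T^*(M)\to H_T^*(M^T)$ is injective. If a component $N$ of $M^F$ had no $T$-fixed points, I would then look for a nonzero class in $H_T^*(M)$ supported near $N$ (for instance built from an equivariant Thom form of $N$, or a torsion class coming from the locally-free $T/F$-action near $N$) whose restriction to $M^T$ vanishes, contradicting injectivity. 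If that fails, the fallback is to use the fact that all orbits of a single $T$-manifold are tied together through the dense set of free orbits: the orbit $T\cdot x$ is a quotient of nearby free orbits $T/F$-coverings, so one can try to compare the class of the orbit $T/F \hookrightarrow M$ with the null-homotopic free-orbit map, and use the vanishing of torsion phenomena forced by equivariant formality, in the spirit of \cref{example_fixed_non_contractible}, where the failure of contractibility comes exactly from $\mathbb{Z}_2$-stabilizers and non-formality.
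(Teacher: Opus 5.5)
Your reduction is correct and matches the paper's. The paper also passes to the component $Y$ of $M^H$ through $x$ and null-homotopes $T/H\to M$ by sweeping a path in $Y$ from $x$ to $M^T$ with the $T/H$-action. Your appeal to \cref{lem:contractible} for the faithful $T/H$-action on $N$ amounts to the same thing. Your Betti-number argument for $H$ a subtorus is correct, and so is the reduction from $H=H_0\times F$ to finite $F$. The paper does not prove the key claim ``every component of $M^H$ meets $M^T$''; it cites Ginzburg--Guillemin--Karshon for it.

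The genuine gap is the finite case, where you only list strategies. With real coefficients no strategy can succeed there. Your first idea fails for a structural reason. If a component $N$ of $M^F$ has $N^T=\emptyset$, then $H_T^*(N)$ is $H^*(BT)$-torsion by localization. Hence every class in the image of $H_T^*(M,M\setminus N)\to H_T^*(M)$ is torsion too, and so vanishes in a free module. No class ``supported near $N$'' can be nonzero.

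In fact the claim is false. Let $S^1$ act on $L(p,q)=S^3/\Z_p$, where $\zeta\cdot(z_1,z_2)=(\zeta z_1,\zeta^q z_2)$ and $p\ge 2$, by $t\cdot[z_1,z_2]=[z_1,tz_2]$.
\begin{itemize}
\item The fixed set is the circle $\{z_2=0\}/\Z_p$. Since $L(p,q)$ is a rational homology sphere, $\dim H^*(L^{S^1};\R)=2=\dim H^*(L;\R)$, so the action is equivariantly formal over $\R$.
\item The orbit $\{z_1=0\}/\Z_p$ has stabilizer $\Z_p$. It is an entire component of $L^{\Z_p}$ and contains no fixed point.
\item Its inclusion represents a generator of $\pi_1(L)\cong\Z_p$, so this orbit is non-contractible.
\end{itemize}
So the lemma as stated (real-coefficient formality, no symplectic form) is false. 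The paper's proof, which takes the claim for arbitrary $H$ from the citation, breaks at exactly the point where yours stops.

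What does hold, and is all the paper needs, is the Hamiltonian case. This includes symplectic formal actions, by \cref{lem:ham_if_eqformal}.
\begin{itemize}
\item Each component $N$ of $M^H$ is a compact $T$-invariant symplectic submanifold, and $\mu|_N$ is a momentum map for the $T$-action on $N$.
\item Take $\xi\in\t$ generating a dense one-parameter subgroup, and let $p$ be a maximum of $\langle\mu,\xi\rangle|_N$. Nondegeneracy of $\omega|_N$ gives $\xi_N(p)=0$, so $p$ is fixed by the closure of $\exp(\R\xi)$, which is $T$. Thus $p\in N\cap M^T$.
\end{itemize}
This argument handles every closed $H$ at once and replaces both of your steps. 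With it, your reduction gives a complete proof for Hamiltonian actions.
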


\begin{proof}
Let $x$ be a point in $M$. We wish to show that $T \cdot x$ is contractible 
in $M$ as a submanifold. Let $H$ be the stabilizer of $x$, let 
$\rho_x \colon T \rightarrow M$ be the orbit map of $x$, and let 
$\overline{\rho_x} \colon  T/H \rightarrow M$ be the induced map. Then $T \cdot x$ 
is contractible if and only if $\overline{\rho_x}$ is null-homotopic. 
Let $M^H$ be the set of points that are fixed by $H$, and let $Y \subset M^H$
be the connected component of $x$ in $M^H$.

Since $M$ is equivariantly formal, the intersection of $Y$ and the fixed
point set $M^T$ is non-trivial (see~\cite[Proposition 3.24]{ggk_abstract}
or~\cite[Proposition C.28]{ggk_book}). Let $\gamma \colon [0, 1] \rightarrow Y$
be a path starting at $x$ and ending in $M^T$. Then the action of $T/H$ on
this path defines a homotopy of the map 
$\overline{\rho_x} \colon T/H \rightarrow M$ to a point, and thus the orbit 
$T \cdot x$ is contractible in $M$.
\end{proof}

The conclusion of \cref{hamiltonian_has_contractible} is not true for
non-abelian Hamiltonian actions. For instance, the standard action of $SO(3)$
on $S^2$ has one orbit, which is non-contractible.

\begin{remark}
It is natural to ask whether there exists a symplectic circle action such
that some but not all of its orbits are contractible;
see \cref{fixed_non_contractible_question} and \cref{some_non_contractible_question}.
The $S^1$-manifold of Example~\ref{example_fixed_non_contractible}
does not give such an example since it does not admit an invariant
symplectic form. Indeed, since it has
non-contractible orbits, then by \cref{hamiltonian_has_contractible}
the action cannot be Hamiltonian, and since it has fixed points,
then by McDuff~\cite{mcduff_circle_actions} it cannot be symplectic
non-Hamiltonian.
\end{remark}

For symplectic torus actions with at least one non-isotropic orbits,
all of the orbits must be non-contractible:

\begin{lemma}\label{non_isotropic_non_contractible_lemma}
Let a torus $T$ act symplectically on a closed connected symplectic
manifold $(M, \omega)$. Assume that there exists at least one
non-isotropic orbit. Then all of its orbits are non-contractible.
\end{lemma}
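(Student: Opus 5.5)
The idea is to detect every orbit in homology by integrating a closed one-form over it. By \cref{cor:sigma_constant}, the two-form $\omega_T = \rho_x^*\omega$ on $T$ has constant coefficients and does not depend on $x$. Since some orbit is non-isotropic, $\omega_T \neq 0$, so there are $\xi,\eta \in \t$ with $c := \omega(\xi_M,\eta_M) \neq 0$, and this holds at every point of $M$ by \cref{lem:independent_form}. The action is symplectic, so $\beta := \iota_{\xi_M}\omega$ is a closed one-form on $M$.

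The key computation concerns the circle-direction orbit map. Restrict $\rho_x$ to the one-parameter subgroup through $\eta$, or better to a closed loop: choose $\eta$ in the integral lattice $\t_{\mathbb Z}$, which is possible because the condition $\omega_T(\xi,\cdot)\neq 0$ is open and nonempty in $\eta$. Then $t\mapsto \exp(t\eta)\cdot x$, $t\in[0,1]$, is a loop $\gamma_x$ in the orbit $T\cdot x$, and
\begin{equation*}
\int_{\gamma_x}\beta=\int_0^1 \omega(\xi_M,\eta_M)\,dt = c \neq 0 .
\end{equation*}
Because $\beta$ is closed, this shows that $\gamma_x$ is non-zero in $H_1(M;\R)$, so $\gamma_x$ is not null-homotopic in $M$.

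It remains to pass from this loop to the orbit. The loop $\gamma_x$ factors through the inclusion $T\cdot x \hookrightarrow M$, being the image of a loop in the orbit. If the inclusion were null-homotopic, then $\gamma_x$ would be null-homotopic and $\int_{\gamma_x}\beta$ would vanish, a contradiction. So every orbit, including orbits with nontrivial stabilizer, is non-contractible.

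The only subtle point is this last factorization. Non-free orbits have $T\cdot x \cong T/H$, and one has to make sure the loop really lies in the orbit so that the homotopy of the inclusion applies. It does, by construction, and a fixed point cannot occur since $c\neq 0$ forces $\xi_M,\eta_M\neq 0$ everywhere. So I expect no serious obstacle beyond choosing $\eta$ integral.
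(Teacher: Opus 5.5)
Your proof is correct, but it works in degree one, whereas the paper works in degree two.

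The paper's proof notes that $\rho_x^*\omega$ is a nonzero translation-invariant two-form on $T$. Such a form is not exact, so $\rho_x^*[\omega]\neq 0$ in $H^2(T;\R)$ and $\rho_x$ is not null-homotopic. Since $\rho_x$ factors through the inclusion $T\cdot x\hookrightarrow M$, the orbit is non-contractible. You instead pair the closed flux form $\iota_{\xi_M}\omega$ with a one-parameter-subgroup loop lying in the orbit.

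What each approach buys:
\begin{itemize}
\item The paper's route is shorter. It needs no choice of an integral direction and no check that the loop lies in the orbit.
\item Your route costs a little bookkeeping but gives a sharper, circle-level conclusion. For any $\eta\in\t_\Z$ with $\omega_T(\cdot,\eta)\neq 0$, the orbits of the subcircle $\exp(\R\eta)$ and all their iterates are nonzero in $H_1(M;\R)$, since the integral over $\gamma_x^m$ is $mc$. Hence they are non-torsion in $\pi_1(M)$. This is the kind of statement made in \cref{complexity_one_torsion_theorem} and \cref{non_isotropic_conjecture}. As a by-product, it also shows that $\xi_M$ is non-Hamiltonian.
\end{itemize}

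One small repair is needed in your choice of $\eta$. ``Open and nonempty'' alone does not guarantee a lattice point, because $\t_\Z$ is discrete. What you actually use is that $\{\eta : \omega_T(\xi,\eta)=0\}$ is a hyperplane, so it cannot contain $\t_\Z$, which spans $\t$. More simply, take $\xi,\eta$ from a $\Z$-basis of $\t_\Z$: the nonzero bilinear form $\omega_T$ must be nonzero on some pair of basis vectors.
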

\begin{proof}
    By \cref{cor:sigma_constant}, all of the orbits are non-isotropic.
    Let $x$ be a point in $M$, and let $\rho_x:T \rightarrow M$ be its orbit map.
    Since $T \cdot x$ is non-isotropic, the pullback $\rho_x^* \omega$ is a non-zero $T$-invariant two-form on $T$, and thus its cohomology class $[\rho_x^* \omega]$ on $H^2(T, \mathbb{R})$ is non-zero, so $\rho_x$ cannot be nullhomotopic,
    and in particular the orbit $T \cdot x$ is non-contractible.
\end{proof}


\section{The Splitting Lemma} \ 
\label{splitting_lemma_section}
In this section, we prove \cref{infinitesimal_splitting_lemma},
which is an infinitesimal version of the Splitting Lemma
(\cref{splitting_lemma}). We then deduce \cref{splitting_lemma}. 
First, we give the following definition.

\begin{definition}\label{def:purely_non_ham}
Let a torus $T$ with Lie algebra $\t$ act on a symplectic manifold 
$(M, \omega)$.
The action is \textbf{purely non-Hamiltonian} if for every 
$\xi \in \t \setminus \{0\}$, 
the corresponding vector field $\xi_M$ is non-Hamiltonian.
\end{definition}
In fact, a torus action is purely non-Hamiltonian if and only if 
its maximal Hamiltonian Lie subalgebra $\t_{\ham}$ 
(\cref{maximal_hamiltonian_subalgebra})
is equal to $\{0\}$, if and only if it is cohomologically-free
in the sense of Ginzburg (see~\cite[Section 2]{ginzburg_symplectic_actions}). See also Lalonde--McDuff--Polterovich \cite[Corollary 2.2]{lalonde_mcduff_polterovich}.

\begin{lemma}[Infinitesimal Splitting Lemma]
\label{infinitesimal_splitting_lemma}
Let a torus $T$ act symplectically on a closed connected symplectic manifold
$(M,\omega)$. Suppose that $\dim M = 2n$ and $\dim T \geq n-1$,
and that the orbits are isotropic.
Let $\t_{\ham}$ be the maximal Hamiltonian Lie subalgebra
(\cref{maximal_hamiltonian_subalgebra}).

Then for every point in $M$ with stabilizer $H$ with Lie subalgebra $\h$,
we have $\h \subset \t_{\ham}$.
Moreover, every purely non-Hamiltonian subtorus action is locally-free.
\end{lemma}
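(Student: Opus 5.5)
The strategy is to lift the problem to a covering space on which the action becomes Hamiltonian with a proper momentum map. There, log-concavity of the Duistermaat--Heckman function for complexity one spaces, combined with periodicity coming from the group of periods $P$, forces the Duistermaat--Heckman function to have no walls in the non-Hamiltonian directions.

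First, some reductions. Isotropic orbits of a faithful action have dimension at most $n$, so $\dim T\in\{n-1,n\}$. The case $\dim T=n$ (Lagrangian orbits) is covered by Benoist's result for coisotropic orbits, so we focus on $\dim T=n-1$, although the argument below works for both.

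The second claim follows from the first. If $S\subset T$ is a purely non-Hamiltonian subtorus with Lie algebra $\mathfrak s$, then the stabilizer of $x$ in $S$ has Lie algebra $\h\cap\mathfrak s\subset\t_{\ham}\cap\mathfrak s=\{0\}$. Hence it is discrete, and the $S$-action is locally-free.

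For the first claim, I will first use the perturbation of Appendix~\ref{perturbation_appendix} to replace $\omega$ by an invariant form with rational, hence after scaling integral, class. This keeps the orbits isotropic and does not change $\t_{\ham}$ or the stabilizers, so $P$ becomes discrete. By \cref{lem:split_basis} and \cref{lemma:isotropic_invariant}, we then get a $T$-invariant cylinder-valued momentum map $\mu\colon M\to\t^*/P\cong\t_{\ham}^*\times\t_c^*/P$.

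Next, let $\tilde M\to M$ be the covering obtained by pulling back $\t^*\to\t^*/P$ along $\mu$. The lifted $T$-action on $\tilde M$ is Hamiltonian with momentum map $\tilde\mu\colon\tilde M\to\t^*$. This map is proper, because $M$ is compact and $P$ acts cocompactly on $\t_c^*$, and it is $P$-equivariant for the deck transformations. The image $\tilde\mu(\tilde M)$ is a connected $P$-invariant set; by the convexity theorem for proper momentum maps, it is convex.

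So $(\tilde M,\tilde\omega,\tilde\mu,\t^*)$ is a complexity one space, whose Duistermaat--Heckman function $f$ has the following properties:
\begin{enumerate}
\item it is continuous;
\item it is piecewise affine (degree $\le 1$) on chambers;
\item it is log-concave on the image (the log-concavity theorem for complexity one);
\item it is invariant under translations by $P$.
\end{enumerate}
Restrict $f$ to any line $\ell$ in the image parallel to a direction $v\in\on{span}_\R P=\t_{\ham}^0$. Then $\log f|_\ell$ is concave and periodic, so it is constant. Hence $f$ is constant along $\ell$; in particular, $f|_\ell$ has no kinks.

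Now suppose that $x\in M$ has stabilizer $H$ with $\h\not\subset\t_{\ham}$. Choose a rational $\xi\in\h\setminus\t_{\ham}$, let $C$ be the circle it generates, and lift $x$ to $\tilde x\in\tilde M$. Let $F$ be the component of $\tilde M^C$ through $\tilde x$. Its image $\tilde\mu(F)$ lies in an affine hyperplane $y+\xi^0$; this is a wall, or part of the boundary of the image.

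Since $\xi\notin\t_{\ham}=(\t_{\ham}^0)^0$, there is a $v\in\t_{\ham}^0$ with $\langle v,\xi\rangle\neq0$. So the lines parallel to $v$ cross this hyperplane transversally.

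It remains to show that $f$ restricted to such a line $\ell$ through a generic point of $\tilde\mu(F)$ is not affine there. Two cases arise.
\begin{enumerate}
\item If $\tilde\mu(F)$ is on the boundary of the image, then $\ell$ leaves the image, which contradicts $P$-invariance in the $v$-direction.
\item If $\tilde\mu(F)$ is an interior wall, I will use the wall-crossing formula, namely the jump of $c_1$ in \eqref{dh_variation_equation} across the wall. The jump of the derivative of $f$ in the $v$-direction is computed by localization at $F$: it equals $\langle v,\xi\rangle$ times a nonzero factor, coming from the reduced volume of $F$ divided by the product of the weights of $C$ on its normal bundle. This contradicts the constancy of $f$ along $\ell$.
\end{enumerate}

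The main obstacle is this last step: showing rigorously that the kink across a wall created by a non-Hamiltonian stabilizer direction is genuinely nonzero in complexity one. This includes the degenerate situation where the normal weights have mixed signs, or $F$ is small. I would first try the explicit Hamiltonian local model of \cref{local_normal_form_theorem} near $\tilde x$, for which the Duistermaat--Heckman function can be computed directly. A further point to check is that the chosen $\omega$-perturbation does not create spurious Hamiltonian directions.
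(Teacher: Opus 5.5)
\textbf{Overall.} Your route is essentially the paper's, in different packaging. The paper works with the Duistermaat--Heckman density directly on the cylinder $\t^*/P$ and goes once around a ray in a direction of $P$. You instead pass to the cover $\tilde M$ (connected, since $P$ is the full period group) and use that a log-concave function which is periodic along a line is constant. Deducing local freeness from the stabilizer statement, instead of the reverse, makes no difference.

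\textbf{The gap.} It is exactly the step you flag yourself. Knowing that $f$ is constant along lines in $\t_{\ham}^0$-directions says nothing about stabilizers unless every interior wall transverse to such a line produces a \emph{strictly} nonzero jump in $\partial_v f$. Plain log-concavity is fully compatible with a wall across which the slope does not change, and your case (2) only asserts the nonzero factor. This strict inequality is the heart of the proof. The paper supplies it by citing Graham's argument, which rests on the Guillemin--Lerman--Sternberg jump formula and gives, in complexity one, $\partial_\xi f(y_+)<\partial_\xi f(y_-)$ for every $\xi$ not parallel to the wall.

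\textbf{How to fill it.} Your local-model plan can close the gap, but it has to be set up carefully.
First, $\tilde\mu(F)$ need not be a wall at all; for instance, $F$ may be an isolated $T$-fixed point. So use \cref{local_normal_form_theorem} to move to a nearby point whose stabilizer has identity component a circle $C'$ with Lie algebra $\R\eta$, where $\eta\notin\t_{\ham}$. Such points exist because the normals in $\h$ to hyperplanes spanned by subsets of the slice weights span $\h$. At that point the slice is $\mathbb{C}^2$ with $C'$-weights $a,b$.
Second, the convex, $P$-invariant image is invariant under all translations by $\t_{\ham}^0$. The local cone $\eta^0+\mu_V(\mathbb{C}^2)$ has this property only if $ab<0$; this is the correct form of your case (1).
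Third, pushing Lebesgue measure forward by $\frac12(a|z_1|^2+b|z_2|^2)$ shows that the slope in any direction $v$ with $\langle v,\eta\rangle\neq 0$ drops by a positive multiple of $|\langle v,\eta\rangle|/|ab|$. Over a generic point of the wall, every $C'$-fixed orbit in the fiber contributes a drop of the same sign, so nothing cancels.

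\textbf{Perturbation.} \cref{perturbation_lemma} does not say that $\t_{\ham}$ is unchanged. This does hold for sufficiently small perturbations, by lower semicontinuity of the rank of the period map $H_1(M;\R)\to\t^*$. It is simpler to do as the paper does: first fix a circle in $H$ not contained in $\t_{\ham}$, and use the lemma only to keep that circle non-Hamiltonian.
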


We will prove \cref{infinitesimal_splitting_lemma} in a moment.
The proof is a higher-dimensional analogue of McDuff's proof 
that in dimension four a non-Hamiltonian symplectic circle action cannot have 
fixed points (\cite[Proposition 2]{mcduff_circle_actions}).
It is based on examining the Duistermaat-Heckman measure with respect 
to a cylinder-valued momentum map, and it uses 
the log-concavity of the Duistermaat-Heckman measure for complexity one actions (see~\cite{complexity_one_dh_concavity, graham, pelayo_lin}).
Note that for torus actions of higher complexity, the Duistermaat-Heckman measure is not necessarily
log-concave (see~\cite{karshon_dh} and~\cite{lin_dh}).
\begin{proof}[Proof of \cref{infinitesimal_splitting_lemma}]
    Let $K$ be a subtorus of $T$, whose action is purely non-Hamiltonian. We will show that the action of $K$ is locally-free. Without loss of generality,
    we will assume that $\dim T = n-1$; the case $\dim T \ge n$ then follows easily by restricting to subtori.

    It is enough to show that the action of every subcircle of $K$ is locally-free, to deduce that the action of $K$ is locally-free. Let $C \subset K$ be a subcircle. By assumption, $\k \cap \t_{\ham} = \{0\}$, hence the action of $C$ is non-Hamiltonian. By \cref{perturbation_lemma}, we can replace $\omega$ with an integral $T$-invariant symplectic form, such that $C$ still acts in a non-Hamiltonian fashion. By abuse of notation, we denote this integral symplectic form by $\omega$, and its corresponding maximal Hamiltonian subalgebra by $\t_{\ham}$. We choose a subtorus $T_c$ which contains $C$ as a subcircle, and such that its Lie subalgebra $\t_c$ is complementary to $\t_{\ham}$. We will show that the $T_c$-action is locally-free, and therefore so is the action of $C$, and deduce that the $K$-action is locally-free.

    Let $\t_{\ham}^*, \t_c^*$ be the duals of $\t_{\ham}, \t_c$, respectively. Since $\omega$ is integral, then $P$ is discrete, and we can choose a cylinder-valued momentum map $\mu \colon M \rightarrow \t^*/P$. Applying \cref{lem:split_basis}, we have the identification
    \begin{equation*}
        \t^*/P \cong \t_{\ham}^* \times (\t_c^*/P) \cong \mathbb{R}^k \times\mathbb{T}^{n-k-1}.
    \end{equation*}
    By assumption, the orbits are isotropic, and therefore the momentum map is $T$-invariant (see \cref{lemma:isotropic_invariant}). Furthermore, by \cref{connected_level_sets_lemma}, the level sets of $\mu$ are connected.
    
    Let $f \colon \t^*/P \rightarrow \mathbb{R}_{\ge 0}$ be the Duistermaat--Heckman density function, let $y$ be a point in $\t^*/P$, and let $\xi \in \t^*$ be an element in the dual of the Lie algebra. We define $\partial_{\xi} f(y_+)$ and $\partial_{\xi} f(y_-)$ by
    \begin{equation*}
        \partial_{\xi} f(y_+) = \lim_{t \rightarrow 0_+} \frac{f(y + t\xi) - f(y)}{t},\quad
        \partial_{\xi} f(y_-) = \lim_{t \rightarrow 0_-} \frac{f(y + t\xi) - f(y)}{t}.
    \end{equation*}
    In~\cite[Section 3]{graham}, which relies on a formula of Guillemin--Lerman--Sternberg given in \cite[Theorem 5.2]{gls1}, Graham shows that for every $y$ in a codimension one interior wall of the momentum image, and every $\xi \in \t^*$ which is not parallel to the interior wall, we have $\partial_{\xi} f(y_{+}) \le \partial_{\xi} f(y_{-})$. Graham's argument in fact proves that when the complexity of the action is one, then $\partial_{\xi} f(y_{+}) < \partial_{\xi} f(y_{-})$.
    
    Furthermore, for complexity one actions, it follows from the Duistermaat-Heckman theorem that the differential of $f$ is constant on each chamber. Even though our action is not Hamiltonian, the above facts are still true around the preimage of a small neighbourhood of any point in $\t^*/P$.
    
    Let $\xi \ne 0$ be an element of $P \subset \t^*$. We claim that $\xi$ is parallel to any codimension one interior wall. Otherwise, let $y$ be a point in $\t^*/P$ on a codimension one interior wall, such that $\xi$ is not parallel to the interior wall. By Graham's argument, we have a strict inequality
    \begin{equation}\label{strict_inequality_eq}
        \partial_{\xi} f(y_{+}) < \partial_{\xi} f(y_{-}).
    \end{equation}
    However, moving along the ray $y + t\xi$, since $\xi$ is in $P$, we reach $y$ again when $t = 1$, and the derivative either stayed the same (if we didn't cross any other wall), or decreased even more (if we did), so we have $\partial_{\xi} f(y_{-}) \le \partial_{\xi} f(y_{+})$, which contradicts Equation~\eqref{strict_inequality_eq}.

    Because $(\mathbb{R}\cdot P) \setminus \{0\}$ is dense in the dual $\t_c^*$ of the Lie algebra of $T_c$, this dual is contained in every codimension one interior wall.
    
    Finally, we claim that the $T_c$-action is locally-free. Otherwise, let $x$ be a point in $M$ such that the $T_c$-stabilizer has dimension at least one. Then by the local normal form theorem (\cref{local_normal_form_theorem}), there is an interior wall at $x$ which does not contain the subspace $\t_c^*$. This contradicts the previous paragraph. It follows that the $T_c$-action is locally-free, hence so is the $C$-action, and this concludes the proof that the $K$-action is locally-free.

    For the first part of the lemma, let $x$ be a point in $M$ with stabilizer $H$ with Lie subalgebra $\h$.
    Assume by contradiction that $\h \not\subset \t_{\ham}$. Since $\h$ is closed, it generates a subtorus $H_0$ in $T$.
    Since $\h \not\subset \t_{\ham}$, there exists a subcircle $C$ of $H_0$ whose Lie subalgebra $\mathfrak{c}$ is not contained in $\t_{\ham}$, and since $\mathfrak{c}$ is one-dimensional, we have $\mathfrak{c} \cap \t_{\ham} = \{0\}$. Then, applying the second part of the lemma, the action of $C$ is locally-free, which is a contradiction since $C$ fixes $x$.
\end{proof}
From \cref{infinitesimal_splitting_lemma}, together with Benoist's \cref{maximal_hamiltonian_lemma}, we deduce the Splitting Lemma which we now recall.

\newtheorem*{splitting_lemma}{\cref{splitting_lemma}}
\begin{splitting_lemma}[Splitting lemma]
Let a torus $T$ act symplectically on a closed connected symplectic manifold
$(M,\omega)$. Suppose that $\dim M = 2n$ and $\dim T \geq n-1$,
and that the orbits are isotropic.
Then $T$ decomposes into subtori, $T = T_{\ham} \times T_c$,
such that the action of $T_{\ham}$ is Hamiltonian and the action of $T_c$
is locally-free.

Moreover, every one-parameter subgroup of $T$ whose action is Hamiltonian
is contained in $T_{\ham}$, and the action of any closed subgroup of $T$
whose intersection with $T_{\ham}$ is finite is locally-free. 
\end{splitting_lemma}

\begin{proof}
By \cref{maximal_hamiltonian_lemma}, the maximal Hamiltonian Lie subalgebra $\t_{\ham}$ generates a closed subgroup of $T$, hence a subtorus, which we denote by $T_{\ham}$. 
Let $T_c$ be a closed subgroup of $T$ whose intersection with $T_{\ham}$ 
is finite. Then the Lie algebras of $T_{\ham}$ and $T_c$ intersect trivially, 
so applying \cref{infinitesimal_splitting_lemma} we deduce that the action of $T_c$ is locally-free. In particular, if we choose $T_c$ to be a complementary subtorus to $T_{\ham}$, 
then we get the aforementioned splitting $T \cong T_{\ham} \times T_c$.
\end{proof}

\section{Vanishing Chern class}
\label{vanishing_chern_class_section}

In this section, we prove \cref{chern_class_vanishes_proposition}, 
which is needed for the proof of \cref{complexity_one_torsion_theorem} 
in \cref{proof_of_main_thm_section}. The proof uses the Splitting Lemma 
(\cref{infinitesimal_splitting_lemma}), and a version of Moser's trick for complexity 
one spaces established by Karshon and Tolman in~\cite{centered_hamiltonians} 
and~\cite{tall_existence}.

For the proof of \cref{chern_class_vanishes_proposition}, we need several preliminary results. If a torus $T$ acts on a manifold $M$, then $T$ naturally acts on the tangent space $TM$ by $g \cdot (x, v) = (F_g(x), dF_g(v))$, where $F_g \colon M \rightarrow M$ is the diffeomorphism corresponding to $g \in T$. A vector field $X \in \mathfrak{X}(M)$ is equivariant if its corresponding section $M \rightarrow TM$ is a $T$-equivariant map. The flow of an equivariant vector field is equivariant. For more information see~\cite{equivariant_vector_fields}. We now establish the following lemma on equivariant vector fields in local models of Hamiltonian actions.
\begin{lemma}\label{local_eq_flow_lemma}
    Let $Y := T \times_H V \times \h^0$ be a Hamiltonian $T$-model, with a momentum map $\mu \colon Y \rightarrow \t^*$, as in \cref{def:ham_model}. Let $\eta$ be an element of the annihilator $\h^0 \subset \t^*$. Then there exists an equivariant vector field $X \in \mathfrak{X}(Y)$ that satisfies $d\mu(X) = \eta$ at every point in $Y$.
\end{lemma}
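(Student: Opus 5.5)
The plan is to write down an explicit vector field on $T \times V \times (\t/\h)^*$, check that it is equivariant for both the $H$-action and the $T$-action, and then push it down to $Y$. In the model of \cref{def:ham_model} we have $\mu([t,z,\nu]) = \nu + \mu_V(z)$, and here $(\t/\h)^*$ is identified with $\h^0 \subset \t^*$. Since $\eta \in \h^0$, the natural candidate is the constant vector field in the $\h^0$-direction.

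First, on $\widetilde Y := T \times V \times \h^0$ we define
\[
\widetilde X(t,z,\nu) := (0,0,\eta) \in T_t T \times T_z V \times T_\nu \h^0 .
\]
Its flow is $(t,z,\nu) \mapsto (t,z,\nu + s\eta)$.

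Next, we check equivariance. Left multiplication by $T$ on the first factor commutes with this flow. The $H$-action $h\cdot(t,z,\nu) = (th^{-1}, h\cdot z, \nu)$ also commutes with it, since $H$ acts trivially on the $\h^0$ factor. So $\widetilde X$ is invariant under $T \times H$. Because $\widetilde X$ is $H$-invariant, it descends to a well-defined vector field $X$ on the quotient $Y = \widetilde Y / H$. It is $\pi$-related to $\widetilde X$, where $\pi \colon \widetilde Y \to Y$ is the quotient map, which is a submersion since $H$ acts freely on the $T$ factor. The $T$-invariance of $\widetilde X$ then gives $T$-equivariance of $X$: the flow of $X$ is $[t,z,\nu] \mapsto [t,z,\nu+s\eta]$, and this commutes with the $T$-action.

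Finally, we compute $d\mu(X)$. We have $\mu \circ \pi(t,z,\nu) = \nu + \mu_V(z)$, so
\[
d\mu(X) = d(\mu\circ\pi)(\widetilde X) = \eta + d\mu_V(0) = \eta
\]
at every point.

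The only delicate point is making sure $X$ is well defined. This is where the hypothesis $\eta \in \h^0$ is used: it guarantees that $\eta$ is tangent to the $(\t/\h)^*$ factor, on which $H$ acts trivially. Beyond that, the argument is routine.
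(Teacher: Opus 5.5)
Your proposal is correct and is essentially the paper's proof. The paper writes $Y = (T \times_H V) \times \h^0$, which is a product because $H$ acts trivially on the last factor, and takes the constant vector field $(0,\eta)$ in the $\h^0$ direction; that vector field is exactly the descent of your $\widetilde X$, and your lift-and-descend argument simply makes explicit why it is well defined and $T$-invariant.
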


\begin{proof}
    The tangent space $TY$ can be split to $T(T \times_H V) \times T(\h^0)$, and we have $T(\h^0) \cong \h^0$. Thus, the vector field given by
    \begin{equation*}
        X([t, z, \nu]) := (0, \eta) \in T_{[t, z]}(T \times_H V) \times \h^0 \cong T_{[t, z, \nu]}Y
    \end{equation*}
    is equivariant and satisfies $d\mu(X) = \eta$ at every point.
\end{proof}

Gluing together the local vector fields of \cref{local_eq_flow_lemma}, 
we deduce the following global lemma in our setting.

\begin{lemma}\label{eq_flow_lemma}
    Let a torus $T$ act on a connected symplectic manifold $(M,\omega)$,
    with isotropic orbits. Assume that $\dim M = 2n$, $\dim T = n-1$, and that $[\omega]$ is integral.
    Let $P$ be the group of periods, let $\mu:M \rightarrow \t^*/P$ be a cylinder-valued momentum map, and let $\t_{\ham} \subset \t$ be the maximal Hamiltonian Lie subalgebra, given by \cref{maximal_hamiltonian_subalgebra}.
    
    Then for every element $\eta$ in the annihilator $\t_{\ham}^0 \subset \t^*$, there exists an equivariant vector field $X \in \mathfrak{X}(M)$ that satisfies $d\mu(X) = \eta$ at every point in $M$.
\end{lemma}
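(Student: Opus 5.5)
The plan is to build $X$ locally near each orbit using \cref{local_eq_flow_lemma}, and then glue the local fields with a $T$-invariant partition of unity. The key observation is that the condition $d\mu(X) = \eta$ is affine in $X$, so it survives convex combinations. Equivariance is preserved when we multiply by $T$-invariant functions and add. I will assume, as in the setting where this lemma is used, that $M$ is closed, so that the Infinitesimal Splitting Lemma applies and finitely many local charts suffice.

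\emph{Step 1: local Hamiltonian structure.} Fix $x \in M$. Since the orbits are isotropic, the action is Hamiltonian on a small invariant neighbourhood $U_x$ of $T\cdot x$, as in the proof of \cref{lemma:isotropic_invariant}. Concretely, on a $T$-invariant tubular neighbourhood, which retracts onto the orbit, the closed forms $\iota_{\xi_M}\omega$ restrict to zero on the orbit and are therefore exact. The cylinder-valued map $\mu$ then lifts on $U_x$ to a genuine momentum map $\tilde\mu_x \colon U_x \to \t^*$, with $d\tilde\mu_x = d\mu$. By \cref{local_normal_form_theorem}, after shrinking $U_x$, there is an equivariant symplectomorphism from $U_x$ onto a neighbourhood of the zero section in a model $Y = T\times_H V\times \h^0$. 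Under it, $\tilde\mu_x$ corresponds to the model momentum map up to an additive constant, which does not affect differentials. Here $H$ is the stabilizer of $x$, with Lie algebra $\h$.

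\emph{Step 2: $\eta$ is admissible in every model.} This is the main point. By \cref{infinitesimal_splitting_lemma}, $\h \subset \t_{\ham}$, hence $\t_{\ham}^0 \subset \h^0$, so $\eta \in \h^0$. Therefore \cref{local_eq_flow_lemma} produces an equivariant vector field $X_x$ on $Y$ with $d\mu_Y(X_x)=\eta$. Restricting and transporting it to $U_x$ gives an equivariant field $X_x$ on $U_x$ with $d\mu(X_x) = \eta$.

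\emph{Step 3: gluing.} Cover $M$ by finitely many such invariant sets $U_{x_1},\dots,U_{x_N}$, using compactness. Choose a partition of unity $\{\phi_i\}$ subordinate to this cover and average it over $T$ with Haar measure. Each $U_{x_i}$ is invariant and averaging preserves nonnegativity and the sum $1$, so we obtain a $T$-invariant partition of unity subordinate to the same cover. Set $X := \sum_i \phi_i X_{x_i}$. Then $X$ is equivariant because the $\phi_i$ are invariant and the $X_{x_i}$ are equivariant. It also satisfies $d\mu(X) = \sum_i \phi_i\, d\mu(X_{x_i}) = \sum_i\phi_i\,\eta = \eta$.

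The only nontrivial input is Step 2. Without the Splitting Lemma, $\eta$ would need to lie in $\h^0$ for every stabilizer, which fails in general. The remaining steps are routine bookkeeping: lifting $\mu$ locally and checking that the normal-form identification matches $d\mu$.
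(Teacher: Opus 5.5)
Your proposal is correct and follows the paper's proof essentially step for step. The Infinitesimal Splitting Lemma gives $\t_{\ham}^0 \subset \h^0$ for every stabilizer, \cref{local_eq_flow_lemma} then supplies local equivariant fields on a finite invariant cover, and an invariant partition of unity glues them. Your extra details only fill in points the paper leaves implicit: the local lift of $\mu$ to an honest momentum map, the averaging of the partition of unity, and the explicit closedness assumption (which the paper also uses tacitly when it says ``since $M$ is compact'').
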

\begin{proof}
    By \cref{infinitesimal_splitting_lemma}, for every Hamiltonian $T$-model $Y := T \times_H V \times \h^0$ that is associated with an orbit in $M$, we have $\t_{\ham}^0 \subset \h^0$, so $\eta$ is in $\h^0$. Hence, since $M$ is compact, it follows by \cref{local_eq_flow_lemma} that we can choose a finite cover of $M$ by invariant open subsets $U_i \subset M$, and equivariant vector fields $X_i \in \mathfrak{X}(U_i)$ satisfying $d\mu(X_i) = \eta$. 
Let $\rho_i \colon M \rightarrow \mathbb{R}$ be an invariant partition of unity for the invariant cover, and define a vector field by $X := \sum_i \rho_i X_i$. Then $X$ is equivariant and satisfies $d\mu(X) = \eta$.
\end{proof}
Next, we need a version of Moser's trick for complexity one spaces established by Karshon and Tolman in~\cite{centered_hamiltonians, tall_existence}. First, we need the following definition from~\cite{centered_hamiltonians}.
\begin{definition}[Definition 3.3 in~\cite{centered_hamiltonians}]
    Let a torus act smoothly on oriented smooth manifolds $M$ and $M'$, and let $\mu \colon M \rightarrow \t^*$ and $\mu' \colon M' \rightarrow \t^*$ be $T$-invariant maps. A \textbf{$\boldsymbol{\mu}$-$\boldsymbol{T}$-diffeomorphism} from $M$ to $M'$ is an orientation preserving $T$-equivariant diffeomorphism $F \colon M \rightarrow M'$ which satisfies $\mu' \circ F = \mu$.
\end{definition}
Most often, $M$ and $M'$ will be complexity one spaces, and $\mu$ and $\mu'$ will be their momentum maps.
\begin{theorem}[Proposition 3.3 in~\cite{centered_hamiltonians}, Proposition 3.3 in~\cite{tall_existence}]
\label{complexity_one_isotopy}
    Let $(M, \omega, \mu, U)$ and $(M', \omega', \mu', U)$ be complexity one spaces that have the same Duistermaat-Heckman measure. Then every $\mu$-$T$-diffeomorphism from $M$ to $M'$ can be smoothly isotoped to an isomorphism of $(M, \omega, \mu, U)$ and $(M', \omega', \mu', U)$, through $\mu$-$T$-diffeomorphisms.
\end{theorem}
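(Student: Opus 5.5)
We will follow the Moser path method, adapted so that the momentum map is preserved at every stage. Let $F \colon M \to M'$ be a $\mu$-$T$-diffeomorphism and set $\omega_1 := F^*\omega'$. Then $(M,\omega_1,\mu,U)$ is a complexity one space with the same momentum map $\mu$ as $(M,\omega,\mu,U)$, and with the same Duistermaat-Heckman measure. So it suffices to find an isotopy $\psi_t$ of $M$ through $\mu$-$T$-diffeomorphisms with $\psi_1^*\omega_1 = \omega$; then $F\circ\psi_t$ is the required isotopy. Put $\sigma := \omega_1 - \omega$ and $\omega_t := \omega + t\sigma$. Since both forms are invariant and have the same momentum map, $\iota_{\xi_M}\sigma = 0$ for all $\xi \in \t$. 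Thus $\sigma$ is a closed basic form, in the sense that it is invariant and horizontal.

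\textbf{Step 1 (nondegeneracy).} We show that every $\omega_t$ is symplectic. At a point of a regular level of $\mu$, the forms $\omega$ and $\omega_1$ agree on pairs involving orbit directions, and they differ only on the two-dimensional horizontal part, which descends to the reduced surface. On that surface, both forms are area forms. They induce the same orientation, because $F$ is orientation preserving and $\mu$ is fixed. Hence their convex combination is again nondegenerate there. Near singular orbits we will use the local normal form (\cref{local_normal_form_theorem}), where the same positivity argument applies on the slice $V$. Alternatively, we can use density of regular points together with a direct check that $\omega_t^n$ is a positive multiple of $\omega^n$, since $\omega_t^n$ is affine-in-$t$ of a fixed sign in the reduced direction.

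\textbf{Step 2 (exactness with a basic primitive).} This is the main step: we want a $T$-invariant horizontal one-form $\beta$ with $d\beta = \sigma$. First we work locally. Over a small invariant neighbourhood $\mu^{-1}(W)$ of each orbit, the local model together with the contractibility of $W$ reduces the question to the class of $\sigma$ on a single reduced surface. That class is the difference of the reduced symplectic areas, and it vanishes because equal Duistermaat-Heckman measures mean equal areas of the reduced spaces. Next we globalize, using a Čech/partition-of-unity argument over the convex base $U$ with invariant partitions of unity pulled back from $U$. The obstruction to gluing lies in the first cohomology of the fibres $\mu^{-1}(y)/T$ twisted over $U$. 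We will try to kill it by adjusting the local primitives by closed basic forms pulled back along $\mu$, using that $U$ is convex and the level sets are connected. I expect this gluing to be the main obstacle, especially across non-generic fibres, where the reduced spaces may be singular.

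\textbf{Step 3 (the flow).} Define $X_t$ by $\iota_{X_t}\omega_t = -\beta$. Since $\beta$ and $\omega_t$ are invariant, $X_t$ is equivariant. For $\xi \in \t$ we have
\[
d\langle\mu,\xi\rangle(X_t) = -\omega_t(\xi_M,X_t) = -\beta(\xi_M) = 0,
\]
so $X_t$ is tangent to the level sets of $\mu$. Because $\mu$ is proper, the flow $\psi_t$ exists for $t\in[0,1]$, and each $\psi_t$ is a $\mu$-$T$-diffeomorphism. The standard computation $\frac{d}{dt}\psi_t^*\omega_t = \psi_t^*(\sigma + d\iota_{X_t}\omega_t) = 0$ gives $\psi_1^*\omega_1 = \omega$, which completes the argument.
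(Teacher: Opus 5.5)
\textbf{There is a genuine gap in Step~2.} Your architecture matches the equivariant Moser argument that the paper attributes to Karshon--Tolman: pull back by $F$, interpolate linearly, and run an equivariant Moser flow generated by a basic primitive so that $\mu$ is preserved. Step~3 is correct as written. But Step~2 carries essentially the whole content of the statement, and it is not proved; you say yourself that you ``will try'' to kill the gluing obstruction.

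What you need is $[\sigma]=0$ in $H^2_{\basic}(M)\cong H^2(M/T;\R)$. Equality of Duistermaat--Heckman measures only gives fibrewise information, namely $\int_{M_y}\sigma_y=0$ on each regular reduced space $M_y$.

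The mechanism you propose for passing from fibrewise to global vanishing cannot work. A closed basic one-form pulled back along $\mu$ from a convex set is $d(f\circ\mu)$ for an invariant function. So adding such forms to the local primitives $\beta_i$ does not change the classes of $\beta_i-\beta_j$ in $H^1_{\basic}(\mu^{-1}(W_i\cap W_j))$. It therefore does not touch the \v{C}ech class with values in the first cohomology of the reduced surfaces, which you correctly identified as the obstruction.

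Killing that class, and controlling the singular fibres, requires global topological input about $M/T\to\mu(M)$ that is special to complexity one. For instance, an equivariant retraction of $M$ onto a single level set would suffice. So would the fact that restriction $H^2(M/T;\R)\to H^2(M_y;\R)$ to one regular reduced space is injective (e.g.\ because $M/T$ is homotopy equivalent to a reduced surface). This is the genuinely nontrivial ingredient in Karshon--Tolman's proof, and your proposal does not supply it.

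\textbf{Two smaller points.}

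First, your ``local'' step is not what the Guillemin--Sternberg--Marle model gives. That model describes a neighbourhood of an orbit, where positive-degree basic cohomology vanishes anyway, not a neighbourhood of a level set. Identifying $H^2_{\basic}(\mu^{-1}(W))$ with the cohomology of one reduced space needs an equivariant retraction of $\mu^{-1}(W)$ onto $\mu^{-1}(y)$. Moreover, at a singular value $y$ the reduced space is singular or a point. So the ``difference of reduced areas'' must be read off at nearby regular values, via an injectivity statement you have not proved.

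Second, in Step~1 the density alternative is invalid: positivity of $\omega_t^n/\omega^n$ on a dense set gives only non-negativity in the limit. The slice argument does work, but the reason is not ``the same positivity argument.'' Because $\mu$ is common, $\omega$ and $\omega_1$ agree on $\t\cdot x\times T_xM$. Comparing Hessians of $\mu^\xi$ for $\xi\in\h$, they also agree on the nontrivial $\h$-weight part of $T_xM$ paired with $T_xM$. This leaves an at most two-dimensional $\h$-fixed part of the slice, and on that part your orientation argument applies.
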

Note that our formulation is slightly stronger than theirs. In their paper, they claim there exists a $\mu$-$T$-diffeomorphism if and only if there exists an isomorphism. However, they also construct a smooth isotopy of $\mu$-$T$-diffeomorphisms as part of their proof, which is based on equivariant Moser's trick. See also~\cite[Lemma 9.6]{classification_non_ham_s1}.

Finally, we are ready to prove \cref{chern_class_vanishes_proposition}.
\begin{proposition}\label{chern_class_vanishes_proposition}
    Let a torus $T$ act on a connected symplectic manifold $(M,\omega)$,
    with isotropic orbits. Assume that $\dim M = 2n$, $\dim T = n-1$, and that $[\omega]$ is integral.
    Let $P$ be the group of periods, let $\t_{\ham} \subset \t$ be the maximal Hamiltonian Lie subalgebra, given by \cref{maximal_hamiltonian_subalgebra}.
    Moreover, let $T_c \subset T$ be a subtorus whose Lie algebra $\t_c$ satisfies $\t_{\ham} \oplus \t_c \cong \t$, and let $\mu_c \colon M \rightarrow \t_c^*/P$ be a cylinder-valued momentum map.
    
    Then for every $y \in \t_c^*/P$, the real Chern class $c_1$ of the $T_c$-action on $\mu_c^{-1}(y)$, given by Definition~\ref{real_chern_class_definition}, vanishes in $H_{\basic}^2(\mu_c^{-1}(y), \t_c)$.
\end{proposition}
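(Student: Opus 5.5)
We want to show that the real Chern class of $T_c$ acting on $N:=\mu_c^{-1}(y)$ vanishes. The idea is to use the Duistermaat--Heckman variation formula \eqref{dh_variation_equation}, applied to the reduced spaces of the $T_c$-action, together with periodicity. Moving the level $y$ by an element of $P$ returns us to the same level set. On the other hand, the Karshon--Tolman isotopy (\cref{complexity_one_isotopy}) lets us identify the reduced data along this loop in a way that forces the linear DH term $\langle z-y,c_1\rangle$ to be zero.

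\textbf{Setup.} By the Splitting Lemma (\cref{infinitesimal_splitting_lemma}), $T_c$ acts locally freely, so every $y$ is a regular value of $\mu_c$. Each level set $N_z:=\mu_c^{-1}(z)$ is therefore a closed manifold with a locally free $T_c$-action. The quotient $N_z/T_c$ is a symplectic orbifold of dimension $2n-2\dim T_c$. It carries the residual Hamiltonian $T_{\ham}$-action, with momentum map induced by the $\t_{\ham}^*$-component of $\mu$, and this action has complexity one.

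By \cref{eq_flow_lemma}, for $\eta\in\t_{\ham}^0\cong\t_c^*$ there is an equivariant vector field $X$ with $d\mu(X)=\eta$. Its flow $\phi_t$ is $T$-equivariant and carries $N_z$ to $N_{z+t\eta}$. It also preserves the $\t_{\ham}^*$-component of $\mu$, because $\eta$ annihilates $\t_{\ham}$. Take $\eta\in P$ and $t=1$. Then $\phi_1$ maps $N_y$ to itself $T$-equivariantly, preserving the $\t_{\ham}$-momentum map, and it is isotopic to the identity.

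\textbf{DH variation.} Identify the nearby levels $N_{y+t\eta}$ with $N_y$ via $\phi_t$. Since $\phi_t^*$ commutes with the identification used in \eqref{dh_variation_equation} up to homotopy, the variation formula gives
\[
[\phi_t^*i_{y+t\eta}^*\omega]_{\basic}=[i_y^*\omega]_{\basic}+t\langle\eta,c_1\rangle
\]
in $H^2_{\basic}(N_y)$, and this holds for all $t$ by continuity and the fact that there are no walls. At $t=1$, the map $\phi_1$ is isotopic to the identity through $T$-maps, so it acts trivially on basic cohomology. Hence the left side equals $[i_y^*\omega]_{\basic}$, which gives $\langle\eta,c_1\rangle=0$ for every $\eta\in P$.

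Because $P$ spans $\t_c^*$ (\cref{lem:spanp}), we conclude $c_1=0$.

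\textbf{Main obstacle.} The step I expect to be hardest is justifying that $\phi_1^*$ acts as the identity on $H^2_{\basic}(N_y)$, and that $\phi_1^*[i_{y+\eta}^*\omega]$ really equals $[i_y^*\omega]$. Here $y+\eta=y$ in $\t_c^*/P$, but the family is being followed in the universal cover. The isotopy $\phi_s$ for $s\in[0,1]$ moves between different levels, so it is not by itself an isotopy of $N_y$.

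To handle this, I would lift to the covering $\tilde M\to M$ associated to $P$, on which $\mu_c$ lifts to $\t_c^*$. On $\tilde M$, the deck transformation $\tau$ satisfies $\tilde\mu_c\circ\tau=\tilde\mu_c+\eta$. The composition $\tau^{-1}\circ\tilde\phi_1$ is then a $\mu$-$T$-diffeomorphism of a neighborhood of the level, and it is an isotopy through equivariant maps homotopic to the identity. So the cohomology comparison reduces to a homotopy argument in basic cohomology, which holds because equivariant homotopies induce equal maps on $H_{\basic}$.

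If this fails, I would fall back on \cref{complexity_one_isotopy}. Applied to the complexity one reduced spaces at $y$ and at $y+\eta$, which have equal DH measures by the DH formula and periodicity, it gives a symplectic identification isotopic to $\phi_1$. This forces the reduced classes to agree, and the DH formula then yields $\langle\eta,c_1\rangle=0$ as before.
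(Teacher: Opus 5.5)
There is a genuine gap, at exactly the step you flagged. You claim that $\phi_1|_{N_y}$ acts trivially on $H^2_{\basic}(N_y)$ because $\phi_1$ is isotopic to the identity, and this is false in general. The isotopy $\phi_s$ is an isotopy of $M$ that leaves $N_y$. The map $\phi_1|_{N_y}$ is the monodromy of the fibration $\mu_c\colon M\to\t_c^*/P$ around the loop $s\mapsto y+s\eta$, and this monodromy can act nontrivially on basic cohomology.

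Knowing that $\phi_1\circ i_y\simeq i_y$ as maps $N_y\to M$ only controls classes after passing to ordinary cohomology. There the components $\langle c_1,\eta\rangle=[d\langle\alpha,\eta\rangle]$ are already zero, so nothing is learned.

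A quick test shows the argument proves too much. Apart from producing local freeness and the vector field $X$, your main argument never uses complexity one. It therefore applies verbatim to Kotschick's free circle action with contractible orbits on a $6$-manifold: there $\mu$ is a submersion, so an equivariant $X$ with $d\mu(X)=1$ exists. Your argument would then give vanishing Chern class on the level sets. By \cref{non_free_torus_bundle_lemma} and $\pi_1$-injectivity of the level set, the orbits would be non-contractible, a contradiction. The covering-space repair does not help: restricted to the level, $\tau^{-1}\circ\tilde\phi_1$ is again this monodromy, and nothing makes it isotopic to the identity through self-maps of the level.

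Your fallback has the right idea, since a Karshon--Tolman isotopy is exactly where complexity one enters, but you apply it to the wrong object. The reduced spaces $N_y/T_c$ are in general orbifolds, because $T_c$ acts only locally freely. \cref{complexity_one_isotopy} is a statement about complexity one \emph{manifolds}, and no orbifold version is available.

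The fix is to apply it upstairs:
\begin{enumerate}
\item Take a small convex neighbourhood $V$ of $y$ in $\t_c^*/P$, and let $W\subset\t^*/P\cong\t_{\ham}^*\times(\t_c^*/P)$ be its preimage.
\item Lift $W$ to a convex subset of $\t^*$. Then $(\mu^{-1}(W),\omega,\mu,W)$ is a $2n$-dimensional complexity one space for all of $T$.
\item Since $\mu\circ\phi_1=\mu$ and $\phi_1$ is orientation preserving, $\phi_1$ is a $\mu$-$T$-diffeomorphism of this space to itself. Hence it is isotopic, through $\mu$-$T$-diffeomorphisms, to a symplectomorphism $F$.
\item Maps $G$ with $\mu\circ G=\mu$ preserve $N_y$. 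So this restricts to an equivariant isotopy \emph{within} $N_y$ from $\phi_1|_{N_y}$ to $F|_{N_y}$, and $F|_{N_y}$ preserves $i_y^*\omega$.
\end{enumerate}
Therefore $(\phi_1|_{N_y})^*[i_y^*\omega]_{\basic}=[i_y^*\omega]_{\basic}$. Your DH identity at $t=1$ then gives $\langle\eta,c_1\rangle=0$. This is the paper's argument.
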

\begin{remark}
    The real Chern class $c_1$ in the statement of \cref{chern_class_vanishes_proposition} is well-defined since the $T_c$-action is locally-free by \cref{infinitesimal_splitting_lemma}.
\end{remark}

\begin{proof}[Proof of \cref{chern_class_vanishes_proposition}]
    Choose a cylinder-valued momentum map $\mu:M \rightarrow \t^*/P$ and apply \cref{lem:split_basis} to obtain a basis $\xi_1,\dots,\xi_{n-1}$ for $\t$, with dual basis $\xi_1^*,\dots,\xi_{n-1}^*$, such that $\xi_1^*,\dots,\xi_k^*$ is a basis for $\t_{\ham}^*$, and $\xi_{k+1}^*,\dots,\xi_{n-1}^*$ is a basis for the lattice $P$ (and a basis of $\t_c^*$). We have the identification
    \begin{equation}\label{identification_equation}
        \t^*/P \cong \t_{\ham}^* \times (\t_c^*/P) \cong \mathbb{R}^k \times\mathbb{T}^{n-k-1},
    \end{equation}
    and $\mu_c$ is equal (up to a constant) to the composition of the momentum map $\mu$ with the projection $\t^*/P \rightarrow \t_c^*/P$.
    
    By \cref{eq_flow_lemma}, for every $k+1 \le j \le n -1$, we can choose an equivariant vector field $X_j$ on $M$ whose equivariant flow $\Psi^j_t \colon M \rightarrow M$ satisfies
    \begin{equation*}
        \mu \circ \Psi_t^j = \mu + t\xi_j^*.
    \end{equation*}
    By the choice of basis, $\xi_j^*$ is in $P$, hence for $t=1$ we have
    \begin{equation}\label{eq_flow_gradient}
        \mu \circ \Psi_1^j = \mu.
    \end{equation}
    
    Let $y$ be a point in $\t_c^*/P$. By \cref{infinitesimal_splitting_lemma}, the action of $T_c$ is locally-free, so we can define the real Chern class $c_1 \in H_{\basic}^2(\mu_c^{-1}(y), \t_c)$ of the $T_c$-action on $\mu_c^{-1}(y)$, see \cref{real_chern_class_definition}. Furthermore, let
    $$\Psi^j_t|_y \colon \mu_c^{-1}(y) \rightarrow \mu_c^{-1}(y + t\xi_j^*)$$
    be the restriction of $\Psi^j_t$ to $\mu_c^{-1}(y)$. For every $z \in \t_c^*/P$, let $i_z:\mu_c^{-1}(z) \rightarrow M$ be the inclusion map of the level set at $z$, and let ${(i_z)}^*\omega \in \Omega^2(\mu_c^{-1}(z), \mathbb{R})$ be the pullback of $\omega$ by $i_z$. Then by the Duistermaat-Heckman theorem (see Equation~\eqref{dh_variation_equation}), we have
    \begin{equation}\label{general_duistermaat_heckman_flow_eq}
        {(\Psi^j_t|_y)}^*[{(i_{y + t\xi_j^*})}^*\omega]_{\basic} = [{(i_y)}^*\omega]_{\basic} + t\langle c_1, \xi_j^* \rangle
    \end{equation}
    for every $t \ge 0$ as long as the ray $[y, y + t\xi_h^*]$ does not cross a codimension one interior wall. Since the $T_c$-action is locally-free, there are no codimension one interior walls in the image of $\mu_c$, hence Equation~\eqref{general_duistermaat_heckman_flow_eq} is true for every $t$.
        
    By Equation~\eqref{general_duistermaat_heckman_flow_eq}, for $t = 1$, we have that ${(\Psi^j_1|_y)}^*[{(i_{y + \xi_j^*})}^*\omega]_{\basic} = [{(i_y)}^*\omega]_{\basic} + \langle c_1, \xi_j^* \rangle$. Since $\xi_j^*$ is in $P$, we have $y = y + \xi_j^*$ in $\t^*_c/P$, and so
    \begin{equation}\label{duistermaat_heckman_flow_eq}
    {(\Psi^j_1|_y)}^*[{(i_y)}^*\omega]_{\basic} = [{(i_y)}^*\omega]_{\basic} + \langle c_1, \xi_j^* \rangle.
    \end{equation}

    Let $V \subset \t_c^*/P$ be a small convex neighbourhood of $y$, and let $\pi \colon \t^*/P \rightarrow \t_c^*/P$ be the projection map, given by the identification in Equation~\eqref{identification_equation}. The preimage $W := \pi^{-1}(V)$ in $\t^*/P$ is a small convex neighbourhood of $\pi^{-1}(y)$. See \cref{fig:cylinder_diagram}. Since $W$ is convex, we can choose a local section $\t^*/P \rightarrow \t$ that lifts $W$ to a convex subset in $\t$, hence the $T$-action on the preimage $\mu^{-1}(W)$ is Hamiltonian and defines a complexity one space $(\mu^{-1}(W), \omega, \mu, W)$.
    
    \begin{figure}
    \begin{tikzpicture}
        
        \def\cylinderRadius{1.5}
        \def\cylinderHeight{5}
        \def\ellipseXScale{1}
        \def\ellipseYScale{0.3}
        \def\ellipsesDistance{7}

        \pgfmathsetmacro{\xRadius}{\cylinderRadius * \ellipseXScale}
        \pgfmathsetmacro{\yRadius}{\cylinderRadius * \ellipseYScale}

        \begin{scope}[local bounding box=cylinder]
            
            \draw[thick] (0,0) ellipse (\xRadius cm and \yRadius cm);
            
            \draw[thick] (0, \cylinderHeight) ellipse (\xRadius cm and \yRadius cm);
            
            \draw[thick] (-\xRadius, 0) -- (-\xRadius, \cylinderHeight);
            \draw[thick] (\xRadius, 0) -- (\xRadius, \cylinderHeight);
            
            \node[above=0.15cm of {0,\cylinderHeight+\yRadius}] {$t^*/P$};
            
            \def\grayLineOffset{0.6}
            \draw[green!40!black, dashed, thick] (-\grayLineOffset, -\yRadius + 0.1) -- (-\grayLineOffset, \cylinderHeight-\yRadius);
            \draw[green!40!black, dashed, thick] (\grayLineOffset, -\yRadius + 0.1) -- (\grayLineOffset, \cylinderHeight-\yRadius);
            
            \foreach \y in {-0.3, 0, 0.3, 0.6, 0.9, 1.2, 1.5, 1.8, 2.1, 2.4, 2.7, 3, 3.3, 3.6, 3.9, 4.2} {
                \draw[green!40!black] (\grayLineOffset-0.1, \y) -- (-\grayLineOffset+0.1, \y+0.2);
            }

            \node[green!40!black, right=0.15cm of {\grayLineOffset-0.2, \cylinderHeight*0.4}] {$W$};
            
            \draw[blue!70!black, thick] (0, -\yRadius) -- (0, \cylinderHeight-\yRadius);
            \node[blue!70!black, right=0.03cm of {0, 0.635*\cylinderHeight}] {$\pi^{-1}(y)$};

        \end{scope}

        \node (ellipse_right) [draw, thick, ellipse, 
                            minimum width=\xRadius*2cm, 
                            minimum height=\yRadius*2cm, 
                            at={(\ellipsesDistance, \cylinderHeight/2)}] {};
        \node[above=0.15cm of ellipse_right.north] {$t_c^*/P$};

        \draw[green!40!black, ultra thick] (\ellipsesDistance, \cylinderHeight/2) + (250:\xRadius cm and \yRadius cm) arc (250:290:\xRadius cm and \yRadius cm);
        \draw[green!40!black, ultra thick] (6.5, 2.00) -- (6.5, 2.15);
        \draw[green!40!black, ultra thick] (7.5, 2.00) -- (7.5, 2.15);
        \node[green!40!black] at (7.3, 2.4) {$V$};
        
        \node[fill=blue!70!black, shape=circle, minimum size=2mm, inner sep=0pt, at={(\ellipsesDistance, \cylinderHeight/2-\yRadius)}, label={[blue, below, yshift=-0.15cm]:$y$}] (y) {};

        \draw[-{Stealth[length=4mm, width=3mm]}, thick] (2, \cylinderHeight/2) -- (\ellipsesDistance - \xRadius - 0.2, \cylinderHeight/2) node[midway, above] {$\pi$};

    \end{tikzpicture}
    \caption{Diagram of $y$, $\pi^{-1}(y)$, and $W$, with respect to the projection~$\pi$ from the cylinder $\t^*/P\cong\mathbb{R}^k \times\mathbb{T}^{n-k-1}$ to the torus $\t_c^*/P\cong\mathbb{T}^{n-k-1}$.}
    \label{fig:cylinder_diagram}
    \end{figure}

    By Equation~\eqref{eq_flow_gradient}, the equivariant diffeomorphism $\Psi_1^j:M \rightarrow M$ restricts to a $\mu$-$T$-diffeomorphism $\Psi_1^j \colon \mu^{-1}(W) \rightarrow \mu^{-1}(W)$. Hence, by \cref{complexity_one_isotopy}, we can construct an isotopy through $\mu$-$T$-diffeomorphisms from the $\mu$-$T$-diffeomorphism $\Psi_1^j$ to an isomorphism
    \begin{equation*}
        F_j \colon (\mu^{-1}(W), \omega, \mu, W) \rightarrow (\mu^{-1}(W), \omega, \mu, W)
    \end{equation*}
    of complexity one spaces. Since $F_j$ is an isomorphism, the restriction $F_j|_y \colon \mu_c^{-1}(y) \rightarrow \mu_c^{-1}(y)$ satisfies
    \begin{equation}\label{fj_symplectomorphism}
        {(F_j|_y)}^*{(i_y)}^*\omega = {(i_y)}^*\omega.
    \end{equation}
    Since $\Psi_1^j$ and $F_j$ are isotopic through $\mu$-$T$-diffeomorphisms, then the restrictions $\Psi_1^j|_y$ and $F_j|_y$ are also equivariantly isotopic. Thus, we deduce from Equation~\eqref{duistermaat_heckman_flow_eq} that
    \begin{equation*}
        {(F_j|_y)}^*[{(i_y)}^*\omega]_{\basic} = {(\Psi^j_1|_y)}^*[{(i_y)}^*\omega]_{\basic} = [{(i_y)}^*\omega]_{\basic} + \langle c_1, \xi_j^* \rangle.
    \end{equation*}
    Hence, together with Equation~\eqref{fj_symplectomorphism}, we deduce that $\langle c_1, \xi_j^* \rangle = 0$. Because this argument applies for every $\xi_j^*$ with $k + 1 \le j \le n-1$, and since $\xi_{k+1}^*,\dots,\xi_{n-1}^*$ are a basis for $\t_c^*$, we deduce that the real Chern class $c_1$ vanishes in $H_{\basic}^2(\mu_c^{-1}(y), \t_c)$, as needed.
\end{proof}
\begin{remark}
    The proof of \cref{chern_class_vanishes_proposition} fails when $\dim T < n-1$. In particular, it fails for non-Hamiltonian circle actions with fixed points in dimension 6 (as in McDuff~\cite{mcduff_circle_actions}, Tolman~\cite{tolman_isolated_points}, Jang--Tolman~\cite{jang_tolman}); these actions satisfy $\dim T = n - 2$, and their level sets have non-vanishing Chern classes. Since they have fixed points, the Splitting Lemma (\cref{infinitesimal_splitting_lemma}) is false for these examples, and so is \cref{eq_flow_lemma}. The proof also fails for Kotschick's~\cite{kotschick_contractible_orbits} free symplectic circle action with contractible orbits in dimension 6, which also satisfy $\dim T = n - 2$ and has non-vanishing Chern classes. The proof fails for Kotschick's example since we cannot apply Karshon-Tolman's version of Moser's method (\cref{complexity_one_isotopy}); their result only applies for complexity one spaces, and is false in general.
\end{remark}

\section{Proof of the main theorems}
\label{proof_of_main_thm_section}

In this section, we prove \cref{thm:main},
\cref{complexity_one_torsion_theorem}, and \cref{extension_corollary}.
We start with the proof of \cref{complexity_one_torsion_theorem},
whose statement we now recall:

\begin{quotation}
Let a torus $T$ act symplectically on a closed connected symplectic manifold
$(M,\omega)$. Suppose that $\dim M = 2n$ and $\dim T \geq n-1$,
and that the orbits are isotropic.
Then every non-Hamiltonian subcircle action of $T$ 
represents a non-torsion element of $\pi_1(M)$.
\end{quotation}

\begin{proof}[Proof of \cref{complexity_one_torsion_theorem}]
    Let $T^{n-1}$ act on $(M^{2n}, \omega)$ with isotropic orbits. Let $C \subset T$ be 
    a subcircle that acts in a non-Hamiltonian fashion. We wish to show 
    that the orbits of the $C$-action, and their iterates, are non-contractible in $M$.

    By \cref{perturbation_lemma}, there exists an integral $T$-invariant 
    symplectic form on $M$, for which the orbits of $T$ are still isotropic, 
    and such that $C$ still acts in a non-Hamiltonian fashion. 
    By replacing $\omega$ with this new symplectic form, we assume without
    loss of generality that $[\omega]$ is integral. It follows that the
    group of periods $P \subset \t^*$ of the action is a discrete
    subgroup of $\t^*$, and we can define a cylinder-valued
    momentum map
    \begin{equation*}
    \mu \colon M \rightarrow \t^*/P
    \end{equation*}
    that generates the torus action (see \S\ref{subsec:nonHam}). 
    Note that $\mu$ is $T$-invariant since the orbits are isotropic; see \cref{lemma:isotropic_invariant}.

    Let $T_c \subset T$ be a subtorus containing $C$, whose Lie
    subalgebra $\t_c$ is complementary to the maximal
    Hamiltonian Lie subalgebra $\t_{\ham}$ (given by \cref{maximal_hamiltonian_subalgebra}).
    Then by \cref{lem:split_basis}, we have the identification
    \begin{equation*}
        \t^*/P \cong \t_{\ham}^* \times (\t_c^*/P) \cong \mathbb{R}^k \times \mathbb{T}^{n-k-1},
    \end{equation*}
    and the composition of $\mu:M \rightarrow \t^*/P$ with
    the projection $\t^*/P \rightarrow \t_c^*/P$
    defines a cylinder-valued momentum map
    \begin{equation*}
        \mu_c:M \rightarrow \t_c^*/P \cong \mathbb{T}^{n-k-1}
    \end{equation*}
    for the $T_c$-action on $M$.
    
    Let $x \in M$ be some point, and let $y = \mu_c(x)$ be its image in $\t_c^*/P \cong \mathbb{T}^{n-k-1}$. 
    Since $\mu$ is $T$-invariant, it also follows that $\mu_c$ is $T_c$-invariant, so the $T_c$-orbit of $x$ is contained in the level set $\mu_c^{-1}(y)$.
    Moreover, the level set $\mu_c^{-1}(y)$ is connected by \cref{connected_level_sets_lemma}.

    By \cref{infinitesimal_splitting_lemma}, the action of $T_c$ is locally-free. Hence, the cylinder-valued momentum map $\mu_c \colon M \rightarrow \t_c^*/P$ is a submersion, and therefore by Ehresmann's theorem a locally trivial fibration. We then have the following long exact sequence of homotopy groups:
    \begin{equation*}
        \cdots \rightarrow \pi_2(\t_c^*/{P}, y) \rightarrow \pi_1(\mu_c^{-1}(y), x) \rightarrow \pi_1(M, x) \rightarrow \cdots.
    \end{equation*}
    Since the space $\t_c^*/P$ is homeomorphic to a torus, it is aspherical. It follows that the map $\pi_1(\mu_c^{-1}(y), x) \rightarrow \pi_1(M, x)$ is injective. Therefore, it is enough to show that the orbits of $C$ and their iterates are non-contractible in the level set $\mu_c^{-1}(y)$ to deduce that they are non-contractible in $M$.

    The real Chern class of the $T_c$-action on the level set $\mu_c^{-1}(y)$, given by Definition~\ref{real_chern_class_definition}, is an element of $H_{\basic}^2(\mu_c^{-1}(y), \t_c)$, which vanishes by \cref{chern_class_vanishes_proposition}.
    
    Next, we apply \cref{non_free_torus_bundle_lemma} which states that the subcircles of the orbits of a locally-free $T$-action on a closed connected manifold $N$, and their iterates, are non-contractible in $N$, provided that the real Chern class of the $T$-action vanishes. We apply this lemma to the action of $T_c$ on $\mu_c^{-1}(y)$, and deduce that the subcircles of the orbit $T_c \cdot x$, and their iterates, are non-contractible in the level set $\mu_c^{-1}(y)$. In particular, this is true for the orbits of the action of $C$, and this concludes the proof.
\end{proof}

We recall the statement of \cref{extension_corollary}:
\begin{quotation}
In dimension $6$,
a non-Hamiltonian symplectic circle action with contractible orbits
(e.g. the free action constructed by Kotschick~\cite{kotschick_contractible_orbits},
or the actions with fixed points constructed by McDuff~\cite{mcduff_circle_actions}, Tolman~\cite{tolman_isolated_points},
and Jang--Tolman~\cite{jang_tolman}) cannot be extended to any symplectic
$T^2$ action.

In dimension $2n$, 
a non-Hamiltonian symplectic circle action with contractible orbits
cannot be extended to a symplectic $T^{n-1}$ action with isotropic orbits.
\end{quotation}

\begin{proof}[Proof of \cref{extension_corollary}]
The second part of the theorem follows immediately 
from \cref{complexity_one_torsion_theorem}.

For the first part, suppose that a circle $C$ acts symplectically
on a six-dimensional manifold $(M, \omega)$, with null-homotopic orbits
(e.g., the aforementioned circle actions).
Assume by contradiction that $C \subset T^2$, 
and that there is a symplectic action of $T^2$ which extends the $C$-action.
Then by \cref{complexity_one_torsion_theorem},
the $T$-action has at least one non-isotropic orbit, 
and thus by \cref{cor:sigma_constant}, all of its orbits are non-isotropic. 
Hence, it is enough to find one isotropic $T$-orbit to get a contradiction.

Fix $x \in M$, and let $\gamma_t \colon C \to M$, for $t \in [0,1]$, 
be a smooth family of maps such that $\gamma_0$ is the orbit map 
$a \mapsto a \cdot x$ and $\gamma_1$ is the constant map $a \mapsto x$.
Choose a complementary subcircle $C'$, so that $T = C' \times C$.
Then $(a',a) \mapsto a' \cdot \gamma_t(a)$, for $(a',a) \in C' \times C$
and $t \in [0,1]$,
is a smooth homotopy between the orbit map $\rho_x \colon T \to M$
and a map $T \to M$ whose differential has rank at most $1$.
So the pullback $\rho_x^*\omega$ of $\omega$ by the orbit map $\rho_x \colon T \to M$
is a closed invariant two-form on $T$ whose cohomology class is zero, 
so it is the zero-form, hence $T \cdot x$ is isotropic, as required.
\end{proof}

The main content of \cref{thm:main} is in following proposition, which follows easily 
from \cref{complexity_one_torsion_theorem} 
and \cref{non_isotropic_non_contractible_lemma}:
\begin{proposition}\label{complexity_one_proposition}
Let a torus $T$ act symplectically on a 
closed connected symplectic manifold $(M, \omega)$.  
Suppose that $\dim M = 2n$ and $\dim T = n-1$.
If the action is non-Hamiltonian, then its orbit maps are not null-homotopic.
\end{proposition}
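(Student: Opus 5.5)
The plan is to split into two cases according to whether the orbits are isotropic. By \cref{cor:sigma_constant}, either all orbits are isotropic or none are, so these two cases are exhaustive.

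\emph{Non-isotropic case.} Suppose the orbits are not isotropic. Then \cref{non_isotropic_non_contractible_lemma} says that every orbit is non-contractible. Its proof actually shows more: the pullback $\rho_x^*\omega$ is a nonzero invariant two-form on $T$, hence it defines a nonzero class in $H^2(T;\R)$. This forces $\rho_x$ to be non-null-homotopic, which is exactly what we need. (Alternatively, invoke \cref{lem:contractible}: null-homotopic orbit maps would give some contractible orbit, a contradiction.)

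\emph{Isotropic case.} Suppose the orbits are isotropic. Since the action is non-Hamiltonian, $\t_{\ham}\neq\t$. By Benoist's \cref{maximal_hamiltonian_lemma}, $T_{\ham}$ is a proper subtorus. Pick a subcircle $C\subset T$ whose Lie algebra $\mathfrak c$ meets $\t_{\ham}$ only in $0$; such a circle exists because rational directions are dense in $\t$ and $\t_{\ham}$ is a proper subspace. Then the vector field generating $C$ is not Hamiltonian, so the $C$-action is non-Hamiltonian. Now \cref{complexity_one_torsion_theorem} applies (here $\dim T=n-1$) and shows that the $C$-orbits represent non-torsion elements of $\pi_1(M)$.

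To conclude, suppose for contradiction that the orbit maps $\rho_x\colon T\to M$ are null-homotopic. Restricting $\rho_x$ to $C\subset T$ gives the map $C\to M$, $a\mapsto a\cdot x$, which would then also be null-homotopic. Take $x$ to be a point with trivial $T$-stabilizer; such points exist by \cref{lem:contractible}, or one can note that the $C$-action is locally free by \cref{infinitesimal_splitting_lemma}. For such $x$, this restricted loop is the $C$-orbit of $x$ traversed once (or an iterate of it, if the stabilizer in $C$ is finite). A null-homotopic iterate contradicts the non-torsion conclusion.

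The only delicate point is this last one: matching ``orbit map restricted to $C$'' with ``the $C$-orbit or an iterate'' as a loop in $\pi_1(M)$, up to basepoint and free homotopy. Since torsion-ness is invariant under conjugation, free homotopy classes suffice, so this step should be routine.
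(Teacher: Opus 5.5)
Your proposal is correct and takes essentially the same route as the paper. You split via \cref{cor:sigma_constant}, handling the non-isotropic case with \cref{non_isotropic_non_contractible_lemma} and the isotropic case by applying \cref{complexity_one_torsion_theorem} to a non-Hamiltonian subcircle $C$ and restricting the orbit map of a free point to $C$; your only additions are constructing $C$ explicitly and restricting $\rho_x$ to $C$ directly, where the paper instead passes through a non-contractible free $T$-orbit and \cref{lem:contractible}.
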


\begin{proof}[Proof of \cref{complexity_one_proposition}]
If the action has at least one non-isotropic orbit then by
\cref{non_isotropic_non_contractible_lemma} all of the $T$-orbits
are non-contractible. In particular, the free $T$-orbits are
non-contractible, and therefore the orbit maps are not null-homotopic
(see \cref{lem:contractible}).
    
Otherwise, all of the orbits of the action are isotropic. In this case,
by \cref{complexity_one_torsion_theorem}, 
the action has a non-Hamiltonian subcircle action, 
all of whose orbits are non-contractible. Choose a free $T$-orbit, 
which exists since the union of the free orbits is dense 
(as a consequence of Koszul's slice theorem). 
Then it is non-contractible,
since it has a non-contractible subcircle, hence its orbit map is
not null-homotopic, and so all orbit maps are not null-homotopic 
(see \cref{lem:contractible}).
\end{proof}

Finally, using \cref{complexity_one_proposition},
we prove \cref{thm:main}, whose statement we recall:
\begin{quotation}
Let a torus $T$ act symplectically on a closed connected 
symplectic manifold $(M, \omega)$.  
Suppose that $\dim M = 2n$ and $\dim T \geq n-1$.
Then the action is Hamiltonian if and only if its orbit maps are null-homotopic.
\end{quotation}

\begin{proof}[Proof of \cref{thm:main}]
As we explained in \S\ref{subsec:ham-vs-nonham}, a Hamiltonian torus
action has fixed points, so its orbit maps are null-homotopic.
For the other direction, we assume that
the action is non-Hamiltonian, and we will show that its orbit maps are
not null-homotopic. Since $\dim T \geq n-1$, we can choose a subtorus
$H \subset T$ of dimension $n-1$ which acts in a non-Hamiltonian
way. Applying \cref{complexity_one_proposition} to the
$H$-action, we deduce that the orbit maps of the $H$-action are not
null-homotopic. It follows that the orbit maps of the $T$-action cannot
be null-homotopic, as well.
\end{proof}


\section{Free symplectic circle actions}
\label{proof_of_second_thm_section}

In this section, we discuss free symplectic circle actions with contractible orbits. 
Let $(M, \omega)$ be a closed connected symplectic manifold, 
equipped with a free symplectic circle action with generating vector field 
$X \in \mathfrak{X}(M)$. 
Suppose that the de Rham cohomology class $[\omega]$ is integral, that is, in the image of $H^2(M, \mathbb{Z}) \rightarrow H^2(M, \mathbb{R}) \cong H_{\dR}^2(M)$.
Then the \textbf{group of periods}
\begin{equation*}
    P := \left \{\int_A \iota_X \omega \mid A \in H_1(M, \mathbb{Z})\right \}
\end{equation*}
is a discrete subgroup of $\mathbb{R}$, 
and the circle action is generated by a circle-valued momentum map 
$\mu \colon M \rightarrow \mathbb{R}/P$.
The orbital circle-valued momentum map 
$\overline{\mu} \colon M/S^1 \rightarrow \mathbb{R}/P$ 
is a fiber bundle whose fibers are the reduced spaces with respect to $\mu$. 
The main aim of this section is to give a topological
characterization of which spaces can appear as reduced spaces of free 
symplectic circle actions with contractible orbits (see \cref{circle_action_characterization}).
For the proof, we need the following lemma and theorem.
\begin{lemma}\label{lemma_contractible_free_orbits_homotopy}
Let $S^1$ act freely and symplectically on a closed connected symplectic
manifold $(M, \omega)$, with a discrete group of periods $P$, and let
$\mu \colon M \rightarrow \mathbb{R}/P$ be a circle-valued momentum
map for the action. Moreover, let $y$ be a point in $\mathbb{R}/P$, and
let $c_1 \in H_{\dR}^2(\mu^{-1}(y)/S^1, \mathbb{R})$ be the real Chern
class of the principal $S^1$-bundle $\pi \colon \mu^{-1}(y) \rightarrow
\mu^{-1}(y)/S^1$ (here we use Chern classes in the usual sense; we don't use basic cohomology).
Then the orbits of the action are contractible if and
only if $\langle \pi_2(\mu^{-1}(y)/S^1), c_1 \rangle = \mathbb{Z}$.
\end{lemma}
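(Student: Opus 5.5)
Write $N := \mu^{-1}(y)$ and $B := N/S^1$, with $\pi\colon N \to B$ the principal circle bundle. The plan is to reduce contractibility in $M$ to contractibility in $N$, and then read off contractibility in $N$ from the homotopy long exact sequence of $\pi$.

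\emph{Step 1: reduce from $M$ to $N$.} Since the action is free, $\mu$ is a submersion: $d\mu = -\iota_X\omega$ and $X$ vanishes nowhere. Hence $\mu$ is a locally trivial fibration over the circle $\mathbb{R}/P$, with fiber $N$, connected by \cref{connected_level_sets_lemma}. The long exact sequence of this fibration, together with $\pi_2(\mathbb{R}/P)=0$, shows that $\pi_1(N,x)\to\pi_1(M,x)$ is injective. Moreover $\mu$ is $S^1$-invariant, because orbits of a circle action are one-dimensional and hence isotropic (\cref{lemma:isotropic_invariant}). So each orbit lies in a fiber, and it is contractible in $M$ if and only if it is contractible in $N$.

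\emph{Step 2: the bundle sequence.} For the principal bundle $S^1\to N\to B$ we have the exact sequence
\[
\pi_2(N)\to\pi_2(B)\xrightarrow{\ \partial\ }\pi_1(S^1)\cong\mathbb{Z}\to\pi_1(N)\to\pi_1(B).
\]
The orbit through $x$ is the image of the generator of $\pi_1(S^1)$. Since the action is free, the inclusion of the orbit is the orbit map $S^1\to N$. Thus the orbit is contractible in $N$ if and only if the generator lies in the kernel of $\pi_1(S^1)\to\pi_1(N)$. By exactness, this holds if and only if $\partial$ is surjective.

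\emph{Step 3: identify $\partial$ with pairing against $c_1$.} This is the main point to justify. The boundary map $\partial\colon\pi_2(B)\to\pi_1(S^1)=\mathbb{Z}$ sends a sphere $f\colon S^2\to B$ to the clutching degree of the pulled-back bundle $f^*N$, and that degree equals $\langle f^*c_1(N),[S^2]\rangle$ up to a universal sign. I would verify this by naturality of the long exact sequence: it suffices to treat $B=S^2$, where the Hopf bundle has Euler number $\pm1$ and $\partial$ is an isomorphism. Integrality of $c_1$ on spheres is automatic here, since $c_1$ is the image of the integral Euler class of a genuine principal $S^1$-bundle. It follows that $\operatorname{im}\partial = \langle \pi_2(B), c_1\rangle\subset\mathbb{Z}$, a subgroup independent of the sign convention. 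Surjectivity of $\partial$ is therefore equivalent to $\langle\pi_2(B),c_1\rangle=\mathbb{Z}$, which combined with Steps 1 and 2 proves the lemma.

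Steps 1 and 2 are formal. The only care needed is in Step 3, where one must match the homotopy-theoretic boundary map with the de Rham Chern class through the Hurewicz map.
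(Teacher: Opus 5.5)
Your proposal is correct and follows essentially the same route as the paper. You reduce to the level set $\mu^{-1}(y)$ via the fibration over the circle $\mathbb{R}/P$, using $\pi_2(\mathbb{R}/P)=0$, then apply the homotopy exact sequence of $S^1\to\mu^{-1}(y)\to\mu^{-1}(y)/S^1$ and identify the boundary map $\pi_2(B)\to\pi_1(S^1)$ with pairing against $c_1$ (the paper phrases this as finding $f\colon S^2\to B$ with $f^*E$ of Euler number one). Your Step 3 is more careful than the paper's, but reducing to $B=S^2$ leaves an arbitrary circle bundle over $S^2$, not just the Hopf bundle. So you need one more naturality step: pull back the Hopf bundle along a map of degree equal to the Chern number, or reduce directly to the universal bundle over $\mathbb{CP}^\infty$.
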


\begin{proof}
Let $x$ be some point in $\mu^{-1}(y)$, let $E := \mu^{-1}(y)$ be its
level set, and let $B := \mu^{-1}(y)/S^1$ be its quotient space. Following
the proof of \cref{complexity_one_proposition}, since
the action is free, the map $\mu$ is a submersion, and therefore
by Ehresmann's theorem a locally trivial fibration, and we have the
following long exact sequence:
\begin{equation*}
        \cdots \rightarrow \pi_2(\mathbb{R}/P, y) \rightarrow \pi_1(E, x) \rightarrow \pi_1(M, x) \rightarrow \cdots
\end{equation*}
Because $\mathbb{R}/P$ is homeomorphic to a circle, its second homotopy
group is trivial, hence the map $\pi_1(E, x) \rightarrow \pi_1(M, x)$ 
is injective. It follows that the orbits of the circle action are
contractible in $M$ if and only if they are contractible in the level set $E$.

Since the $S^1$ action on the level set $E$ is free, the quotient map
$\pi \colon E \rightarrow B$ defines a principal $S^1$-bundle, and we
have the long exact sequence:
\begin{equation*}
        \cdots \rightarrow \pi_2(B, \pi(x)) \rightarrow \pi_1(S^1, e) \rightarrow \pi_1(E, x) \rightarrow \cdots
\end{equation*}
Thus, the orbits of the $S^1$ action are contractible in $E$ if and only if 
the map $\pi_1(S^1, x) \rightarrow \pi_1(E, x)$ is trivial, and this happens 
if and only if the map $\pi_2(B, y) \rightarrow \pi_1(S^1, x)$ is surjective. 
This is true if and only if there exists a map $f \colon S^2 \rightarrow B$ 
such that the pullback bundle $f^*E \rightarrow S^2$ has Euler number one. 
Because the Euler number and the Chern number of $f^*E \rightarrow S^2$ agree, 
the Euler number is one precisely when $f^*c_1 = [S^2]$, 
where $c_1 \in H_{\dR}^2(B, \mathbb{R})$ is the Chern class of the 
principal $S^1$-bundle $\pi \colon E \rightarrow B$, and $[S^2]$ is the 
positive generator of $H_2(S^2, \mathbb{Z})$. This concludes the proof.
\end{proof}

\begin{theorem}[Reformulation of Theorem 14 in~\cite{fernandez_gray_morgan}]
\label{theorem_gray_morgan}
Let $(N^{2n}, \Omega)$ be a closed connected symplectic manifold. 
Then the following are equivalent:
\begin{enumerate}
\item\label{grey_prop_one} There exists a closed connected symplectic manifold with a free symplectic circle action, generated by a circle-valued Hamiltonian, and with some reduced space symplectomorphic to $(N, \Omega)$.
\item\label{grey_prop_two} There exists a cohomology class $c \in H^2(N, \mathbb{Z})$, and a family of symplectic forms $\Omega_t$ on $N$, satisfying the following properties:
        \begin{enumerate}
            \item $\Omega_0 = \Omega$.
            \item $[\Omega_t] = [\Omega] + tc$.
            \item There exists an orientation-preserving diffeomorphism $\Psi \colon N \rightarrow N$, such that $\Psi^*c = c$ and $\Psi^*\Omega_t = \Omega_{t - 1}$ for every $t$.
        \end{enumerate}
\end{enumerate}
\end{theorem}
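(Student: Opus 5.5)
The plan is to prove the equivalence in two directions, using the mapping torus construction.

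For (1)$\Rightarrow$(2), let $S^1$ act freely on $(M,\omega)$ with circle-valued momentum map $\mu\colon M\to\R/P$, where $P=p\Z$. After rescaling, we may take $p=1$. The orbital map $\overline{\mu}\colon M/S^1\to\R/\Z$ is a fiber bundle, and its fiber over $y$ is $N=\mu^{-1}(y)/S^1$ with the reduced form $\Omega$.

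\begin{enumerate}
\item Pull the level sets back to the universal cover $\R\to\R/\Z$. Using a lift of $y$, identify $\mu^{-1}(y+t)/S^1$ with $N$ for all $t\in\R$. This can be done by flowing along an equivariant vector field $X$ with $d\mu(X)=1$; such a field exists by the argument of \cref{eq_flow_lemma}, since the action is free.
\item Let $\Omega_t$ be the pullback of the reduced form at $y+t$. The Duistermaat--Heckman variation formula \eqref{dh_variation_equation} gives $[\Omega_t]=[\Omega]+tc$, where $c$ is the Chern class of the principal bundle $\mu^{-1}(y)\to N$. Because the circle action is free, $c$ is integral.
\item Set $\Psi$ to be the time-one flow, viewed as a diffeomorphism of $N$ (the monodromy of $\overline{\mu}$). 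Since $y+1=y$ in $\R/\Z$, the relation $\Psi^*\Omega_t=\Omega_{t-1}$ holds, after fixing the direction convention.
\item The monodromy preserves the principal bundle up to isomorphism, so $\Psi^*c=c$. It is orientation preserving because it is isotopic through fiber identifications, or equivalently because it preserves the symplectic orientation.
\end{enumerate}

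For (2)$\Rightarrow$(1), reverse the construction.

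\begin{enumerate}
\item Take a principal circle bundle $E\to N$ with Chern class $c$, and a connection $\alpha$ with curvature representing $c$.
\item By Moser's trick, after adjusting the family by an isotopy (the classes vary affinely), we may write $\Omega_t=\Omega+t\,d\alpha+d\beta_t$. This adjustment must be made compatibly with $\Psi$.
\item On $E\times\R$, with coordinate $t$, put the form $\pi^*\Omega_t+dt\wedge\alpha+\dots$. This is the standard minimal-coupling form. It is symplectic for the circle action, and $t$ is a momentum map.
\item Since $\Psi^*c=c$, the map $\Psi$ lifts to an $S^1$-equivariant bundle automorphism $\tilde\Psi$ of $E$.
\item Quotient $E\times\R$ by $(e,t)\mapsto(\tilde\Psi(e),t+1)$. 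This yields a closed manifold with a free circle action and circle-valued momentum map $t \bmod 1$. Its reduced space at $0$ is $(N,\Omega)$.
\end{enumerate}

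The main obstacle is the converse direction. Two things must be made compatible there: the closed two-form on $E\times\R$ must be exactly invariant under the deck transformation, and it must be nondegenerate. I would first try to average or choose $\alpha$ and $\beta_t$ equivariantly with respect to $\tilde\Psi$, using $\Psi^*\Omega_t=\Omega_{t-1}$ to define the form on one fundamental domain $[0,1]$ and extend it by the deck relation. Nondegeneracy then follows from nondegeneracy of $\Omega_t$ on the horizontal distribution together with the $dt\wedge\alpha$ term. Since this is the content of Fern\'andez--Gray--Morgan's Theorem 14, in the paper I would cite it and verify only the dictionary between their formulation and ours.
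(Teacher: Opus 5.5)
Your approach is the paper's. The paper gives no proof of its own: it cites Fern\'andez--Gray--Morgan, and in the proof of \cref{circle_action_characterization} it records exactly your dictionary. There, $c$ is the Chern class of the level set over $N$, $\Psi$ is the monodromy, and $M=E\times\R/(x,t)\sim(\tilde\Psi(x),t+1)$.

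\textbf{The rescaling step fails as written.} The step ``after rescaling, we may take $p=1$'' does not hold for a fixed $(N,\Omega)$. Rescaling $\omega$ by $1/p$ changes the reduced space from $(N,\Omega)$ to $(N,\Omega/p)$. Undoing the rescaling is the same as reparametrizing your period-$p$ family, $\Omega'_t:=\Omega_{pt}$. This gives $\Psi^*\Omega'_t=\Omega'_{t-1}$, but $[\Omega'_t]=[\Omega]+t\,pc$. The class $pc$ is integral only when $p\in\Z$, and it is the Chern class only when $p=1$.

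For the equivalence as literally stated, this does no harm, because both conditions hold for every $(N,\Omega)$. Condition (2) is satisfied by $c=0$, $\Omega_t\equiv\Omega$, $\Psi=\mathrm{id}$. Condition (1) is satisfied by $N\times T^2$ with the circle rotating one factor of $T^2$. The real content, and what \cref{circle_action_characterization} actually uses, is the dictionary. For that, $P=\Z$ has to be a hypothesis in (1), not something obtained by rescaling.

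\textbf{The obstacle you defer in the converse.} It needs neither a Moser adjustment of $\Omega_t$ nor averaging, and averaging is unavailable for a $\Z$-action anyway.
\begin{enumerate}
\item Look for $\omega=\pi^*\Omega_t+dt\wedge\gamma_t$, with each $\gamma_t$ a connection form on $E$.
\item Then $d\omega=0$ exactly when $d\gamma_t=\pi^*\partial_t\Omega_t$. Nondegeneracy and $\iota_X\omega=-dt$ are automatic.
\item Invariance under the deck map is the condition $\tilde\Psi^*\gamma_{t+1}=\gamma_t$.
\item Your $\gamma'_t=\alpha+\pi^*\partial_t\beta_t$ has the right curvature. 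The difference $\sigma_t:=\tilde\Psi^*\gamma'_{t+1}-\gamma'_t$ is basic and closed, by differentiating $\Psi^*\Omega_{t+1}=\Omega_t$.
\item Take a cutoff $\chi$ equal to $1$ near $0$ and to $0$ near $1$. Then $\gamma'_t+\chi(t)\sigma_t$ still has the right curvature.
\item Restrict this family to $[0,1)$ and extend it by $\tilde\Psi^*\gamma_{t+1}=\gamma_t$. The result is smooth, since near each integer both one-sided definitions agree with $\gamma'_t$ or its pullback.
\end{enumerate}
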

\begin{theorem}\label{circle_action_characterization}
Let $(N^{2n}, \Omega)$ be a closed connected symplectic manifold. Then
the following are equivalent:
\begin{enumerate}
\item 
There exists a closed connected symplectic manifold with a free symplectic 
circle action with contractible orbits, generated by a circle-valued 
Hamiltonian, with some reduced space symplectomorphic to $(N, \Omega)$.

\item\label{second_property_in_theorem} 
There exists a cohomology class $c \in H^2(N, \mathbb{Z})$, and a family of 
symplectic forms $(\Omega_t)_{t \in \R}$ on $N$, with the following properties:
\begin{enumerate}
\item $\Omega_0 = \Omega$.
\item $[\Omega_t] = [\Omega] + tc_\R$,
where $c_\R$ is the image of $c$
under the homomorphism $H^2(N;\Z) \to H^2_{\dR}(N)$.
\item\label{part_c} There exists an orientation-preserving diffeomorphism 
$\Psi \colon N \rightarrow N$, such that $\Psi^*c = c$ and 
$\Psi^*\Omega_t = \Omega_{t - 1}$ for every $t$.
\item $\langle c, \pi_2(N)\rangle = \mathbb{Z}$.
\end{enumerate}
\end{enumerate}
\end{theorem}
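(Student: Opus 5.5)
The plan is to combine \cref{theorem_gray_morgan} with \cref{lemma_contractible_free_orbits_homotopy}. The only new ingredient is the extra condition (d), $\langle c, \pi_2(N)\rangle = \Z$. Everything rests on identifying the class $c$ appearing in \cref{theorem_gray_morgan} with the Chern class of the circle bundle $\mu^{-1}(y) \to \mu^{-1}(y)/S^1 \cong N$.

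\emph{From (1) to (2).} Suppose $(M,\omega)$ carries a free symplectic circle action with contractible orbits, generated by a circle-valued Hamiltonian $\mu \colon M \to \R/P$, and that some reduced space is symplectomorphic to $(N,\Omega)$. By \cref{theorem_gray_morgan} we obtain $c$, $\Omega_t$ and $\Psi$ satisfying (a)--(c). I would reconstruct how these arise, to check that $c$ really is the integral Chern class $e$ of the principal bundle $E = \mu^{-1}(y) \to N$. Trivialize $M$ over the universal cover $\R$ of $\R/P$ by flowing along an equivariant vector field $X$ with $d\mu(X) = 1$. Then $\Omega_t$ is the reduced form at level $y+t$, pulled back to $N$, and $\Psi$ is the monodromy after one period. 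By the Duistermaat--Heckman formula \eqref{dh_variation_equation}, $[\Omega_t] = [\Omega] + t\,c_1$, where $c_1$ is the real Chern class. Hence $c$ can be taken to be $e$, up to the sign and normalization conventions, which only rescale by the generator of $P$; I would normalize $P = \Z$. Since $\Psi$ is induced by an equivariant bundle automorphism of $E$, it satisfies $\Psi^* e = e$. Finally, \cref{lemma_contractible_free_orbits_homotopy} converts contractibility of the orbits into $\langle \pi_2(N), c_1\rangle = \Z$. The pairing of the real class with spherical classes equals that of the integral class $e$, which gives (d).

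\emph{From (2) to (1).} Conditions (a)--(c) feed into \cref{theorem_gray_morgan}, whose construction (Fern\'andez--Gray--Morgan) I would make explicit. Take the principal $S^1$-bundle $E \to N$ with Chern class $c$. Since $\Psi^* c = c$, the map $\Psi$ lifts to an equivariant bundle automorphism $\tilde\Psi$ of $E$. Form the mapping torus $M = (E \times \R)/\Z$, where the generator acts by $(p,t) \mapsto (\tilde\Psi(p), t+1)$. Put on it the closed invariant form $\pi^*\Omega_t + d(t\alpha)$-type coupling form, namely a minimal-coupling form built from a connection $\alpha$ on $E$ with curvature chosen so that the levels carry $\Omega_t$. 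The momentum map is then the projection to $\R/\Z$, and the reduced space at $0$ is $(N,\Omega)$. The level sets are copies of $E$, so by \cref{lemma_contractible_free_orbits_homotopy}, condition (d) for the Chern class $c$ of $E$ gives contractible orbits in $M$.

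The main obstacle is matching conventions: the orbit-contractibility criterion in \cref{lemma_contractible_free_orbits_homotopy} concerns the Chern class of the actual bundle over the reduced space, so I must ensure that the $c$ produced by or fed into \cref{theorem_gray_morgan} is exactly that integral class. In the forward direction this requires choosing the normalization of $P$ and checking that no torsion ambiguity affects the $\pi_2$ pairing; torsion does not matter, since the pairing with spherical classes only sees the image in real cohomology via integrality. In the reverse direction it requires that the construction uses the bundle with class $c$ itself, and not merely some bundle with the same real class.
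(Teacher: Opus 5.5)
This is essentially the paper's proof: both directions run through the Fern\'andez--Gray--Morgan mapping-torus correspondence of \cref{theorem_gray_morgan}, with $c$ identified as the Chern class of the level-set bundle over $N$, after which \cref{lemma_contractible_free_orbits_homotopy} converts contractibility of the orbits into condition (d); you are also right that torsion in $c$ is invisible to the $\pi_2$-pairing. One caution: ``normalize $P=\Z$'' is not a harmless rescaling---if $P=a\Z$, the period-one family has $c=a\,e$, so $\langle c,\pi_2(N)\rangle=a\Z$ even when the orbits are contractible, and rescaling $\omega$ to make $a=1$ replaces $\Omega$ by $\Omega/a$; so $P=\Z$ has to come from the convention that the circle-valued Hamiltonian is a map to $\R/\Z$ with connected level sets, which is also what the paper's one-line identification of $c$ with the Chern class tacitly assumes.
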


The difference between the first property of \cref{theorem_gray_morgan} 
and \cref{circle_action_characterization} is the new requirement
that the orbits are contractible, and the difference between
the second property of \cref{theorem_gray_morgan} and
\cref{circle_action_characterization} is the new requirement that
$\langle c, \pi_2(N)\rangle = \mathbb{Z}$.

\begin{proof}[Proof of \cref{circle_action_characterization}]
    Following the proof of \cref{theorem_gray_morgan} in~\cite{fernandez_gray_morgan}, the relation between property~\eqref{grey_prop_one} and property~\eqref{grey_prop_two} in \cref{theorem_gray_morgan} is as follows. Let $N$, $c$, and $\Psi$ be as in property~\eqref{grey_prop_two}. Then taking the principal $S^1$-bundle $E$ over $N$ with Chern class $c$, we can lift $\Psi$ to a diffeomorphism $\tilde \Psi \colon E \rightarrow E$, and the quotient space
    \begin{equation*}
        \bigslant{E \times \mathbb{R}}{(x, t)\sim(\tilde \Psi(x), t + 1)}
    \end{equation*}
    can be endowed with an invariant symplectic form precisely if we have $\Omega_t$ as above.

    Thus, in the language of \cref{theorem_gray_morgan}, $c$ is identified with the Chern class of each level set, so \cref{lemma_contractible_free_orbits_homotopy} implies that the orbits are contractible if and only if $\langle c, \pi_2(N)\rangle = \mathbb{Z}$, and this finishes the proof.
\end{proof}
A K3 surface is the only known symplectic four-manifold to satisfy
Property~\eqref{second_property_in_theorem} of
\cref{circle_action_characterization}.
Kotschick \cite{kotschick_contractible_orbits} 
used non-trivial facts about the geometry of 
K3 surfaces for constructing the diffeomorphism $\Psi$ 
in Property~\eqref{part_c} of \cref{circle_action_characterization}.

\cref{circle_action_characterization} can be used for understanding
where \textit{not} to look for examples of free symplectic circle actions 
with contractible orbits:
{
\begin{corollary}\label{cor:coh_constraints}
Let $(N^{2n}, \Omega)$ be a closed connected symplectic manifold. 
Suppose that for every $c \in H^2(N;\Z)$,
if $\langle c, \pi_2(N)\rangle = \mathbb{Z}$, 
then there exists $j > 0$ such that ${[\Omega]}^{n-j} \wedge c_\R^j \neq 0$.
Then $(N, \Omega)$ is not symplectomorphic to a reduced space 
of a free symplectic circle action with contractible orbits.
\end{corollary}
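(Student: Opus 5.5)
We argue by contraposition, using \cref{circle_action_characterization}. Suppose $(N,\Omega)$ is symplectomorphic to a reduced space of a free symplectic circle action with contractible orbits. As in \cref{circle_action_characterization}, we may first perturb the symplectic form so that it is integral and the action is generated by a circle-valued Hamiltonian. Then property~\eqref{second_property_in_theorem} of \cref{circle_action_characterization} holds. We thus obtain a class $c \in H^2(N;\Z)$ with $\langle c, \pi_2(N)\rangle = \Z$, a family of symplectic forms $\Omega_t$ with $[\Omega_t] = [\Omega] + t c_\R$ and $\Omega_0 = \Omega$, and an orientation-preserving diffeomorphism $\Psi$ with $\Psi^* c = c$ and $\Psi^*\Omega_t = \Omega_{t-1}$. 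The goal is to show that ${[\Omega]}^{n-j} \wedge c_\R^j = 0$ for every $j>0$, which contradicts the hypothesis.

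The key step is a volume computation. Consider the polynomial
\[
V(t) := \int_N [\Omega_t]^n = \sum_{j=0}^n \binom{n}{j} t^j \int_N [\Omega]^{n-j}\wedge c_\R^j .
\]
Since $\Psi$ is an orientation-preserving diffeomorphism with $\Psi^*\Omega_t = \Omega_{t-1}$, we get $V(t) = V(t-1)$ for all real $t$. A polynomial that is invariant under integer translation is constant. Hence all coefficients of $t^j$ with $j>0$ vanish, that is, $\int_N [\Omega]^{n-j}\wedge c_\R^j = 0$ for all $j \ge 1$.

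This gives vanishing of the top-degree pairings. However, the hypothesis in the corollary is phrased as ${[\Omega]}^{n-j} \wedge c_\R^j \neq 0$, which is naturally read as a statement in $H^{2n}(N;\R) \cong \R$, i.e.\ as these integrals. With that reading, the contradiction is immediate.

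The main subtlety is this interpretation step. If the classes ${[\Omega]}^{n-j}\wedge c_\R^j$ are meant to be read in $H^{2n}$, the proof is complete. If instead one wants vanishing of lower products such as $c_\R^j$ in intermediate degrees, the argument needs more. In that case I would exploit nondegeneracy: $\Omega_t$ is symplectic for every $t$, so $[\Omega_t]^n \neq 0$ for every $t$, and moreover $\Psi^*$ acts on $H^*(N)$ fixing $c_\R$ and sending $[\Omega]$ to $[\Omega] - c_\R$. However, the top-degree reading is clearly the one intended, since the exponents sum to $n$. I therefore expect no real obstacle beyond checking that orientation preservation makes $\int_N \Psi^*\Omega_t^n = \int_N \Omega_t^n$.
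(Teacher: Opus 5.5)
Your proposal is correct and is essentially the paper's proof: take $c$, $\Omega_t$, $\Psi$ from \cref{circle_action_characterization}, observe that $t\mapsto[\Omega_t]^n$ (or its integral) is $1$-periodic because $\Psi$ is orientation-preserving, and conclude that the positive-degree coefficients $\binom{n}{j}[\Omega]^{n-j}\wedge c_\R^j$ of this polynomial vanish. Two of your asides should simply be dropped. There is no interpretation issue, since $[\Omega]^{n-j}\wedge c_\R^j$ automatically lies in $H^{2n}(N)$. And no perturbation of the symplectic form is needed, because speaking of a reduced space already presupposes the circle-valued Hamiltonian required in \cref{circle_action_characterization}; perturbing would in any case replace $(N,\Omega)$ by a different reduced space.
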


\begin{proof}
Let $\Omega_t$ and $\Psi$ be as in 
\cref{circle_action_characterization}.
Then the function $t \mapsto [\Omega_t]^n$ from $\R$ to $H^{2n}_{\dR}(N)$ 
is $1$-periodic.  Because
\begin{equation}\label{poly_volume}
    {[\Omega_t]}^n = \sum_{j=0}^n\binom{n}{j}t^j{[\Omega]}^{n-j} \wedge c^j
\end{equation}
is polynomial in $t$, this implies that the coefficient
${[\Omega]}^{n-j} \wedge c^j$ is zero for every $j > 0$. 
\end{proof}
}
\begin{example}\label{e:contractible_examples}
By \cref{cor:coh_constraints}, the following closed connected symplectic
manifolds cannot appear as reduced spaces of free symplectic circle
actions with contractible orbits:
\begin{enumerate}
\item\label{item_aspherical} Aspherical symplectic manifolds.
\item\label{item_betti}
Symplectic manifolds with second Betti number equal to one. 
Examples include symplectic surfaces, complex projective spaces, 
complex Grassmannians, and Grassmannian manifolds associated with arbitrary 
compact connected semisimple Lie groups, as in Section 7 of \cite{caviedes}
(Section 5 of the arXiv version \cite{caviedes_arxiv}).
\end{enumerate}
\end{example}

\begin{remark}
If the reduced space $N$ is aspherical, it follows from long exact 
sequences of homotopy groups that $M$ is also aspherical. Thus, 
Example~\ref{e:contractible_examples}.\eqref{item_aspherical}
also follows from Lupton and Oprea's work~\cite{lupton_oprea},
by which a symplectic circle action on an aspherical
closed symplectic manifold must have non-contractible orbits 
(see also~\cite{atallah_remarks_on_circle_actions, ono}).
\end{remark}


\section{Open questions}\label{open_questions_section}
In this section we raise several open questions on the topology of non-Hamiltonian symplectic torus action.

\subsection{Actions with some contractible orbits}\label{open_questions_some_contractible_subsection}\ 

The constructions in McDuff~\cite{mcduff_circle_actions}, Kotschick~\cite{kotschick_contractible_orbits}, Tolman~\cite{tolman_isolated_points}, and Jang--Tolman~\cite{jang_tolman} give examples of symplectic non-Hamiltonian circle actions whose orbits are all contractible. In the smooth (non-symplectic) setting, the existence of one contractible orbit does not imply that all orbits are contractible (\cref{example_fixed_non_contractible}). Hence, it is natural to ask the following two questions (see also \cref{table_contr_questions} for a summary of the different cases):
\begin{question}\label{fixed_non_contractible_question}
Does there exist a symplectic (non-Hamiltonian) circle action on a closed
connected symplectic manifold, with fixed points, 
such that not all of its orbits are contractible?
\end{question}

\begin{question}\label{some_non_contractible_question}
Does there exist a symplectic (non-Hamiltonian) locally-free circle action on a closed
connected symplectic manifold, such that some but not all of its orbits are contractible?
\end{question}

In dimension four, the answer to both questions
is negative by \cref{corollary_tfae} (or by \cite[Prop.2]{mcduff_circle_actions} for \cref{fixed_non_contractible_question}). For free actions, the answer to \cref{some_non_contractible_question} is negative by \cref{lem:contractible}. It seems plausible that constructions
inspired by~\cite{kotschick_contractible_orbits, tolman_isolated_points,
jang_tolman} might resolve these questions positively in dimension six.

\begin{table}[ht]
\centering
\renewcommand{\arraystretch}{1.5}

\begin{tabular}{| >{\raggedright\arraybackslash}p{0.35\textwidth} | >{\centering\arraybackslash}p{0.3\textwidth} | >{\centering\arraybackslash}p{0.3\textwidth} |}
\hline
& \textbf{Has fixed points} & \textbf{No fixed points} \\
\hline \hline

\textbf{All orbits are contractible} &
McDuff~\cite{mcduff_circle_actions}, Tolman~\cite{tolman_isolated_points}, Jang--Tolman~\cite{jang_tolman} &
Kotschick~\cite{kotschick_contractible_orbits}, Tolman~\cite{tolman_isolated_points} \\
\hline

\textbf{Some but not all orbits are contractible} &
No known examples (\cref{fixed_non_contractible_question}) &
No known examples (\cref{some_non_contractible_question}) \\
\hline

\end{tabular}

\vspace{10pt}
\caption{The state of knowledge concerning the existence of symplectic non-Hamiltonian circle actions with different topological properties (in dimension 6).}
\label{table_contr_questions}
\end{table}

\subsection{Reduced spaces of actions with contractible orbits}\label{open_questions_reduced_spaces_subsection}\ 

In all known constructions of free symplectic circle actions with contractible orbits, the reduced spaces are K3 surfaces. It would be interesting to find other examples:
\begin{problem}\label{reduced_spaces_open_question}
    Find a free symplectic circle action on a 6-dimensional closed connected symplectic manifold, whose reduced space is not homeomorphic to a K3 surface.
\end{problem}
See \cref{cor:coh_constraints} for cohomological constraints for a possible example.
Inspired by Tolman and Jang's recent work~\cite{jang_tolman},
seeking non-Hamiltonian circle actions with as few as possible fixed points,
we can also ask a quantitative version of this problem:
\begin{question}\label{reduced_spaces_quantitative_open_question}
What is the smallest possible second Betti number $b_2$ of a reduced space
for a free symplectic circle action with circle-valued Hamiltonian,
with contractible orbits, on a symplectic $6$-manifold?
\end{question}
By Kotschick's example, together with 
Example~\ref{e:contractible_examples}.\eqref{item_betti}, 
the answer is somewhere in the range $2 \le b_2 \le 22$.

\subsection{Actions with non-isotropic orbits}\label{open_questions_non_isotropic_subsection}\ 

Throughout the paper, our efforts revolved around understanding
the topology of symplectic $T^{n-1}$ actions
with isotropic actions. We believe that many of our results can
be extended to actions whose orbits are not isotropic. This direction
will help getting a more complete understanding of symplectic
torus actions, and in particular may be useful for future classifications
of symplectic torus actions with non-isotropic orbits, extending the works
of Benoist~\cite{benoist} and Duistermaat-Pelayo~\cite{duistermaat_pelayo}.

In particular, we believe that \cref{complexity_one_torsion_theorem} has a natural generalization to actions with non-isotropic orbits, if one chooses the dimension of the acting torus correctly:
\begin{conjecture}\label{non_isotropic_conjecture}
Let an $m$-dimensional torus $T$ act symplectically on a $2n$-dimensional closed connected symplectic manifold $(M, \omega)$, let $2r$ be the rank of the two-form $\omega_T$ from \cref{cor:sigma_constant},
and assume that
    \begin{equation}\label{eq_conjecture}
        m \ge n + r - 1.
    \end{equation}
    Then for every non-Hamiltonian subcircle action of $T$, the orbits of the circle action and all of their iterates are non-contractible.
\end{conjecture}
\cref{complexity_one_torsion_theorem} is a special case of \cref{non_isotropic_conjecture}, when $r = 0$.

Consider the product of a complexity one space of dimension $2n-2r$ with a $2r$-dimensional symplectic torus acting on itself; it satisfies $m = n + r - 1$, and indeed every non-Hamiltonian subcircle action of $T$ has non-contractible orbits which are not torsion.

In contrast, suppose that $(M, \omega)$ is a six-dimensional symplectic manifold with a non-Hamiltonian symplectic circle action with contractible orbits (as in~\cite{jang_tolman, kotschick_contractible_orbits, mcduff_circle_actions,tolman_isolated_points}). Moreover, let $(T^2, dp \wedge dq)$ be a symplectic 2-torus, given with its natural action on itself. Then the product space $(M \times T^2, \omega + dp \wedge dq)$ is an eight-dimensional symplectic manifold, with a $T^3$ action which has a non-Hamiltonian symplectic subcircle action with contractible orbits. This example satisfies $m = n + r - 2$, so the conclusion of \cref{non_isotropic_conjecture} does not apply in this case. Since $m = n-1$ holds, matching the original statement of \cref{complexity_one_torsion_theorem}, this demonstrates why the refined dimension constraint on the acting torus (Equation~\eqref{eq_conjecture}) is necessary for actions with non-isotropic orbits.

\begin{remark}
Roughly speaking, a possible (informal!) strategy for proving \cref{non_isotropic_conjecture}, which also gives intuition for Equation~\eqref{eq_conjecture}, is as follows. We consider only the case where $m = n + r - 1$; the other cases follows easily from this case. Let a torus $T^{n + r - 1}$ act on a $2n$-dimensional symplectic manifold, 
such that $\omega_T$ (from \cref{cor:sigma_constant}) has rank $2r$.
Take a quotient by the ``symplectic part'' of the $T$-action, which is of dimension $2r$, to get a symplectic orbifold of dimension $2n - 2r$, with a $T^{n - r - 1}$ action with isotropic orbits. Since the dimension of the acting torus is one less than half the dimension of the orbifold, follow the proof of \cref{complexity_one_torsion_theorem} in the setting of orbifolds to deduce the result.
To follow through with this strategy, one would need to establish the version of Moser's trick for complexity one spaces from~\cite{centered_hamiltonians, tall_existence} (see \cref{complexity_one_isotopy}) in the setting of orbifolds. This relates to the recent classification of Hamiltonian $S^1$-actions on compact 4-orbifolds, with isolated cyclic singular points, given in~\cite{orbifold_ham_s1} by Godinho, Mwakyoma-Oliveira, and Sepe. In their paper, they suggest the idea of generalizing their classification to higher dimensional complexity one orbifolds. For this, it seems likely that a similar generalization of \cref{complexity_one_isotopy} will be needed.
\end{remark}


\appendix

\section{Equivariant formality} \label{equivariant_formality_appendix}

A Hamiltonian action of a torus $T$ on a closed symplectic manifold $(M, \omega)$ must be equivariantly formal.  
This is proved by applying Morse-Bott theory to components of the momentum map; see Atiyah-Bott~\cite{atiyah_bott} and Kirwan~\cite{kirwan}.
The converse is also true:

\begin{lemma}\label{lem:ham_if_eqformal}
Let a torus act faithfully and symplectically
on a closed connected symplectic manifold $(M, \omega)$.
If the action is equivariantly formal, then it is Hamiltonian.
\end{lemma}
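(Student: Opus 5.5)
Because the action is symplectic, for every $\xi \in \t$ the one-form $\iota_{\xi_M}\omega$ is closed. The plan is to show that its de Rham class $[\iota_{\xi_M}\omega] \in H^1_{\dR}(M)$ vanishes for every $\xi$. Then each $\xi_M$ is Hamiltonian, and \cref{lem:ham_is_invariant} supplies $T$-invariant Hamiltonians, so the resulting linear map $\t \to C^\infty(M)$ is a momentum map. (An additive constant is irrelevant here: $T$ is abelian, so no equivariance beyond invariance is required.) The tool is the Cartan model. Equivariant formality means that the restriction $H^*_T(M;\R) \to H^*(M;\R)$ is surjective; equivalently, $H^*_T(M) \cong H^*(M)\otimes S(\t^*)$ as $S(\t^*)$-modules.

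The first step is to lift $[\omega]$ to an equivariantly closed form. By formality there is an equivariantly closed form $\tilde\omega$ of total degree $2$ with $\tilde\omega(0)$ cohomologous to $\omega$. After subtracting $d_T$ of an invariant $1$-form (degree-zero parts are constants, so the only $d_T$-exact correction comes from $1$-forms), we may assume $\tilde\omega(\xi) = \omega + \phi(\xi)$ with $\phi \colon \t \to \Omega^0(M)^T$ linear. Invariant representatives exist because $T$ is compact and connected, and averaging commutes with $d_T$. I will make this normalization precise by averaging, and I will note that the correction changes $\omega$ only by an exact invariant form $d\beta$.

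The second step is to read off the equivariant closedness condition. We have $d_T\tilde\omega(\xi) = d\omega - \iota_{\xi_M}\omega + d\phi(\xi)$, with the sign set by the convention $d_T = d - \iota_{\xi_M}$. Vanishing of this expression gives $\iota_{\xi_M}(\omega + d\beta) = d\phi(\xi)$. Since $\beta$ is invariant, $\iota_{\xi_M}d\beta = -d\iota_{\xi_M}\beta$ is exact. Therefore $\iota_{\xi_M}\omega$ is exact for all $\xi$, which is exactly what we wanted.

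The main obstacle is the first step: arranging that a lift exists with precisely the form $\omega$ plus an exact invariant correction plus a linear function part. This needs care with how formality is phrased (surjectivity onto $H^2(M)$ with real coefficients) and with the structure of degree-$2$ Cartan cochains, namely $\Omega^2(M)^T \oplus (\t^*\otimes\Omega^0(M)^T)$. Faithfulness plays no role in this argument beyond the setup.
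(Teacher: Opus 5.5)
Your proposal is correct and follows essentially the same route as the paper: lift $[\omega]$ through the surjection $H^2_T(M)\to H^2(M)$ in the Cartan model, write the resulting closed degree-two cochain as $\beta+\mu$ with $\omega=\beta+d\eta$ for an invariant one-form $\eta$, and read off from $d_T$-closedness that $\iota_{\xi_M}\omega$ is exact. The only difference is cosmetic: the paper writes the momentum map explicitly as $\Phi(\xi)=\mu(\xi)+\iota_{\xi_M}\eta$, which is automatically linear in $\xi$ and $T$-invariant, so your appeal to \cref{lem:ham_is_invariant} is unnecessary, as is your wavering between normalizing the two-form part to be exactly $\omega$ and carrying the $d\beta$ correction (either choice works).
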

This lemma follows from a work of Allday--Hauschild--Puppe~\cite[Remark (3.2) Item (3)]{allday_hauschild_puppe}. In this appendix, we give a direct proof.

We first recall the definition of equivariant formality.
Let $ET \rightarrow BT$ be the universal $T$-bundle.
For every $T$-action on a manifold $M$, its Borel construction is 
\begin{equation*}
    M_T := ET \times_T M,
\end{equation*}
and the \textbf{equivariant cohomology} of $M$ is $H_T^*(M) := H^*(M_T)$,
which is a $H^*(BT)$-module. We work with real coefficients.
The $T$-manifold $M$ is \textbf{equivariantly formal} 
if $H_T^*(M)$ is a free $H^*(BT)$-module. The Borel construction $M_T$ fibers
over $BT$ with fiber $M$. Because $T$ is connected,
the inclusion of the fiber induces a well-defined map
\begin{equation*}
    s:H^*_T(M) \to H^*(M)
\end{equation*}
which is surjective if and only if the $T$-manifold $M$ is equivariantly formal.
For more information, see Borel~\cite{borel_seminar}, Atiyah--Bott~\cite{atiyah_bott}, Berline--Vergne~\cite{berline_vergne}, Guillemin--Sternberg~\cite{gs_supersymmetry}, Kirwan~\cite{kirwan}, and Ginzburg--Guillemin--Karshon~\cite[Appendix C]{ggk_book}.

Cartan's model for equivariant cohomology 
allows us to define equivariant cohomology using differential forms. 
The coefficient ring $H^*(BT)$ is the ring of symmetric polynomials 
on the Lie algebra $\t$ of $T$;
equivalently, the symmetric tensors on $\t^*$.
A splitting $T \cong (S^1)^m$ identifies it with $\R[x_1,\dots,x_{m}]$.
The equivariant differential forms of degree $k$ are given by
\begin{equation*}
\Omega_T^k(M) := \bigoplus_{j + 2l = k} (\Omega^j(M) \otimes S^l[\t^*])^T,
\end{equation*}
where $S^l[\t^*]$ is the space of symmetric polynomials of degree $l$ on $\t$.
The equivariant differential 
\begin{equation*}
(d_T \alpha)(\xi) := d(\alpha(\xi)) + \iota_{\xi_M}(\alpha(\xi))
\end{equation*}
makes $\Omega_T^*(M)$ into a cochain complex whose cohomology is $H_T^*(M)$.
In degree $k = 2$, by definition, we have
\begin{equation}\label{equation_omega2}
\Omega_T^2(M) = {(C^{\infty}(M) \otimes \t^*)}^T \oplus {(\Omega^2(M))}^T,
\end{equation}
and for every $\mu \in {(C^{\infty}(M) \otimes \t^*)}^T$ and $\beta \in {(\Omega^2(M))}^T$, the map $s \colon H_T^2(M) \rightarrow H^2(M)$ takes $[\mu+\beta]$ to $[\beta]$.
\begin{proof}[Proof of \cref{lem:ham_if_eqformal}]
A two-cochain $\mu \oplus \beta$ in $\Omega_T^2(M)$ is closed if and only if
\begin{equation*}
(d_T(\mu \oplus \beta))(\xi) = d(\beta(\xi)) + \iota_{\xi_M}(\beta(\xi)) + d(\mu(\xi)) + \iota_{\xi_M} (\mu(\xi)) = 0
\end{equation*}
for every $\xi \in \t$. The last term vanishes,
hence for any closed two-cochain $\mu \oplus \beta$ we have
\begin{equation}\label{eq:closed_eq}
d\beta = 0, \quad \iota_{\xi_M}\beta = - d(\mu(\xi))
\end{equation}
for every $\xi \in \t$.
By assumption, the action is equivariantly formal, and therefore the map
$$ s \colon H_T^2(M) \rightarrow H^2(M) $$
is surjective. In particular, $[\omega]$ has a preimage in $H_T^2(M)$,
represented by a closed cochain $\mu \oplus \beta$ of $\Omega_T^2(M)$.
In particular, $[\omega] = [\beta]$, thus there exists an invariant one-form 
$\eta$ such that
\begin{equation}\label{eq:omega_by_eta}
\omega = \beta + d\eta.
\end{equation}
We define a map $\Phi \colon M \rightarrow \t^*$ by
\begin{equation*}
\Phi(\xi) := \mu(\xi) + \iota_{\xi_M} \eta,
\end{equation*}
where $\xi \in \t$.
Then by~\eqref{eq:closed_eq} and Cartan's formula we have
\begin{equation*}
d(\Phi(\xi)) =  d(\mu(\xi)) + d(\iota_{\xi_M} \eta)  
    = - \iota_{\xi_M}\beta + \mathcal{L}_{\xi_M} \eta - \iota_{\xi_M} d\eta.
\end{equation*}
By the invariance of $\eta$, and by~\eqref{eq:omega_by_eta}, we then have
\begin{equation*}
d(\Phi(\xi)) = - \iota_{\xi_M}\beta - \iota_{\xi_M} d\eta 
 = - \iota_{\xi_M} \omega,
\end{equation*}
so the map $\Phi$ is a momentum map for $\omega$, 
and so the action is Hamiltonian.
\end{proof}

Combining \cref{lem:ham_if_eqformal} with a recent result of Bai--Pomerleano~\cite{bai_pomerleano} (which is proved using ``hard'' holomorphic curves techniques), we obtain:

\begin{corollary}\label{cor:ham_iff_eqformal}
Let a torus act faithfully and symplectically
on a closed connected symplectic manifold $(M, \omega)$.
Then the following are equivalent: 
\begin{enumerate}
\item \label{eq1} The action is equivariantly formal over $\mathbb{Z}$.
\item \label{eq2} The action is equivariantly formal over $\mathbb{R}$.
\item \label{eq3} The action is Hamiltonian.
\end{enumerate}
\end{corollary}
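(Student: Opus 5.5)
We prove the cycle of implications \eqref{eq1}$\Rightarrow$\eqref{eq2}$\Rightarrow$\eqref{eq3}$\Rightarrow$\eqref{eq1}.

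For \eqref{eq1}$\Rightarrow$\eqref{eq2}, suppose $H^*_T(M;\Z)$ is a free $H^*(BT;\Z)$-module. Then the restriction $s\colon H^*_T(M;\Z)\to H^*(M;\Z)$ is surjective. One can see this from the Serre spectral sequence of $M\to M_T\to BT$: freeness forces degeneration at $E_2$ with free, torsion-free terms. Alternatively, one can take freeness over $\Z$ in the sense of a Leray--Hirsch isomorphism $H^*_T(M;\Z)\cong H^*(BT;\Z)\otimes H^*(M;\Z)$. Tensoring with $\R$ then gives a Leray--Hirsch isomorphism over $\R$, because $H^*(BT;\Z)$ is a polynomial ring on degree-two generators and the universal coefficient theorem commutes with the tensor product here. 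So $H^*_T(M;\R)$ is free over $H^*(BT;\R)$, which is \eqref{eq2}. The only point needing care is which of the standard equivalent definitions of ``equivariantly formal over $\Z$'' is used. I would fix the Leray--Hirsch/surjectivity definition, so that this step is formal.

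The implication \eqref{eq2}$\Rightarrow$\eqref{eq3} is exactly \cref{lem:ham_if_eqformal}, proved above. Over $\R$, equivariant formality gives surjectivity of $s$ in degree two. Hence $[\omega]$ lifts to an equivariantly closed form $\mu\oplus\beta$, and $\mu+\iota_{\xi_M}\eta$ is then a momentum map.

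The substantial step is \eqref{eq3}$\Rightarrow$\eqref{eq1}. Here I would invoke Bai--Pomerleano~\cite{bai_pomerleano}: for a Hamiltonian torus action on a closed symplectic manifold, the integral equivariant cohomology is a free $H^*(BT;\Z)$-module. Classically, Atiyah--Bott/Kirwan Morse--Bott theory gives this only rationally, since integrally it can fail when the fixed components or the isotropy data carry torsion. Bai--Pomerleano establish it using holomorphic-curve methods, namely equivariant quantum cohomology and a Seidel-type argument. I would reduce to the circle case if their statement is phrased for $S^1$, by choosing a sufficiently generic subcircle and using that freeness over each factor of a splitting $T\cong(S^1)^m$ can be assembled. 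More simply, I would cite their theorem directly in the torus form.

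The main obstacle is therefore entirely external: we must make sure the hypotheses and the notion of ``equivariant formality over $\Z$'' in \cite{bai_pomerleano} match ours, namely faithful Hamiltonian $T$-actions on closed $M$ with freeness of $H_T^*(M;\Z)$ over $H^*(BT;\Z)$. Once that is checked, the three implications close the cycle.
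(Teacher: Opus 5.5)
Your cycle of implications and the justification for each step match the paper's proof exactly. The paper gets (1)$\Rightarrow$(2) by tensoring with $\R$, (2)$\Rightarrow$(3) from \cref{lem:ham_if_eqformal}, and (3)$\Rightarrow$(1) by citing Bai--Pomerleano~\cite{bai_pomerleano} directly in torus form. So your reduction to a generic subcircle is unnecessary, and ``assembling freeness over the factors of a splitting'' is not a standard fact over $\Z$ anyway.

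What needs correcting is your description of the cited input. Whatever its precise form, ``equivariantly formal over $\Z$'' cannot mean that $H^*_T(M;\Z)$ is a free $H^*(BT;\Z)$-module. As you yourself note, freeness forces $H^*(M;\Z)$ to be torsion-free. For a circle, multiplication by $u$ is then injective. The Gysin sequence of the circle bundle $ES^1\times M\to M_{S^1}$, whose Euler class is $u$ up to sign, then gives $H^*(M;\Z)\cong H^*_{S^1}(M;\Z)/u\,H^*_{S^1}(M;\Z)$, which is free abelian. But rotating the sphere factor of $N\times S^2$, with $N$ an Enriques surface, is a Hamiltonian circle action on a manifold with $2$-torsion in integral $H^2$. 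So under the freeness reading, (3)$\Rightarrow$(1) is false.

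You must therefore use surjectivity of $s\colon H^*_T(M;\Z)\to H^*(M;\Z)$, equivalently degeneration of the integral Serre spectral sequence at $E_2$. You mention this notion in your first paragraph, but your last paragraph switches to freeness. With the surjectivity notion, (1)$\Rightarrow$(2) follows from the universal coefficient theorem: $s_\R=s_\Z\otimes\R$, and over $\R$ surjectivity of $s$ is equivalent to freeness.
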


\begin{proof}
\eqref{eq1} implies~\eqref{eq2} by tensoring with $\mathbb{R}$.
\eqref{eq2} implies~\eqref{eq3} by \cref{lem:ham_if_eqformal}.
\eqref{eq3} implies~\eqref{eq1} by Bai--Pomerleano's \cite[Theorem 1.3]{bai_pomerleano}.
\end{proof}


\section{Higher-dimensional torus actions} \ 
\label{weak_main_theorem_appendix}

In this appendix, we prove the following weaker version of 
\cref{thm:main}, using the description of the fundamental group 
of symplectic torus actions with coisotropic orbits given by Duistermaat
and Pelayo~\cite[Corollary 8.3]{duistermaat_pelayo}.

\begin{theorem}\label{weak_main_theorem}
    Let $T$ act symplectically on a closed connected symplectic manifold $(M, \omega)$. Assume that $\dim T \ge \frac{1}{2}\dim M$. Then the action is Hamiltonian if and only if its orbits are contractible.
\end{theorem}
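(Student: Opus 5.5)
The forward direction is immediate: a Hamiltonian action on a closed manifold has fixed points, so all orbits are contractible by \cref{hamiltonian_has_contractible}. For the converse, I would assume the action is non-Hamiltonian and show that no orbit is contractible; by \cref{lem:contractible} it is enough to show that the orbit maps are not null-homotopic. I would split into two cases according to \cref{cor:sigma_constant}.

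\emph{Case 1: some orbit is non-isotropic.} Then \cref{non_isotropic_non_contractible_lemma} shows directly that every orbit is non-contractible.

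\emph{Case 2: all orbits are isotropic.} An isotropic orbit has dimension at most $n=\frac12\dim M$. If $\dim T>n$, the action is therefore not locally-free anywhere, so the generic stabilizer has positive dimension. This contradicts faithfulness together with density of free orbits (\cref{lem:contractible}). Hence $\dim T=n$, and the principal orbits are Lagrangian, in particular coisotropic. I would then invoke Duistermaat--Pelayo's classification of symplectic torus actions with coisotropic orbits. By \cite[Corollary 8.3]{duistermaat_pelayo} (see also Benoist \cite{benoist}), such an $M$ is an associated bundle over a torus of dimension $\dim(\t/\t_{\ham})$, with fiber a symplectic toric manifold carrying the $T_{\ham}$-action. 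Its fundamental group is described explicitly, and the orbits of the complementary torus $T_c$ map injectively onto a lattice in $\pi_1$ of the base torus. Non-Hamiltonian means $\t_{\ham}\neq\t$, so $T_c$ is nontrivial. Composing the orbit map $\rho_x$ with the projection $M\to$ base torus then gives a map $T\to T^{\dim T_c}$ that is nontrivial on $\pi_1$. Consequently $\rho_x$ is not null-homotopic.

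Alternatively, in Case 2 I could avoid the classification as follows. By the splitting of Benoist \cite[Lemma 6.7]{benoist} for coisotropic orbits, $T_c$ acts locally-freely and the cylinder-valued momentum map $\mu_c\colon M\to \t_c^*/P$ is a fibration over a torus. It then suffices to show that the $T_c$-orbits are non-torsion in the level sets. Here the level sets are $T_{\ham}$-toric manifolds times $T_c$, up to a finite cover, so the real Chern class of the $T_c$-action vanishes. \cref{non_free_torus_bundle_lemma} then applies, and the long exact sequence of the fibration over an aspherical base transfers non-contractibility to $M$, exactly as in the proof of \cref{complexity_one_torsion_theorem}.

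The main obstacle is extracting the precise $\pi_1$ statement from Duistermaat--Pelayo, namely showing that the image of $\pi_1(T_c)$ in $\pi_1(M)$ is nontrivial, rather than merely knowing the model. I expect it to follow from their normal form $M\cong G\times_H(\text{toric fiber})$, with $G$ a torus and $H$ discrete.
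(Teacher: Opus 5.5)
There is a genuine gap in your main argument for Case~2. Case~1 and the reduction of Case~2 to Lagrangian orbits are fine. The forward direction is also fine, though the reason is \cref{hamiltonian_has_contractible} via equivariant formality; fixed points alone do not make all orbits contractible, cf.\ \cref{example_fixed_non_contractible}.

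The failing step is detecting the $T_c$-orbits through a map from $M$ to a torus, which cannot work in general. The model you recall is not Duistermaat--Pelayo's. A bundle over a torus of dimension $f=\dim(\t/\t_{\ham})$ with a $2h$-dimensional toric fiber, $h=\dim\t_{\ham}$, has dimension $n+h<2n$. Also, with $G$ abelian and a simply connected fiber, $\pi_1(M)$ would be abelian, which already fails for the Kodaira--Thurston manifold with its $T^2$-action with Lagrangian orbits.

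The natural torus-valued map here, $\mu_c\colon M\to\t_c^*/P$, is $T$-invariant by \cref{lemma:isotropic_invariant}, so it collapses every orbit to a point. More fundamentally, any map $M\to T^k$ factors on $\pi_1$ through $H_1(M;\Z)$ modulo torsion. But in the description $\pi_1(M)\cong(\t_f)_\Z\times P$ with $(x',p')\cdot(x,p)=(x+x'-g(p,p'),p+p')$, every commutator lies in the orbit subgroup $(\t_f)_\Z\times\{0\}$, because the $P$-coordinate is additive.

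On Kodaira--Thurston, one orbit generator is a commutator; this is the free circle action with null-homologous orbits mentioned in the introduction. Worse, take the $6$-dimensional nilmanifold $\Gamma\backslash N$ with $[X_2,X_3]=Z_1$, $[X_3,X_1]=Z_2$, $[X_1,X_2]=-2Z_3$, and $\omega=\sum_i\alpha_i\wedge\theta_i$ in the dual left-invariant coframe. It is closed since $d\omega=(1+1-2)\,\alpha_1\wedge\alpha_2\wedge\alpha_3$. The central $T^3$ acts freely, with Lagrangian orbits, and purely non-Hamiltonianly: $\iota_{Z_i}\omega=-\alpha_i$, and no nonzero combination of the $\alpha_i$ is exact. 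Yet $[\Gamma,\Gamma]$ has finite index in the central lattice. So every orbit class is torsion in $H_1$, and every composite $T\to M\to T^k$ is null-homotopic.

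The paper instead reads the conclusion off $\pi_1(M)$ itself. By \cite[Proposition~8.2, Corollary~8.3]{duistermaat_pelayo}, the orbit classes of subcircles of $T_f$ form the subgroup $(\t_f)_\Z\times\{0\}$. Hence every subcircle of the nontrivial torus $T_f$ has non-contractible orbits.

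Your alternative route is closer to a proof, but as written it assumes its key step. The claim that the level sets of $\mu_c$ are, up to finite cover, $T_c$ times a toric manifold is exactly the nontrivial input. Equivalently, you need the real Chern class of the locally-free $T_c$-action on them to vanish, and local freeness from the splitting does not give this. In the paper this is \cref{chern_class_vanishes_proposition}, proved by flowing along the period directions and invoking the Karshon--Tolman isotopy theorem (\cref{complexity_one_isotopy}). To use it here you would restrict to a codimension-one subtorus containing your non-Hamiltonian circle. Alternatively, prove a complexity-zero analogue or compute the level sets in the Duistermaat--Pelayo model.

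You also need \cref{perturbation_lemma} first, so that $P$ is discrete and $\mu_c$ exists at all. With these supplied, the route is just the proof of \cref{complexity_one_torsion_theorem}, which already covers $\dim T\ge n-1$ with isotropic orbits, so Case~2 would follow by citing it. That is valid, but it makes the statement a corollary of the main theorem. The paper's appendix proof is instead independent of it, resting only on the Duistermaat--Pelayo description of $\pi_1(M)$.
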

\begin{proof}
    By \cref{hamiltonian_has_contractible}, if the action is Hamiltonian then its orbits are contractible. For the other direction, assume that the action is non-Hamiltonian. By \cref{non_isotropic_non_contractible_lemma}, if the action has a non-isotropic orbit, then the orbits are non-contractible. Hence, we assume that the orbits of the action are isotropic, and since $\dim T \ge \frac{1}{2}\dim M$, the orbits must be Lagrangian. We can therefore apply results from Duistermaat and Pelayo's equivariant classification of symplectic torus actions with coisotropic orbits (see~\cite{duistermaat_pelayo}).
    
    Let $P$ be the group of periods of the action, let $T_{\ham}$ be the maximal subtorus of $T$ which acts in a Hamiltonian fashion, and let $T_f$ be a complementary subtorus to $T_{\ham}$ in $T$. 
    Moreover, let $\t_f$ be the Lie algebra of $T_f$, and let ${(\t_f)}_{\mathbb{Z}}$ be its integral lattice. Then by~\cite[Proposition 8.2, Corollary 8.3]{duistermaat_pelayo}, there exists a function $g \colon P \times P \rightarrow {(\t_f)}_{\mathbb{Z}}$, such that the fundamental group of $M$ is isomorphic to ${(\t_f)}_{\mathbb{Z}} \times P$, with the multiplication rule given by
    \begin{equation*}
        (x', p') \cdot (x, p) = (x + x' - g(p, p'), p + p'),
    \end{equation*}
    for $(x, p)$ and $(x', p')$ in ${(\t_f)}_{\mathbb{Z}} \times P$. Moreover, the classes represented by the orbits of the subcircles of $T_f$ can be identified with the subgroup ${(\t_f)}_{\mathbb{Z}} \times \left\{0\right\}$. It follows that every subcircle action of $T_f$ has non-contractible orbits. Since we assumed that the action is non-Hamiltonian, then $T_f \subset T$ is non-trivial, so $T$ has a non-trivial subcircle action with non-contractible orbits, and therefore the $T$-orbits are non-contractible.
\end{proof}


\section{Seifert fibrations, and the proof in dimension four}
\label{seifert_lemma_appendix}

In this appendix, we give a direct proof of
\cref{thm:main} and \cref{complexity_one_torsion_theorem} 
in the special case when $M$ is four-dimensional. In this case, the level sets 
of the circle-valued Hamiltonian are Seifert fibrations, and the proof 
is significantly simpler.
This proof of \cref{thm:main} in dimension four is sketched in
Lalonde--McDuff--Polterovich~\cite{lalonde_mcduff_polterovich};
we give more details.

First, we recall some facts about Seifert manifolds. A \textbf{Seifert
manifold} is a smooth 3-dimensional $E$ which is given with a locally-free
circle action. Its orbit space is a 2-dimensional (effective quotient)
orbifold, and we denote it by $B$. From now on we will assume that both
$E$ and $B$ are orientable. The quotient map $E \rightarrow B$ is often
called a \textbf{Seifert fibration}. When the circle action is free,
then the orbit space $B$ is a smooth manifold, and the quotient map $E
\rightarrow B$ defines a principal $S^1$-bundle. In general, the quotient
map defines a principal $S^1$-orbibundle.

The orbit space $B$ is homeomorphic to an orientable surface, and has finitely 
many non-smooth points, which correspond to the non-free orbits of the circle 
action on $E$. The stabilizer of each non-free orbit is a finite non-trivial 
subgroup of the circle, and it acts faithfully on the normal to the orbit 
which is isomorphic to $\mathbb{R}^2$. Therefore, by the slice theorem, 
the isomorphism type of a neighbourhood of each non-free orbit is given by the 
cyclic stabilizer $\mathbb{Z}_a$, together with its faithful representation 
on $\mathbb{R}^2$, which is given by an integer $0 < b < a$ coprime to $a$. 
Here, the action of $\mathbb{Z}_a$ on $\mathbb{R}^2 \cong \mathbb{C}$ 
can be described by the action of the generator $e^{2\pi i / a}$, which acts 
by $e^{2\pi i / a} \cdot z = e^{2\pi b i / a} z$. Hence, we describe 
this isomorphism class by a pair of coprime natural integers $1 < a$ 
and $0 < b < a$. Seifert fibrations are then classified by their Seifert 
invariants given by
\begin{equation*}
    (b, g;(a_1, b_1),\dots,(a_k,b_k)),
\end{equation*}
where $(a_k, b_k)$ are the coprime pairs of the non-free orbits, $g$ is the 
genus of $B$, and $b$ is an additional integer that is needed for describing 
the fibration type. As in the smooth case, we can define an Euler class for 
the Seifert fibration, which we call the \textbf{Seifert Euler number}.
It is given by the formula
\begin{equation}\label{equation_euler_number}
    e(E \rightarrow B) = -b - \sum\frac{b_i}{a_i}.
\end{equation}
When the action is free, the sum vanishes, and $e = -b$ is the usual Euler number. Moreover, the Seifert Euler number agrees with the rational Chern number of the orbibundle, see for example~\cite[Section 1.2.1 and Remark 1.67]{orbifold_ham_s1}.

We will need the following lemma, which is a special case of \cref{non_free_torus_bundle_lemma} when the base space is of dimension two.
\begin{lemma}\label{seifert_lemma}
    Let $E \rightarrow B$ be Seifert fibration, with $E$ and $B$ closed connected and orientable, and with Seifert Euler number zero. Then the orbits of the circle action, and all of their iterates, are non-contractible in $E$.
\end{lemma}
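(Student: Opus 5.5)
The paper notes that \cref{seifert_lemma} is a special case of \cref{non_free_torus_bundle_lemma}, but in this appendix I would give a direct argument via the standard presentation of the fundamental group of a Seifert manifold. Throughout, $h$ denotes the class of a regular fiber. Let $E \to B$ have Seifert invariants $(b,g;(a_1,b_1),\dots,(a_k,b_k))$ with $B$ orientable of genus $g$. Then $\pi_1(E)$ has generators $\alpha_1,\beta_1,\dots,\alpha_g,\beta_g$, $q_1,\dots,q_k$ and $h$, with relations
\begin{equation}\label{fund_group}
h \text{ central}, \quad q_i^{a_i} h^{b_i} = 1, \quad \prod_{j=1}^g [\alpha_j,\beta_j]\, q_1\cdots q_k = h^{b}
\end{equation}
(up to sign conventions for $b$). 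In this presentation, a regular fiber represents $h$. A singular fiber with stabilizer $\mathbb{Z}_{a_i}$ represents an element $\gamma_i$ with $\gamma_i^{a_i} = h^{\pm1}$, up to conjugation. So it suffices to show that $h$ has infinite order in $\pi_1(E)$: then every fiber class and all of its iterates are nontrivial.

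To show that $h$ has infinite order, I would abelianize and tensor with $\mathbb{Q}$. Write additive generators $A_j, B_j, Q_i, H$. The relations become $a_i Q_i + b_i H = 0$ and $\sum_i Q_i - bH = 0$. Substituting $Q_i = -(b_i/a_i)H$ into the last relation gives
\begin{equation*}
\Big(-b - \sum_i \frac{b_i}{a_i}\Big) H = 0, \quad\text{that is,}\quad e(E \to B)\, H = 0 .
\end{equation*}
Since the Seifert Euler number vanishes by hypothesis, this relation is vacuous. Hence $H_1(E;\mathbb{Q}) \cong \mathbb{Q}^{2g} \oplus \mathbb{Q}H$, with $H \neq 0$. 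Therefore $h$ has infinite order in $H_1(E;\mathbb{Z})$, and so also in $\pi_1(E)$. The same holds for each $\gamma_i$, since $\gamma_i^{a_i}=h^{\pm1}$ and $H$ is free in the rational homology. This gives the claim for all orbits and all of their iterates.

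An alternative route, mirroring the proof of \cref{non_free_torus_bundle_lemma}, is to identify the Seifert Euler number with the rational Chern number, as cited from~\cite{orbifold_ham_s1}. Vanishing Euler number then gives a closed connection form $\tilde\alpha$, whose integral over each fiber is nonzero, namely $2\pi/|\text{stabilizer}|$. This already yields non-torsion in $H_1$.

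The main obstacle is bookkeeping rather than substance. One must fix sign conventions so that the coefficient obtained from the presentation matches exactly the Euler number as normalized in \eqref{equation_euler_number}. One must also confirm that the class of a singular fiber is a root of $h^{\pm1}$. For the latter I would use the local model $S^1\times_{\mathbb{Z}_a}D^2$, in which $a$ times the core circle is homotopic to a regular fiber.
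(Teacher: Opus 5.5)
Your proof is correct and is essentially the paper's argument: both start from Orlik's presentation of $\pi_1(E)$ and use $b=-\sum_i b_i/a_i$ to build an abelian quotient in which $h$ survives. The difference is that you map to $\mathbb{Q}$ (via $H_1(E;\mathbb{Q})$, i.e.\ $h\mapsto 1$, $q_i\mapsto -b_i/a_i$), which detects all powers $h^m$ at once, whereas the paper maps to $\mathbb{Z}_p$ for a prime $p$ larger than $m$ and all $a_i$, and its homomorphism is just the mod-$p$ reduction of yours; your observation that an exceptional fiber $\gamma_i$ satisfies $\gamma_i^{a_i}=h^{\pm1}$, so its iterates are also nontrivial, makes explicit a step the paper leaves implicit.
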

\begin{proof}
    Let $(b, g;(a_1, b_1),\dots,(a_k,b_k))$ be the Seifert invariants of $E \rightarrow B$. By assumption, the Seifert Euler number vanishes, and therefore from Equation~\eqref{equation_euler_number} we have
    \begin{equation}\label{b_equation_euler}
        b = -\sum_{1 \le i \le k}\frac{b_i}{a_i}.
    \end{equation}
    Let $x$ be some point in $E$. The fundamental group of $E$ at $x$ is given by
    \begin{equation}\label{fund_group}
        \pi_1(E, x) = \left\langle  A_1,B_1,\dots,A_g,B_g,q_1,\dots,q_k,h\ \middle|\ 
            \begin{array}{l}
            e = [A_i, h] = [B_i, h] = [q_i, h] \\
            e = q_j^{a_j}h^{b_j} \\
            h^b = q_1  q_2  \dots  q_r  [A_1,B_1] \dots [A_g, B_g]
            \end{array}
        \right\rangle,
    \end{equation}
    where $h$ is the class of the free orbits. See~\cite[Section 5.3]{orlik_book} for more information.

    Let $m \ge 1$ be some arbitrary positive integer. We wish to show that $h^m$ is non-trivial in $\pi_1(E, x)$. Let $p>0$ be a prime number, larger than every $a_i$, and larger than $m$. Let $\mathbb{Z}_p$ be the cyclic group of order $p$ and let $\gamma$ be a generator. We define a homomorphism $f \colon \pi_1(E, x) \rightarrow \mathbb{Z}_p$ by its values on the generators
    \begin{equation*}
       f(A_i) = e, \quad f(B_i) = e, \quad f(h) = \gamma, \quad f(q_j) = \gamma^{-\frac{b_j}{a_j}}.
    \end{equation*}
    Note that the root of order $a_j$ of the element $\gamma^{-b_j}$ is well-defined, since $a_j$ and $b_j$ and coprime to $p$. Moreover, note that $f$ is indeed a well-defined homomorphism because all of the relations defined in Equation~\eqref{fund_group} are satisfied, thanks to Equation~\eqref{b_equation_euler}. Since $h^m$ is mapped by $f$ to the non-trivial element $\gamma^m$, it must be non-trivial in $\pi_1(E, x)$, and this concludes the proof.
\end{proof}

Now, using \cref{seifert_lemma}, and lemmas from~\cite{classification_non_ham_s1}, we give a short proof of \cref{thm:main} and \cref{complexity_one_torsion_theorem} in dimension four. Note that in dimension four, we are interested in circle actions whose orbits are always isotropic.

\begin{proof}[Proof of \cref{thm:main} and \cref{complexity_one_torsion_theorem} in dimension four]
    If the action is Hamiltonian, then its orbits are contractible by \cref{hamiltonian_has_contractible}. Thus, we are left with proving the other direction, that the orbits of a non-Hamiltonian action, and all of their iterates, are non-contractible. By \cref{perturbation_lemma}, we can assume without loss of generality that $\omega$ is integral, and thus we have a circle-valued momentum map $\mu \colon M \rightarrow \mathbb{R}/P$, where $P$ is the group of periods of $\iota_X \omega$ given by
    \begin{equation*}
        P := \left\{\ \int_A \iota_X \omega \mid A \in H_1(M, \mathbb{Z})\right\},
    \end{equation*}
    where $X$ is the generating vector field of the circle action. Let $y$ be some value in $\mathbb{R}/P$. By~\cite[Lemma 2.2]{classification_non_ham_s1}, the level set $\mu^{-1}(y)$ is connected (see also \cref{connected_level_sets_lemma}). By~\cite[Proposition 1]{mcduff_circle_actions} the circle action has no fixed points, and thus the circle action on $\mu^{-1}(y)$ is locally-free (see also \cref{infinitesimal_splitting_lemma}). 
    Therefore, the level set $\mu^{-1}(y)$ is a closed connected Seifert manifold. Following~\cite[Lemma 2.4]{classification_non_ham_s1}, the Seifert Euler number of the Seifert fibration $\mu^{-1}(y) \rightarrow \mu^{-1}(y)/S^1$ must vanish since it is equal to the derivative of the Duistermaat-Heckman function $f_{\DuHe}\colon \mathbb{R}/P \rightarrow \mathbb{R}$, and this derivative is constant since there are no fixed points, and thus vanishes since the domain of $f_{\DuHe}$ is a circle $\mathbb{R}/P$ (see also \cref{chern_class_vanishes_proposition}). Hence, by \cref{seifert_lemma}, the orbits and their iterates are non-contractible in the Seifert manifold $\mu^{-1}(y)$ (see also \cref{non_free_torus_bundle_lemma}).

    Because there are no fixed points, the circle-valued momentum map $\mu$ is a submersion, and therefore, by Ehresmann's theorem, it defines a locally trivial fibration. Thus, we have the following long exact sequence of homotopy groups
    \begin{equation*}
        \cdots \rightarrow \pi_2(\mathbb{R}/P, y) \rightarrow \pi_1(\mu^{-1}(y), x) \rightarrow \pi_1(M, x) \rightarrow \cdots,
    \end{equation*}
    where $x$ is a point in the Seifert manifold $\mu^{-1}(y)$.
    Since $\mathbb{R}/P$ is homeomorphic to a circle, we have that $\pi_2(\mathbb{R}/P, y)$ vanishes, and so the map $\pi_1(\mu^{-1}(y), x) \rightarrow \pi_1(M, x)$ is injective. Therefore, a loop in $\mu^{-1}(y)$ is contractible in $M$ if and only if it is contractible in $\mu^{-1}(y)$, and this concludes the proof.
\end{proof}


\section{Perturbations of invariant symplectic forms}
\label{perturbation_appendix}

In this appendix, we describe how to perturb an invariant symplectic
form to be rational, while preserving some of the properties of the
action with respect to the symplectic form.

\begin{lemma}\label{perturbation_lemma}
Let $T$ act symplectically on a closed connected symplectic manifold 
$(M, \omega)$, with isotropic orbits. Let $T_H$ be a subtorus that acts
in a Hamiltonian fashion, 
and let $C_1,\dots,C_k \subset T$ 
be subcircles that act in a non-Hamiltonian fashion. Then there exists 
an integral $T$-invariant symplectic form $\omega'$ on $M$, such that
\begin{enumerate}
\item the orbits are isotropic with respect to $\omega'$,
\item $T_H$ acts in a Hamiltonian fashion with respect to $\omega'$, and
\item each $C_j$ acts in a non-Hamiltonian fashion with respect to $\omega'$.
\end{enumerate} 
\end{lemma}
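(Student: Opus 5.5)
The plan is to perturb $\omega$ inside the space of closed $T$-invariant two-forms that are ``basic-like'' with respect to the action, and then to rescale. Let $\mathcal{Z}$ be the finite-dimensional real vector space of cohomology classes in $H^2(M;\R)$ that are represented by $T$-invariant closed two-forms $\beta$ with two properties. First, $\beta$ vanishes on the orbits, meaning $\beta(\xi_M,\eta_M)=0$ for all $\xi,\eta\in\t$; by \cref{lem:independent_form} this is a constant bilinear form, so it is a linear condition. Second, $\iota_{\xi_M}\beta$ is exact for every $\xi\in\t_H$. Averaging over $T$ shows that every class has invariant representatives. A short computation then shows that both conditions depend only on the class $[\beta]$, since for invariant $\eta$ we have $\iota_{\xi_M}d\eta=-d(\iota_{\xi_M}\eta)$ and $d\eta(\xi_M,\zeta_M)=0$. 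Concretely, define the linear maps
$$\Phi\colon H^2(M;\R)\to \Lambda^2\t^*\oplus \on{Hom}(\t_H,H^1(M;\R)),\qquad [\beta]\mapsto\bigl(\beta(\cdot_M,\cdot_M),\ \xi\mapsto[\iota_{\xi_M}\beta]\bigr).$$
These maps are defined over $\Q$ relative to the rational structures $H^2(M;\Q)$, $H^1(M;\Q)$ and the lattice $\t_{\Z}$. For the first component this holds because $\beta(\xi_M,\eta_M)=\langle[\beta],\rho_{x*}[\text{2-torus}]\rangle$ for $\xi,\eta\in\t_\Z$. For the second it holds because $[\iota_{\xi_M}\beta]$ pairs with $\gamma\in H_1(M;\Z)$ as $\langle[\beta],\gamma\times\text{orbit of }\xi\rangle$, the pairing of $[\beta]$ with the integral 2-cycle swept by the circle $\exp(\R\xi)$ along $\gamma$. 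Hence $\mathcal{Z}=\ker\Phi$ is a rational subspace, and its rational points are dense in it. By hypothesis $[\omega]\in\mathcal{Z}$.

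The next step is to choose a rational class $a\in\mathcal{Z}\cap H^2(M;\Q)$ very close to $[\omega]$, and write $a-[\omega]=[\beta]$ with $\beta$ a small $T$-invariant closed form. Choose $\beta$ by fixing a basis of $\mathcal{Z}$ together with invariant representatives, and expand in that basis, so that small classes give $C^0$-small forms. Then, since $[\beta]\in\mathcal{Z}$, the form $\beta$ vanishes on orbits exactly (a pointwise condition, constant by \cref{lem:independent_form}), and $\iota_{\xi_M}\beta$ is exact for $\xi\in\t_H$. Set $\omega''=\omega+\beta$. For $\beta$ small, $\omega''$ is nondegenerate because $M$ is compact. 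It is invariant, its orbits are isotropic, $T_H$ remains Hamiltonian since $\iota_{\xi_M}\omega''=\iota_{\xi_M}\omega+\iota_{\xi_M}\beta$ is a sum of exact forms, and $[\omega'']=a$ is rational. Finally, take $\omega'=N\omega''$ for a suitable integer $N$; this makes the class integral and preserves all three properties.

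It remains to keep each $C_j$ non-Hamiltonian. For a generator $\xi_j\in\t_\Z$ of $C_j$, consider the linear map $L_j\colon\mathcal{Z}\to H^1(M;\R)$, $[\beta]\mapsto[\iota_{\xi_{j,M}}\beta]$. It is well defined by the same computation as above, rational, and $L_j([\omega])\neq0$. The set $\{L_j\neq0\}$ is open in $\mathcal{Z}$, so any $a$ sufficiently close to $[\omega]$ satisfies $L_j(a)\neq0$ for all $j$. Rescaling by $N$ does not affect this. I expect the main obstacle to be checking carefully that the constraints cutting out $\mathcal{Z}$ are genuinely rational. That requires identifying $\beta(\xi_M,\eta_M)$ and the periods of $\iota_{\xi_M}\beta$ with pairings of $[\beta]$ against integral homology classes swept out by the orbit maps, as sketched above. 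Once that is in place, density of rational points in a rational subspace finishes the argument.
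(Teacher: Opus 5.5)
Your proposal is correct and takes essentially the same route as the paper. The paper cuts out the same rational subspace $W_\R$ as the intersection of the kernels of the pullbacks $i^*$ (along an orbit map $T \to M$) and $f_{C,\gamma}^*$ (along the tori $C \times S^1 \to M$ swept out by subcircles $C \subset T_H$ along loops $\gamma$), which are exactly the integral-pairing descriptions you give for the components of $\Phi$. It then perturbs $\omega$ using invariant representatives of a rational basis of $W_\Q$ to reach a nearby rational class, keeps each $C_j$ non-Hamiltonian by openness, and rescales to make the class integral.
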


\begin{proof}
Denote by $H_2(M)$ the singular homology (over $\Z$),
by $H^2_{\dR}(M)$ the de Rham cohomology,
and by $H^2_{\dR}(M)_\Q$ the image of the natural inclusion map
$$H^2(M;\Q) \to H^2_{\dR}(M).$$
Then $H^2_{\dR}(M)_{\Q}$ is a vector space over $\Q$,
it consists of those classes $[\beta]$
whose evaluation on every element of $H_2(M)$ is in $\Q$,
and $\on{span}_{\R} H^2_{\dR}(M)_{\Q} = H^2_{\dR}(M)$.

Let $$i^* \colon H^2_{\dR}(M) \to H^2_{\dR}(T)$$ 
be the map on de Rham cohomology that is induced by the orbit map 
$\rho_x \colon a \mapsto a \cdot x$ for some $x \in M$.  
This map is independent of the choice of~$x$, 
since every two orbit maps are homotopic.  
For any invariant closed two-form $\beta$ on $M$,
the $T$-orbits in $M$ are isotropic with respect to $\beta$ 
if and only if $i^*[\beta]=0$.

For each subcircle $C$ of $T$ and closed curve $\gamma \colon [0,1] \to M$
in $M$, define $f_{C,\gamma} \colon C \times S^1 \to M$ by 
$(a,e^{2\pi i t}) \mapsto a \cdot \gamma(t)$, 
and let 
$$f_{C,\gamma}^* \colon H^2_{\dR}(M) \to H^2_{\dR}(C \times S^1)$$ 
be the induced map on de Rham cohomology.
This map is independent of the choice of $\gamma$ in its homology class
$[\gamma] \in H_1(M)$.
For any closed invariant two-form $\beta$ on $M$,
the action of $C$ on $M$ admits a momentum map with respect to $\beta$ 
if and only if $f_{C,\gamma}^*[\beta]=0$ for all $[\gamma] \in H_1(M)$.

Thus, 
$$[\omega] \in W_\R := 
 \big(\ker i^*\big) \cap 
\Big( \bigcap_{\substack{C \subset T_H \\ [\gamma] \in H_1(M)}} 
 \ker f_{C,\gamma}^* \Big),$$
and for each $C_j$, we can choose $[\gamma_j] \in H_1(M)$
such that $f_{C_j,\gamma_j}^*[\omega] \neq 0$.

Because each of the maps $i^*$ and $f_{C,\gamma}^*$ 
can also be defined on (singular) cohomology over $\Q$,
the $\Q$ vector space 
$$W_\Q := W_\R \cap H^2_{\dR}(M)_{\Q}$$
satisfies $\on{span}_{\R} W_\Q = W_\R$.
Let $B_1,\ldots,B_m$ be a basis to $W_\Q$ (over $\Q$).
For each $j$, let $\beta_j$ be a closed differential 2-form on $M$
such that $B_j = [\beta_j]$.
After averaging each $\beta_j$ by the $T$-action,
we can assume that the forms $\beta_j$ are $T$-invariant.
The map $(t_1,\ldots,t_m) \mapsto \sum t_j B_j$
is an isomorphism of real vector spaces from $\R^m$ to $W_\R$, 
and the map $(t_1,\ldots,t_m) \mapsto [\omega] + \sum t_j B_j$
is an isomorphism of \emph{affine} vector spaces from $\R^m$ to $W_\R$.

Because $M$ is compact, the set of $\tau = (t_1,\ldots,t_m)$ such that 
$\omega_\tau := \omega + \sum t_j \beta_j$ is non-degenerate 
and such that $f_{C_j,\gamma_j}^* [\omega_\tau] \neq 0$
for all $j \in \{1,\ldots,k\}$ is open in $W_\R$. 
Because this open set contains $\tau=0$, it is nonempty,
so it meets the dense subset $W_\Q$ of $W_\R$,
say, in $\omega_\tau$.  
Then $\omega_\tau$ is non-degenerate, 
the $C_j$ action on $M$ is non-Hamiltonian with respect to $\omega_\tau$
for all $j \in \{1,\ldots,k\}$, and $[\omega_\tau] \in W_\Q$.
And since $\omega_\tau$ is closed and $T$-invariant
and $[\omega_\tau] \in W_\R$,
the $T$-orbits are isotropic with respect to $\omega_\tau$
and the $T_H$ action is Hamiltonian with respect to $\omega_\tau$.
By rescaling $\omega_{\tau}$, we get an integral two-form as required. 
\end{proof}

\printbibliography

\end{document}